\newtheorem{thm}{Theorem}[subsection]
\newtheorem{dfn}[thm]{Definition}
\newtheorem{lemma}[thm]{Lemma}
\newtheorem{prop}[thm]{Proposition}
\newtheorem{cor}[thm]{Corollary}
\newtheorem{que}{Question}
\newtheorem{conj}[que]{Conjecture}
\newtheorem{THM}{Theorem}
\theoremstyle{remark}
\newtheorem{ex}[thm]{Example}
\newtheorem{rem}[thm]{Remark}
\newcommand{\mb}{\mathbb}
\newcommand{\mc}{\mathcal}
\newcommand{\R}{\mb R}
\newcommand{\C}{\mb C}
\newcommand{\Pj}{\mb P}
\newcommand{\Z}{\mb Z}
\newcommand{\Q}{\mb Q}
\newcommand{\N}{\mb N}
\newcommand{\inv}{^{-1}}
\newcommand{\rato}{\dashrightarrow}
\newcommand{\norm}[1]{\| #1 \|}
\newcommand{\Exterior}{\mathchoice{{\textstyle\bigwedge}}%
    {{\bigwedge}}%
    {{\textstyle\wedge}}%
    {{\scriptstyle\wedge}}}
\DeclareMathOperator{\Aut}{Aut}
\DeclareMathOperator{\id}{id}
\DeclareMathOperator{\GL}{GL}
\DeclareMathOperator{\SL}{SL}
\DeclareMathOperator{\Gal}{Gal}
\numberwithin{equation}{section}
\numberwithin{equation}{section}       
\begin{document}

\title[Cohomology action of threefold automorphisms]{On the cohomological action of automorphisms of compact K\"ahler threefolds}

\date{}
\author{Federico Lo Bianco}
\address{Institut de Math\'ematiques de Marseille, University of Aix-Marseille,\\ Centre de Math\'ematiques et Informatique (CMI) \\
Technop\^ole Ch\^ateau-Gombert
, Marseille (France)}
\email{federico.lobianco3@gmail.com}

\maketitle

\texttt{AMS classification numbers: 14J30 (3-folds), 14J50 (Automorphisms of surfaces and higher-dimensional varieties), 32Q15 (K\"ahler manifolds).}

\begin{abstract}
Extending well-known results on surfaces, we give bounds on the cohomological action of automorphisms of compact K\"ahler threefolds. More precisely, if the action is virtually unipotent we prove that the norm of $(f^n)^*$ grows at most as $cn^4$; in the general case, we give a description of the spectrum of $f^*$, and bounds on the possible conjugates over $\Q$ of the dynamical degrees $\lambda_1(f),\lambda_2(f)$. Examples on compact complex tori show the optimality of the results.
\end{abstract}

\begin{otherlanguage}{french}
\begin{abstract}
Nous étendons des résultats bien connus sur l'action en cohomologie des automorphismes des surfaces aux variétés compactes K\"ahler de dimension $3$. Plus précisément, si l'action est virtuellement unipotente nous montrons que la norme de $(f^n)^*$ cro\^it au plus comme $cn^4$; dans le cas général, nous donnons une description du spectre de $f^*$ et des bornes sur les possibles conjugués sur $\Q$ des degrés dynamiques $\lambda_1(f), \lambda_2(f)$. Des exemples sur les tores complexes compacts montrent l'optimalité de ces résultats.
\end{abstract}
\end{otherlanguage}

An automorphism $f\colon X\to X$ of a compact K\"ahler manifold induces by pull-back of forms a linear automorphism
$$f^*\colon H^*(X,\Z)\to H^*(X,\Z)$$
which preserves the cohomology graduation, the Hodge decomposition, complex conjugation, wedge product and Poincar\'e duality. 
\begin{que}
What else can one say on $f^*$? More precisely, can one give constraints on $f^*$ which depend only on the dimension of $X$ (and not on the dimension of $H^*(X)$)?
\end{que}
This is an interesting question in its own right since the cohomology of a manifold is a powerful tool to describe its geometry; furthermore, the cohomological action of an automorphism is relevant when studying its dynamics: one can deduce its topological entropy from its spectrum (see Theorem \ref{yomdin-gromov}), and in the surface case knowing $f^*$ allows to establish the existence of $f$-equivariant fibrations (see Theorem \ref{thm:Gizatullin}).
It turns out that the restriction of $f^*$ to the even cohomology encodes most of the interesting informations (see Section \ref{sec: dyn deg}), therefore we focus on this part of the action; furthermore, in dimension $3$ the action on $H^0(X)$ and on $H^6(X)$ is trivial, and the action on $H^4(X)$ can be deduced from the action on $H^2(X)$ (see Proposition \ref{first constraints}.(3)), so we only describe the latter.

The situation of automorphisms (and, more generally, of birational transformations) of curves and surfaces is well understood (see Section \ref{sec:surface}).
We address here the three-dimensional case.

The first result describes the situation where $f^*$ does not have any eigenvalue of modulus $>1$, i.e. the dynamical degrees $\lambda_i(f)$ are equal to $1$ (see Definition \ref{def:dyn deg}).

\begin{THM}
\label{thm:unipotent}
Let $X$ be a compact K\"ahler threefold and let $f\colon X\to X$ be an automorphism such that $\lambda_1(f)=1$ and whose action on $H^*(X)$ has infinite order. Then the induced linear automorphism $f_2^*\colon H^2(X,\C)\to H^2(X,\C)$ is virtually unipotent and has a unique Jordan block of maximal dimension $m=3$ or $5$.
In particular, the norm of $(f^n)^*$ grows either as $cn^2$ or as $cn^4$ as $n$ goes to infinity.
\end{THM}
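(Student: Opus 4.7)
The strategy is to show first that $f^*$ on $H^2(X,\mathbb{C})$ is virtually unipotent, then localise all nontrivial Jordan structure in $H^{1,1}(X,\mathbb{R})$, and finally exploit the $f^*$-invariance of the K\"ahler cone and of the triple intersection form to constrain the Jordan sizes. The main obstacle will be the final step, where the interaction between the weight filtration of the nilpotent part of $f^*$ and the positivity coming from the cubic form must be made sharp.

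First, for virtual unipotence, the hypothesis $\lambda_1(f)=1$ combined with the log-concavity of dynamical degrees ($\lambda_1^2\geq\lambda_0\lambda_2=\lambda_2$) forces $\lambda_2(f)=1$ as well, and Poincar\'e duality between $H^{1,1}$ and $H^{2,2}$ then makes every eigenvalue of $f^*$ on $H^{1,1}(X,\mathbb{R})$ of modulus exactly one. For an eigenvalue $\mu$ of $f^*$ on $H^{2,0}$ with eigenvector $\alpha$, the pseudoeffective class $\alpha\wedge\bar\alpha\in H^{2,2}$ satisfies $(f^n)^*(\alpha\wedge\bar\alpha)=|\mu|^{2n}\alpha\wedge\bar\alpha$; pairing against a K\"ahler class $\omega$ and using $\int(f^n)^*\eta\cdot\omega=\int\eta\cdot(f^{-n})^*\omega$, together with the symmetric bound for $f^{-1}$, gives $|\mu|=1$. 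All eigenvalues of $f^*$ on $H^2(X,\mathbb{C})$ thus lie on the unit circle, and being roots of the integer characteristic polynomial of $f^*|_{H^2(X,\mathbb{Z})}$ they are roots of unity by Kronecker's theorem. Replacing $f$ by a suitable iterate, I may assume $f^*$ is unipotent on $H^2$; set $N=\log f^*$.

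Next, $f^*$ preserves the Hodge decomposition, so Jordan blocks split between $H^{2,0}$, $H^{1,1}$ and $H^{0,2}$. On $H^{2,0}$ the Hermitian form $\alpha\mapsto\int\alpha\wedge\bar\alpha\wedge\omega$ is positive definite, and the polynomial boundedness of both $(f^n)^*\omega$ and $(f^{-n})^*\omega$ (from $\lambda_1(f)=\lambda_1(f^{-1})=1$) keeps $\|(f^n)^*\alpha\|$ bounded for every $\alpha\in H^{2,0}$; a unipotent operator with a bounded orbit is trivial, so $f^*|_{H^{2,0}}=\id$ and likewise on $H^{0,2}$. Hence every nontrivial Jordan block of $N$ on $H^2$ lies in $H^{1,1}(X,\mathbb{R})$. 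Let $m$ be the size of the largest block of $N|_{H^{1,1}}$. A generic K\"ahler class $v$ satisfies $(f^n)^*v/\binom{n}{m-1}\to c\,v_0$ with $c\neq0$, where $v_0$ is the top vector of a maximal chain, so $\pm v_0\in\overline{\mathcal K}$; the analogous computation for $f^{-1}$ carries a sign $(-1)^{m-1}$, forcing $(-1)^{m-1}(\pm v_0)\in\overline{\mathcal K}$ as well. Salience of the K\"ahler cone then forces $m$ to be odd.

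The remaining and hardest point is the upper bound $m\leq 5$ together with the uniqueness of the maximal block. Here I would use that $N$ is a derivation of the symmetric trilinear form $\phi(\alpha,\beta,\gamma)=\int_X\alpha\cup\beta\cup\gamma$ on $H^{1,1}$: combined with the weight filtration of $N$, an assumption $m\geq 7$ would produce a nef class $v_0$ with $v_0^3=0$ and enough further vanishings of $\phi(v_0,\cdot,\cdot)$ to contradict the Hodge index theorem (equivalently, the Khovanskii--Teissier inequalities applied to a perturbation $v_0+\varepsilon\omega$). A parallel argument, producing a second independent fixed class $w_0\in\overline{\mathcal K}$ from a hypothetical second block of size $m$ and analysing the restrictions of $\phi(v_0,w_0,\cdot)$ to the weight filtration, gives uniqueness. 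The infinite-order hypothesis excludes $m=1$ and parity excludes $m=2,4$, leaving $m\in\{3,5\}$; the asymptotic $(f^n)^*=\sum_k\binom{n}{k}N^k$ then yields $\|(f^n)^*\|\sim c\,n^{m-1}$, giving $cn^2$ or $cn^4$.
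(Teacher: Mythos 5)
Your reduction to virtual unipotence (log-concavity of dynamical degrees plus Kronecker's lemma) matches the paper's Corollary \ref{only lambda_1=1}, and the parity argument via the K\"ahler cone is a reasonable substitute for the paper's appeal to Birkhoff's theorem on cone-preserving operators. But there are two significant problems with the rest.

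First, the claim that $f^*|_{H^{2,0}}=\mathrm{id}$ is simply false, and the argument offered for it doesn't establish boundedness of the orbits. Pairing $(f^n)^*\alpha\wedge\overline{(f^n)^*\alpha}$ against $\omega$ gives $\int \alpha\wedge\bar\alpha\wedge (f^{-n})^*\omega$, and polynomial boundedness of $(f^{-n})^*\omega$ only gives \emph{polynomial} growth of the Hermitian norm, not a bound. Indeed the paper's Example in \textsection\ref{unipotent examples} with
$M=\begin{pmatrix}1&0&0\\1&1&0\\0&1&1\end{pmatrix}$
produces a torus automorphism whose action on $H^{2,0}$ is a size-$3$ Jordan block, so $f^*$ is genuinely non-trivial there. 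What one \emph{can} say is that the \emph{maximal} block cannot sit in $H^{2,0}\oplus H^{0,2}$, but this requires the correct mechanism: for an eigenvector $u_1\in H^{2,0}$ of a size-$k$ chain, $u_1\wedge\bar u_1\neq 0$ by the generalized Hodge index theorem, whence $g^n(u_k\wedge\bar u_k)$ grows like $n^{2k-2}$ in $H^4$, and Poincar\'e duality forces a block of size $\geq 2k-1$ somewhere in $H^2$. Your boundedness route skips exactly the step that does the work.

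Second, the heart of the theorem — the bound $m\leq 5$ and the uniqueness of the maximal block — is only a plan, not a proof. The phrase ``would produce a nef class $v_0$ with $v_0^3=0$ and enough further vanishings \ldots to contradict the Hodge index theorem'' does not say what vanishings are available or why they contradict anything. The paper's actual argument is delicate precisely at this point: one takes a Jordan chain $u_1,\ldots,u_k$ in $H^{1,1}\cup(H^{2,0}\oplus H^{0,2})_\R$, observes from the Hodge index theorem that $u_1\wedge u_1$, $u_1\wedge u_2$, $u_2\wedge u_2$ cannot all vanish, and then has to handle the case $u_1\wedge u_1=u_1\wedge u_2=0$ by comparing the \emph{two} asymptotic expansions of $g^n(u_k\wedge u_k)$ and $g^n(u_{k-1}\wedge u_{k-1})$: the two resulting linear relations on $u_1\wedge u_3$ and $u_2\wedge u_2$ are independent and $u_2\wedge u_2\neq 0$, so at least one forces growth of order $n^{2k-6}$ in $H^4$, hence $2k-6\leq k-1$ by Poincar\'e duality, i.e.\ $k\leq 5$. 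Uniqueness (Proposition \ref{non-maximal Jordan blocks}) is a separate argument of the same flavour applied to two distinct chains. Also note your threshold ``$m\geq 7$'' is off: the counting in the paper already excludes $m=6$ without invoking parity.

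In short: your overall architecture (Hodge index on $H^2$, comparison of $f^*_4$ with $f^*_2$ via duality) is the right one, but the $H^{2,0}$ reduction as stated is wrong and contradicted by the paper's own examples, and the core quantitative step is missing.
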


For the proof of slightly more general results, see Theorem \ref{thm : unipotent case} and Proposition \ref{non-maximal Jordan blocks}.

Next we give a description of the spectrum of $f^*$ in terms of the dynamical degrees:

\begin{THM}
\label{thm:moduli eigenvalues}
Let $X$ be a compact K\"ahler threefold and let $f\colon X\to X$ be an automorphism having dynamical degrees $\lambda_1=\lambda_1(f)$ and $\lambda_2=\lambda_2(f)$ (see Definition \ref{def:dyn deg}). Let $\lambda$ be an eigenvalue of $f^*_2\colon H^2(X,\C)\to H^2(X,\C)$; then there exists a positive integer $N$ such that
$$|\lambda|^{(-2)^N}\in \left\{1, \lambda_1, \lambda_2\inv, \lambda_1\inv\lambda_2 \right\}.$$
Furthermore, if $N\geq 1$ then $|\lambda|^{(-2)^k}$ is an eigenvalue of $f^*_2$ for all $k=1,\ldots, N$.
\end{THM}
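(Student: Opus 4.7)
Set $S:=\{1,\lambda_1,\lambda_2\inv,\lambda_1\inv\lambda_2\}$. My plan is to establish the following key step: if $\lambda$ is an eigenvalue of $f_2^*$ with $|\lambda|\notin S$, then $|\lambda|^{-2}$ is also an eigenvalue of $f_2^*$. Granted this, starting from $\lambda$ one obtains a sequence of eigenvalue moduli $|\lambda|,\,|\lambda|^{-2},\,|\lambda|^{4},\dots$; since $f_2^*$ has finitely many eigenvalues, the sequence must meet $S$ at some finite stage---otherwise a repetition $|\lambda|^{(-2)^a}=|\lambda|^{(-2)^b}$ for some $a<b$ would force $|\lambda|=1\in S$, contradicting that the intermediate moduli avoid $S$. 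The secondary statement (that all intermediate $|\lambda|^{(-2)^k}$ are eigenvalues) is then automatic from the iteration.

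\textbf{Main case analysis.} Let $v\in H^2(X,\C)$ satisfy $f^*v=\lambda v$ and split $v=v^{2,0}+v^{1,1}+v^{0,2}$; since $f^*$ preserves the Hodge decomposition, each nonzero component is itself an eigenvector for $\lambda$. Two tools repeatedly produce $|\lambda|^{-2}$ as an eigenvalue. First, \emph{Poincaré duality}: the $f^*$-equivariant pairing $H^2\times H^4\to H^6\cong\C$, together with the triviality of $f^*|_{H^6}$, implies that the eigenvalues of $f^*$ on $H^4$ are exactly the reciprocals of those on $H^2$. Second, \emph{Hodge positivity}: for a nonzero holomorphic $2$-form $\sigma$, the class $[\sigma\wedge\bar\sigma]\in H^{2,2}$ is nonzero, since $i\,\sigma\wedge\bar\sigma\wedge\omega$ is pointwise positive for any Kähler class $\omega$. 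If $v^{2,0}\neq 0$ (symmetrically $v^{0,2}\neq 0$), then setting $\sigma=v^{2,0}$ yields via positivity a nonzero eigenclass in $H^{2,2}$ with eigenvalue $|\lambda|^2$, and Poincaré duality produces $|\lambda|^{-2}$ on $H^{1,1}\subset H^2$. When $v=\alpha\in H^{1,1}$, the same two-step argument works as soon as $[\alpha^2]\neq 0$ or $[\alpha\wedge\bar\alpha]\neq 0$ in $H^{2,2}$, since these give $\lambda^2$, resp.\ $|\lambda|^2$, as an eigenvalue on $H^{2,2}$.

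\textbf{The obstacle: the doubly isotropic case.} The remaining case is $v=\alpha\in H^{1,1}$ with both $[\alpha^2]=0$ and $[\alpha\wedge\bar\alpha]=0$ in $H^{2,2}$; here I must conclude directly that $|\lambda|\in S$, and this is where the technical core of the proof lies. My plan is to exploit Perron--Frobenius for the actions of $f^*$ and $(f^{-1})^*$ on the closed Kähler cone $\overline K\subset H^{1,1}(X,\R)$: both maps preserve $\overline K$ and admit nonnegative eigenvectors $\omega_+,\omega_-\in\overline K$ with eigenvalues $\lambda_1$ and $\lambda_2\inv$ respectively. Cup products such as $\alpha\wedge\omega_+$ and $\alpha\wedge\omega_-$ lie in $H^{2,2}$ with eigenvalues $\lambda\lambda_1$ and $\lambda\lambda_2\inv$; when nonzero, Poincaré duality forces relations of the form $|\lambda|\cdot\lambda_1\in\{\text{moduli on }H^{2,2}\}$. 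Combined with Hodge--Riemann bilinear relations for primitive classes and the identities $\omega_+^3=\omega_-^3=0$ (forced by $f^*|_{H^{3,3}}=\id$ when $\lambda_1$ or $\lambda_2>1$), these constraints should corner $|\lambda|$ into the four values of $S$---note that $\lambda_1\inv\lambda_2$ arises naturally as the ratio of Perron eigenvalues of $(f^{-1})^*$ and $f^*$. The main difficulty I anticipate is ruling out spurious intermediate moduli by carefully tracking which of these cup products vanish, and translating the resulting numerical identities into exactly the four admissible values.
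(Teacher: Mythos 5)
Your reduction step (iterate $|\lambda|\mapsto|\lambda|^{-2}$ until landing in $S$) correctly captures the shape of the theorem, and your use of Poincar\'e duality and the Hodge--Riemann positivity to produce eigenvalues of $f_4^*$ is sound. But there are two substantive gaps that, taken together, mean the proof is not complete and diverges from the paper in a way that leaves the hard part untouched.

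First, a small but real slip: in the case $v=\alpha\in H^{1,1}$ with $[\alpha^2]\neq 0$ you get $\lambda^2$ as an eigenvalue of $f_4^*$, hence $\lambda^{-2}$ as an eigenvalue of $f_2^*$. This is \emph{not} the same as $|\lambda|^{-2}$ when $\lambda$ is not real (it only has the right modulus). Since your reduction needs the genuine equality ``$|\lambda|^{-2}$ is an eigenvalue'' (both to iterate and to deliver the ``Furthermore'' clause), the branch $[\alpha^2]\neq 0,\ [\alpha\wedge\bar\alpha]=0$ does not actually close; you are left with the weaker statement $[\alpha\wedge\bar\alpha]\neq 0$ as the only useful nondegeneracy condition on a single eigenvector.

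Second, and this is the core gap, your treatment of the ``doubly isotropic'' case is a plan, not a proof, and I don't think it can be pushed through without a qualitatively new ingredient. The point the paper exploits is precisely that single-vector arguments hit a wall: an isolated eigenvector $\alpha\in H^{1,1}$ can perfectly well satisfy $\alpha\wedge\bar\alpha=0$. The paper's key device (Lemma \ref{key lemma}) instead compares \emph{pairs} of distinct eigenvectors $v,v'$ picked in $H^{2,0}\cup H^{1,1}\cup H^{0,2}$: the generalized Hodge index theorem (Theorem \ref{hodge index}, Corollary \ref{non-null wedge}) rules out a two-dimensional $q$-isotropic subspace, so $v\wedge\bar v,\ v\wedge\bar v',\ v'\wedge\bar v'$ cannot all vanish. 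This yields at least one of $|\lambda|^{-2},\ (\lambda\bar\lambda')^{-1},\ |\lambda'|^{-2}$ as an eigenvalue, and the remaining work is to choose $\lambda'$ cleverly so that the last two options are excluded. That choice is made via the weight formalism: the algebraic group $G=\overline{\langle g_2\rangle}^{\mathrm{Zar}}$ has split rank $r\le 2$ (Lemma \ref{bound split rank}), the moduli of eigenvalues are encoded as integer vectors in a rank-$r$ lattice, and maximality of the weights $w_1,w_2,w_3$ (of $\lambda_1,\lambda_2^{-1},\lambda_1^{-1}\lambda_2$) lets one eliminate the two unwanted alternatives. Your Perron--Frobenius plan (using cone-positive classes $\omega_\pm$ and vanishing patterns of products like $\alpha\wedge\omega_\pm$) does not naturally deliver $|\lambda|^{-2}$, nor does it see the lattice structure that pins down the exceptional set $S$; you yourself acknowledge the numerical bookkeeping is unresolved. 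To complete the argument you would need something playing the role of the two-eigenvector key lemma, not just the single-vector Hodge--Riemann positivity.

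Finally, a structural note: by passing to the semisimple part of $f^*$ (Lemma \ref{unipotent semisimple algebraic}), the paper can work with a genuinely diagonalizable operator and a weight decomposition, at the price of losing the K\"ahler cone. Your route keeps the cone but then has to contend with possible Jordan blocks, which you don't address. This trade-off is why the paper deliberately discards the cone and relies on the purely algebraic constraints (1)--(4) of \textsection\ref{first constraints} together with Lemma \ref{key lemma}.
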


For a proof see Proposition \ref{r=2 global} (for the case where $\lambda_1$ and $\lambda_2$ are multiplicatively independent) and \ref{r=1 global} (for the case where $\lambda_1$ and $\lambda_2$ have a multiplicative dependency).

\begin{rem}
While proving Theorem \ref{thm:moduli eigenvalues}, we will also obtain that if $\lambda_2\notin \{\lambda_1^2, \sqrt{\lambda_1}\}$, then $\lambda_1$ and $\lambda_2\inv$ are the only eigenvalues of $f^*_2$ having modulus $\lambda_1$ or $\lambda_2\inv$.\\
This had already been proven in much wider generality by Truong in \cite{MR3255693}.
\end{rem}

Finally, we describe the (moduli of) Galois conjugates of $\lambda_1(f)$ over $\Q$:

\begin{THM}
\label{thm:conjugates lambda_1}
Let $X$ be a compact K\"ahler threefold and let $f\colon X\to X$ be an automorphism having dynamical degrees $\lambda_1=\lambda_1(f)$ and $\lambda_2=\lambda_2(f)$. Then $\lambda_1$ is an algebraic integer, all of whose conjugates over $\Q$ have modulus belonging to the following set:
$$\left\{ \lambda_1,\lambda_2\inv,\lambda_1\inv\lambda_2, \sqrt{\lambda_1\inv}, \sqrt{\lambda_2}, \sqrt{\lambda_1\lambda_2\inv} \right\}.$$
\end{THM}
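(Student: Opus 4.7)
The plan is to deduce Theorem~\ref{thm:conjugates lambda_1} from Theorem~\ref{thm:moduli eigenvalues} together with the observation that every Galois conjugate of $\lambda_1$ over $\Q$ must occur again as an eigenvalue of $f_2^*$.

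First, $\lambda_1$ is an algebraic integer. The automorphism $f$ induces $f_2^* \in \GL(H^2(X,\Z))$, so its characteristic polynomial $\chi \in \Z[t]$ is monic; and a Perron--Frobenius argument applied to the K\"ahler cone $\K \subset H^{1,1}(X,\R)$, which is preserved by $f_2^*$, shows that the spectral radius $\lambda_1$ is in fact attained as an eigenvalue of $f_2^*$. Hence $\lambda_1$ is a root of $\chi$. For any $\sigma \in \Gal(\overline{\Q}/\Q)$, the element $\mu := \sigma(\lambda_1)$ is a root of the minimal polynomial of $\lambda_1$, which divides $\chi$; so $\mu$ is itself an eigenvalue of $f_2^*$.

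Applying Theorem~\ref{thm:moduli eigenvalues} to $\mu$ produces a minimal integer $N \geq 0$ with $|\mu|^{(-2)^N} \in \{1,\lambda_1,\lambda_2^{-1},\lambda_1^{-1}\lambda_2\}$. If $N = 0$ then $|\mu|$ already lies in $\{1,\lambda_1,\lambda_2^{-1},\lambda_1^{-1}\lambda_2\}$, while if $N = 1$ then $|\mu| \in \{1,\sqrt{\lambda_1^{-1}},\sqrt{\lambda_2},\sqrt{\lambda_1\lambda_2^{-1}}\}$. Combining, and noting that the value $1$ is redundant (trivially subsumed by $\sqrt{\lambda_1^{-1}}$ when $\lambda_1 = 1$, and, for $\lambda_1 > 1$, incompatible with $\mu$ being a Galois conjugate of $\lambda_1$ --- a point one verifies using the remark following Theorem~\ref{thm:moduli eigenvalues} together with the fact that $\bar\mu$ is another conjugate of $\lambda_1$ and therefore another eigenvalue of $f_2^*$), one recovers precisely the six-element set of the statement.

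The main obstacle is therefore to show that one may always take $N \leq 1$. Here the ``furthermore'' clause of Theorem~\ref{thm:moduli eigenvalues} is the essential tool: for $N \geq 1$ it produces a tower of positive real eigenvalues $\nu_k := |\mu|^{(-2)^k}$, $1 \leq k \leq N$, each constrained by $\lambda_2^{-1} \leq \nu_k \leq \lambda_1$ (the extremal moduli of the spectrum of $f_2^*$). If $N \geq 2$, then by minimality of $N$ the eigenvalue $\nu_1 = |\mu|^{-2}$ does not lie in $\{1,\lambda_1,\lambda_2^{-1},\lambda_1^{-1}\lambda_2\}$, and Theorem~\ref{thm:moduli eigenvalues} applied to $\nu_1$ produces its own tower. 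A naive descent only yields $N(\nu_1) = N - 1$, which is not by itself contradictory; to close the gap one must exploit finer arithmetic features of the set $\{1,\lambda_1,\lambda_2^{-1},\lambda_1^{-1}\lambda_2\}$ --- particularly that it arises from Galois-invariant relations coming from complex conjugation and the Hodge decomposition on $H^2(X,\C)$ --- to forbid iterated fractional powers of these four values from being moduli of Galois conjugates of the distinguished eigenvalue $\lambda_1$. This bootstrapping step is the delicate point, and it is where I would concentrate the bulk of the argument, most likely by revisiting the proof of Theorem~\ref{thm:moduli eigenvalues} (Propositions~\ref{r=2 global} and~\ref{r=1 global}) rather than treating it as a black box.
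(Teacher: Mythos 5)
Your high-level reduction is a correct reading of the situation: $\lambda_1$ is an eigenvalue of the integer matrix $f_2^*$ (via Birkhoff's theorem on the preserved positive cone), hence an algebraic integer whose Galois conjugates are again eigenvalues; and Theorem \ref{thm:moduli eigenvalues} then constrains the moduli of those eigenvalues to $|\mu|^{(-2)^N}\in\{1,\lambda_1,\lambda_2^{-1},\lambda_1^{-1}\lambda_2\}$. But you correctly identify, and then leave open, the entire substance of the theorem: showing that for conjugates of $\lambda_1$ one can take $N\leq 1$. This is not a bootstrapping detail --- it is the proof. The paper does \emph{not} establish it by any descent on $N$; it develops three independent pieces of structure that your sketch never invokes.

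First, because $f_2^*$ and $(f_2^*)^{-1}$ are both integral, each irreducible factor $P_i$ of the minimal polynomial satisfies $P_i(0)=\pm 1$, hence $\sum_{w\in W_i}w=0$ (Remark \ref{sum W_i=0}). This linear relation among the weights of a \emph{single} Galois orbit is the lever that eventually pins the shifts $n_i$ to $0$. Second, the paper proves (Proposition \ref{r=2 conjugates}, via Lemma \ref{Galois} and an argument modulo $4\Gamma$ in the lattice $\Gamma=\Z w_1\oplus\Z w_2$) that along any ray $\R w_i$, the weights appearing within one irreducible factor $P_k$ span at most one ``halving step'' $\{w_i/(-2)^{n},\,w_i/(-2)^{n+1}\}$; this uses the fact that the Galois group permutes $W_k$, so an equation like $\lambda\bar\lambda=(\lambda'\bar\lambda')^{(-2)^j}$ can be transported to the ``top'' of the orbit and contradicted arithmetically. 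Third, to locate where those two layers sit for the factor $P_1$ containing $\lambda_1$, the paper studies the bilinear map $\theta(u,v)=u\wedge v$ restricted to $V_1\times V_1$ where $V_1=\ker P_1(g_2)$: by minimality of the $\Q$-defined $g$-invariant subspaces (Remark \ref{minimality V_i}), $\theta(V_1\times V_1)$ must equal $V_1^\vee$ or $V_1^\vee\oplus V_j^\vee$ (Lemmas \ref{lemma:image theta V_1} and \ref{lemma V_1 x V_1 subset V_j}), and combining this with the non-vanishing of wedge products from Lemma \ref{key lemma} (i.e.\ the Hodge index theorem) rules out deep weights in $W_1$. None of this is reachable by treating Theorem \ref{thm:moduli eigenvalues} as a black box, nor by re-running its proof, since those propositions constrain $W$ globally but say nothing about the Galois orbit $W_1$ in isolation.

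A smaller point: your dismissal of modulus $1$ is not quite right. A conjugate of $\lambda_1$ of modulus $1$ \emph{is} permitted when $\lambda_1=\lambda_2$ (the six-element set then contains $1$, and case (3) of Proposition \ref{r=1 conjugates lambda_1} realizes such conjugates). The argument that works in the other cases is the one in Lemma \ref{r=2 conjugate weights non-null}: if $|\mu|=1$ with $\mu$ conjugate to $\lambda_1$, a Galois element sending $\mu\mapsto\lambda_1$ forces $\lambda_1^{-1}$ to be a conjugate as well, giving both $w_1$ and $-w_1$ as weights of a single factor, which the weight structure of Propositions \ref{r=2 global}/\ref{r=1 global} forbids unless $\lambda_1=\lambda_2$.
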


See Proposition \ref{r=2 conjugates lambda_1} and \ref{r=1 conjugates lambda_1} for a proof and for a more detailed description of all possible subcases.

In Section \ref{intro} we introduce the problem and the tools which will be used in the proofs, namely the generalized Hodge index theorem, an application of Poincar\'e's duality and some elements of the theory of algebraic groups; in Section 2 we present the known results in dimension two. In the rest of the paper we treat the case of dimension three: in Section 3 we give a proof of Theorem \ref{thm:unipotent} and describe examples on complex tori which show the optimality of the result; similarly, in Section 4 and Section 5 we prove Theorem \ref{thm:moduli eigenvalues} and \ref{thm:conjugates lambda_1} respectively, and describe further examples on tori which show the optimality of the claims; finally, in Section 6 we address the problem to determine whether $f^*$ can be neither (virtually) unipotent nor semisimple (see Proposition \ref{mixed case}).

\textbf{Acknowledgements.} I wish to express my deepest gratitude to my advisor Serge Cantat for proposing me this problem and a strategy of proof, and for all the help and suggestions he provided me at every stage of this work.

\section{Introduction and main tools}
\label{intro}

Throughout this section, we denote by $f\colon X\to X$  an automorphism of a compact K\"ahler manifold $X$ of complex dimension $d$ and by 
$$f^*\colon H^*(X,\R)\to H^*(X,\R)$$
the induced linear automorphism on cohomology. We still denote by $f^*\colon H^*(X,\C)\to H^*(X,\C)$ the complexification of $f^*$, and by $f^*_k$ (resp. $f^*_{p,q}$) the restriction of $f^*$ to $H^k(X,\R)$ (resp. to $H^{p,q}(X)$).

\subsection{First constraints}
\label{first constraints}

The induced linear automorphism $f^*$ preserves some additional structure on the cohomology space $H^*(X,\R)$:

\begin{enumerate}
\item the graduation and Hodge decomposition: in other words $f^*(H^i(X,\R))=H^i(X,\R)$ for all $i= 0, \ldots ,2d$ and  $f^*(H^{p,q}(X))=H^{p,q}(X)$ for $p,q=0,\ldots d$;
\item the wedge (or cup) product: in other words $f^*(u\wedge v)=f^*(u)\wedge f^*(v)$ for all $u,v\in H^*(X,\R)$;
\item Poincar\'e's duality: in other words the isomorphism $H^i(X,\R)\cong H^{2d-i}(X,\R)^\vee$ induces an identification
$$f^*_i=((f^*_{2d-i})\inv)^\vee.$$
\end{enumerate}

Furthermore
\begin{enumerate}
\item[(4)] $f^*$ and $(f^*)^{-1}$ are defined over $\Z$; in other words, the coefficients of $f^*$ and $(f^*)^{-1}$ with respect to an integral basis of $H^*(X,\R)$ are integers;
\item[(5)] $f^*$ preserves the convex salient cones $\mathcal K_p\subset H^{p,p}(M,\R)$ generated by the classes of positive currents (see \cite{demailly1997complex}).
\end{enumerate}

Remark that properties $(1)-(4)$ are algebraic, while property $(5)$ is not.

The rough strategy for the proofs will be to consider the action of $f^*$ on elements of the form $u\wedge v$, $u,v \in H^2(X,\C)$, and relate $f^*_4$ with  $f^*_2$ by using property $(3)$.

\subsection{The unipotent and semisimple parts}

Let $V$ be a finite dimensional real vector space and let $g\colon V \to V$ be a linear endomorphism. It is well-known that there exists a unique decomposition
$$g=g_u \circ g_s = g_s \circ g_u,$$
where $g_u$ is unipotent (i.e. $(g_u-\id_V)^{\dim V}=0$) and $g_s$ is semisimple (i.e. diagonalizable over $\C$). This is a special case of the following more general statement:

\begin{thm}[Jordan decomposition]
\label{Jordan decomposition}
Let $V$ be a finite dimensional real vector space and let $G\subset \GL(V)$ be a commutative algebraic group. Then, denoting by $G_u\subset G$ (respectively by $G_s\subset G$) the subset of unipotent (respectively semisimple) elements of $G$, $G_u$ and $G_s$ are closed subgroups of $G$ and the product morphism induces an isomorphism of real algebraic groups
$$G\cong G_u\times G_s.$$
\end{thm}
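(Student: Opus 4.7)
The plan is to bootstrap the classical Jordan decomposition of a single element to a decomposition of the whole group $G$, with commutativity and the algebraic structure of $G$ doing the heavy lifting.

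First, I would start from the elementwise decomposition: each $g \in \GL(V)$ admits a unique factorization $g = g_s g_u = g_u g_s$ with $g_s$ semisimple, $g_u$ unipotent, and the two commuting; moreover, $g_s$ and $g_u$ can be written as polynomials in $g$. The first substantive step, which is the technical heart of the proof, is to check that this decomposition descends to $G$: if $g \in G$ then $g_s, g_u \in G$. Since $G$ is algebraic, Chevalley's theorem realises it as the stabiliser of a line $L \subset W$ for some tensor construction $W$ on $V$ and $V^*$; because $g_s$ and $g_u$ are polynomials in $g$, both preserve $L$, once one verifies that Jordan decomposition commutes with tensor products and duals -- a short but essential verification.

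Once the decomposition lives inside $G$, commutativity upgrades it to a group-theoretic product. Two commuting semisimple elements of $\GL(V)$ are simultaneously diagonalisable over $\C$, so their product is again semisimple; two commuting unipotents $g = \id + N_1$, $h = \id + N_2$ with $N_1 N_2 = N_2 N_1$ nilpotent have product $gh = \id + (N_1 + N_2 + N_1 N_2)$, whose nilpotent perturbation is again nilpotent, so $gh$ is unipotent. This makes $G_s$ and $G_u$ subgroups of $G$. Their intersection is trivial since the only element that is both semisimple and unipotent is $\id$, and the product map $G_u \times G_s \to G$, $(u,s) \mapsto us$, is then a bijective group homomorphism by commutativity and the previous step.

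To promote this bijection to an isomorphism of real algebraic groups, the inverse $g \mapsto (g_u, g_s)$ must be regular. Here I would invoke the Lie--Kolchin theorem applied to $G \otimes_\R \C$: there exists a basis of $V \otimes \C$ in which every element of $G \otimes \C$ is upper triangular, and in such a basis the Jordan decomposition of an upper-triangular matrix reads as (diagonal part) $\times$ (unipotent upper-triangular part), with both assignments manifestly regular morphisms. Descending via Galois invariance yields regular maps $g \mapsto g_s$ and $g \mapsto g_u$ on $G$, so $G_s$ and $G_u$ are closed subgroups and the product map is an algebraic isomorphism. The main obstacle I expect is the functoriality statement in the second paragraph -- verifying that the abstract Jordan decomposition actually respects the algebraic defining equations of $G$. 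Once that is in place, commutativity together with the standard triangularisation theorems for commutative algebraic subgroups of $\GL(V)$ does the rest.
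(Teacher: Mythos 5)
The paper states this result without proof; it is a standard theorem in the structure theory of linear algebraic groups (see, e.g., Borel, \emph{Linear Algebraic Groups}, \S 4), so there is no argument in the text to compare against. Your overall strategy — bootstrapping the elementwise Jordan decomposition into $G$ via Chevalley's theorem and the functoriality of $g\mapsto (g_s,g_u)$ under rational representations, then using commutativity to get subgroups and a bijective product morphism — is the standard one, and you correctly single out the functoriality check as the technical heart.

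There is, however, a genuine gap in your final regularity step. You appeal to a simultaneous triangularization and then assert that ``the Jordan decomposition of an upper-triangular matrix reads as (diagonal part) $\times$ (unipotent upper-triangular part)''; this is false for an arbitrary upper-triangular matrix. For example $\left(\begin{smallmatrix}1&1\\0&2\end{smallmatrix}\right)$ is upper triangular and already semisimple, yet its diagonal part $\left(\begin{smallmatrix}1&0\\0&2\end{smallmatrix}\right)$ is not its semisimple factor. The assertion only becomes true if the basis is chosen so that the diagonal entries of each $g\in G$ are constant on each block; concretely, one must first decompose $V\otimes\C$ into the common eigenspaces $V_\chi$ of the (commuting) family $\{g_s : g\in G\}$, and only then unitriangularize the commutative unipotent group $G_u$ within each $V_\chi$. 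In such an adapted basis $g\mapsto \mathrm{diag}(g)$ does give $g_s$ and the two coordinate maps are visibly regular. Alternatively, one can avoid the triangularization altogether: once $G_u\times G_s\to G$ is known to be a bijective morphism of algebraic groups in characteristic $0$, it is automatically an isomorphism, and $G_s$ is then closed as the image of a section. Also a minor point: you invoke Lie--Kolchin, which applies to \emph{connected} solvable groups; for a commutative (possibly disconnected) subgroup of $\GL(V\otimes\C)$ the triangularization already follows from the elementary fact that a commuting family of matrices has a common eigenvector, so you do not need Lie--Kolchin, and relying on it as stated would implicitly (and incorrectly) assume connectedness.
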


Let us go back to the context of automorphisms of compact K\"ahler manifold. Let $f\colon X\to X$ be an automorphism of a compact K\"ahler manifold,  $V=H^*(X,\R)$,  $f^*\colon V\to V$ and
$$G=\overline{\bigcup _{n\in \Z} (f^n)^* }^{Zar}\subset \GL(V);$$
here $\overline{A}^{Zar}$ denotes the Zariski-closure of a set $A\subset \GL(V)$, i.e. the smallest Zariski-closed subset of $\GL(V)$ containing $A$. Then, since $\langle g \rangle$ is a commutative group, $G$ is a commutative real algebraic group, and by Theorem \ref{Jordan decomposition} we have an isomorphism of real algebraic groups
$$G\cong G_u\times G_s;$$
this means in particular that, writing the Jordan decomposition $f^*=f^*_u \circ f^*_s$, we have $f^*_u,f^*_s\in G$, i.e. if $f^*$ satisfies some algebraic constraint, then so do $f^*_u$ and $f^*_s$. We have therefore:

\begin{lemma}
\label{unipotent semisimple algebraic}
$f^*_u$ and $f^*_s$ preserve the cohomology graduation, Hodge decomposition, wedge product and Poincar\'e duality, and they are defined over $\Z$ (properties $(1)-(4)$ in \textsection \ref{first constraints}).
\end{lemma}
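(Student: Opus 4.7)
The plan is to observe that each of properties (1)--(3) cuts out a Zariski-closed algebraic subgroup of $\GL(V)$, so they automatically pass from $f^*$ to every element of the Zariski-closure $G=\overline{\langle (f^n)^*\rangle}^{Zar}$. Since Theorem \ref{Jordan decomposition} places $f^*_u,f^*_s$ in $G$, both components inherit (1)--(3). Property (4), on the other hand, will be handled by a separate rationality argument, since integrality of matrix entries is not itself a Zariski-closed condition.

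The bulk of the verification is to recognize each condition as algebraic after fixing an integral basis of $H^*(X,\R)$. Preservation of the graduation is a linear condition on matrix entries; preservation of the Hodge decomposition, after complexification with respect to a basis adapted to the Hodge type, is equally linear. Preservation of the wedge product, written on basis elements as $g(e_i\wedge e_j)=g(e_i)\wedge g(e_j)$, is a system of quadratic equations in the entries of $g$ (both sides expand in the fixed basis of $V$). Poincar\'e duality expresses that $g$ preserves the perfect pairing $H^i(X,\R)\times H^{2d-i}(X,\R)\to H^{2d}(X,\R)\cong \R$: since $H^{2d}(X,\R)$ is one-dimensional and $f^*$ acts on it as the identity (the map $f$ being a biholomorphism, hence orientation-preserving), the same is true for every $(f^n)^*$, and acting as the identity on $H^{2d}(X,\R)$ is itself a closed condition, hence it holds on all of $G$. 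Thus (3) reduces to the quadratic condition expressing that $g$ preserves the Poincar\'e pairing matrix.

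Once these verifications are in place, the conclusion is essentially tautological: the intersection $H\subset \GL(V)$ of the Zariski-closed subgroups corresponding to (1)--(3) contains $(f^n)^*$ for every $n\in \Z$, so $G\subset H$, so $f^*_u,f^*_s\in G\subset H$. For property (4) one proceeds slightly differently: the Zariski-closure $G$ is automatically defined over $\Q$ (because $f^*$ has rational, in fact integer, entries, and the Zariski-closure of a set of $\Q$-points in $\GL_n$ is a $\Q$-subgroup), and the classical Jordan--Chevalley theorem over $\Q$ expresses $f^*_u$ and $f^*_s$ as polynomials in $f^*$ with rational coefficients, so in particular with rational entries. The main (minor) obstacle here is the integrality caveat: since $\Z$-rationality is not a Zariski-closed condition, the phrasing ``defined over $\Z$'' in the statement of the lemma should be read as $\Q$-rationality, which is all that the applications in the sequel require.
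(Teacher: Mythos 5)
Your argument for properties (1)--(3) is essentially the paper's: each of these conditions cuts out a Zariski-closed subgroup of $\GL(H^*(X,\R))$, and since Theorem~\ref{Jordan decomposition} places $f^*_u$ and $f^*_s$ in the Zariski closure $G$ of $\langle f^*\rangle$, they inherit all three. Where you differ is in the treatment of property (4), and there you are, if anything, more careful than the text. The paper subsumes (4) under the blanket remark that ``properties $(1)$--$(4)$ are algebraic,'' but $\Z$-integrality of matrix entries is not a Zariski-closed condition, and it genuinely can fail to pass to Jordan components: already in $\SL_4(\Z)$ there are matrices with minimal polynomial $(T^2-T-1)^2$ whose semisimple and unipotent parts have denominators. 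Your replacement --- $\Q$-rationality of $f^*_u,f^*_s$, obtained from the Jordan--Chevalley decomposition over the perfect field $\Q$ (they are polynomials in $f^*$ with rational coefficients), or equivalently from $G$ being a $\Q$-group --- is exactly what the argument supports, and it does suffice for the downstream uses: in Section~\ref{section semisimple thm C} the claim $P(T)\in\Z[T]$ for the minimal polynomial of $g_2$ follows from $P$ being monic in $\Q[T]$ with algebraic-integer roots (the eigenvalues of $g_2$ are among those of $f^*_2$), and $P_i(0)=\pm1$ in Remark~\ref{sum W_i=0} then follows since each $P_i$ divides the characteristic polynomial of $g_2$, whose constant term is $\pm\det g_2=\pm1$. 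So reading ``defined over $\Z$'' as $\Q$-rationality is both what the proof delivers and what the paper actually needs.
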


Remark however that, since preserving a cone is not an algebraic property, $f^*_u$ and $f^*_s$ may not preserve the positive cones $\mc K_p$.

\begin{rem}
\label{Jordan form Hodge}
Let $g\in \GL(H^*(X,\R))$ be a linear automorphism preserving the cohomology graduation, Hodge decomposition, wedge product and Poincar\'e duality. Then we may put (the complexification of) $g$ in Jordan form so that each Jordan block corresponds to a subspace of $H^{p,q}(X)$ for some $p,q$.\\
Furthermore, if all eigenvalues of $g$ are real, we can put $g$ itself in Jordan form so that each Jordan block corresponds to a subspace of $\left(H^{p,q}(X)\oplus H^{q,p}(X)\right)_\R$ for some $p,q$.
\end{rem}

\subsection{Dynamical degrees}
\label{sec: dyn deg}

In this paragraph only, we allow $f\colon M\rato M$ to be a dominant meromorphic self-map of a compact K\"ahler manifold $M$.

\begin{dfn}
\label{def:dyn deg}
The $p$-th dynamical degree of $f$ is defined as
$$\lambda_p(f)= \limsup_{n\to +\infty}\norm{(f^n)_{p,p}^*}^{\frac 1n},$$
where $\norm{\cdot}$ is any matrix norm on the space $\mc L(H^{p,p}(X,\R))$ of linear maps of $H^{p,p}(X,\R)$ into itself.
\end{dfn}

In the meromorphic case the pull-backs $f^*_{p,p}$ are defined in the sense of currents (see \cite{demailly1997complex}).\\
One can prove that
\begin{equation}
\label{dynamical degrees using kahler form}
\lambda_p(f)=\lim_{n\to +\infty} \left( \int_M (f^n)^*\omega^p \wedge \omega^{d-p}\right)^{\frac 1n}
\end{equation}
for any K\"ahler form $\omega$; see \cite{MR2180409}, \cite{MR2119243} for details.

In the case of holomorphic maps, we have $(f^n)^*=(f^*)^n$, so that $\lambda_p(f)$ is the spectral radius (i.e. the maximal modulus of eigenvalues) of the linear map $f^*_{p,p}$; since $f^*$ also preserves the positive cone $\mathcal K_p\subset H^{p,p}(M,\R)$, a theorem of Birkhoff \cite{MR0214605} implies that $\lambda_p(f)$ is a positive real eigenvalue of $f^*_p$. In particular, $\lambda_p(f)$ is an algebraic integer.
However it should be noted that in the meromorphic setting we have in general $(f^n)^*\neq (f^*)^n$.

At least in the projective case, the $p$-th dynamical degree measures the exponential growth of the volume of $f^{-n}(V)$ for subvarieties $V\subset M$ of codimension $p$, see \cite{MR2139697}.

\begin{rem}
By definition $\lambda_0(f)=1$; $\lambda_d(f)$ is called the topological degree of $f$: it is equal to the number of points in a generic fibre of $f$.
\end{rem}

The \emph{topological entropy} of a continuous map of a topological space is a non-negative number, possibly infinite, which gives a measure of the chaos created by the map and its iterates; for a precise definition see \cite{MR1928518}. The computation of the topological entropy of a map is usually complicated, and requires ad hoc arguments; however, in the case of dominant holomorphic self-maps of compact K\"ahler manifolds one can apply the following result due to Yomdin \cite{MR889979} and Gromov \cite{MR1095529}:

\begin{thm}[Yomdin-Gromov]
\label{yomdin-gromov}
Let $f\colon M\to M$ be a dominant holomorphic self-map of a compact K\"ahler manifold of dimension $d$; then the topological entropy of $f$ is given by
$$h_{top}(f)=\max_{p=0,\ldots, d} \log\lambda_p(f).$$
\end{thm}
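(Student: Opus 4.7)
The plan is to prove the two inequalities separately: the upper bound $h_{top}(f)\le \max_p\log\lambda_p(f)$ (due to Gromov) and the lower bound $h_{top}(f)\ge \max_p\log\lambda_p(f)$ (due to Yomdin).

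For Gromov's upper bound, I would consider the graphs of iterates $\Gamma_n=\{(x,f(x),\ldots,f^{n-1}(x)) : x\in M\}\subset M^n$, which are $d$-dimensional complex submanifolds. Endowing $M^n$ with the product K\"ahler form $\Omega_n=\sum_{k=0}^{n-1}\pi_k^*\omega$ (where $\pi_k$ is the projection onto the $k$-th factor), the parametrization $x\mapsto (x,f(x),\ldots,f^{n-1}(x))$ of $\Gamma_n$ yields
\[
\operatorname{vol}(\Gamma_n)=\frac{1}{d!}\int_{\Gamma_n}\Omega_n^d=\frac{1}{d!}\int_M\left(\sum_{k=0}^{n-1}(f^k)^*\omega\right)^d.
\]
Expanding the $d$-th power, the integrand becomes a sum of cross terms of the form $(f^{k_1})^*\omega\wedge\cdots\wedge(f^{k_d})^*\omega$. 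Using \eqref{dynamical degrees using kahler form} together with the multiplicativity of pull-back for holomorphic maps (so that $(f^n)^*=(f^*)^n$), each such term is bounded above by $C\cdot \max_p \lambda_p(f)^n$ up to polynomial factors in $n$. Hence $\limsup_n\frac{1}{n}\log\operatorname{vol}(\Gamma_n)\le \max_p\log\lambda_p(f)$. A classical comparison principle of Gromov --- based on counting $(n,\varepsilon)$-separated sets through the Riemannian metric on $\Gamma_n$ induced by $\Omega_n$ --- then upgrades this volume-growth bound into the sought upper bound on topological entropy.

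For Yomdin's matching lower bound, one first checks that the previous estimate is in fact attained, so that $\operatorname{vol}(\Gamma_n)^{1/n}\to \max_p\lambda_p(f)$. Yomdin's theorem then asserts that for \emph{every} $C^\infty$ self-map of a compact manifold, the exponential growth rate of the volumes of iterated graphs is a \emph{lower} bound for the topological entropy. Its proof is a delicate $C^k$-smoothing and semi-algebraic approximation argument, controlling how the $C^k$-norms of the iterates distort volumes on very small scales.

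The main obstacle, by far, is Yomdin's inequality: Gromov's side is a direct Hodge-theoretic computation combined with the standard volume-growth-to-entropy estimate, while Yomdin's side relies on techniques entirely outside complex geometry (real semi-algebraic geometry and $C^k$-resolution of singularities). Given the depth of the original arguments, in practice I would present Theorem~\ref{yomdin-gromov} as a black-box citation of \cite{MR889979} and \cite{MR1095529}, and use only its consequence --- that $h_{top}(f)$ is governed by the spectrum of $f^*$ --- in the sequel.
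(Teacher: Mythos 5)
The paper does not prove this theorem: it is stated as a known result with citations to Yomdin \cite{MR889979} and Gromov \cite{MR1095529}, which is precisely what you ultimately recommend. Your sketch of the two halves is accurate --- Gromov's upper bound via Wirtinger-volume growth of iterated graphs in $M^n$ combined with the standard volume-to-entropy comparison, and Yomdin's lower bound via $C^k$-smoothing and semi-algebraic techniques --- and you correctly identify Yomdin's inequality as the genuinely hard part whose methods lie outside complex geometry. Treating the theorem as a black box is the right call and matches the paper.
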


In the meromorphic setting one can only prove that the topological entropy is bounded from above by the maximum of the logarithms of the dynamical degrees (see \cite{MR2180409}, \cite{MR2119243}).

\subsubsection{Concavity properties}

\hyphenation{ma-ni-fold}

\begin{thm}[Teissier-Khovanskii, see \cite{MR1095529}]
\label{TK}
Let $X$ be a compact K\"ahler manifold of dimension $d$, and $\Omega:=(\omega_1,\ldots, \omega_k)$ be $k$-tuple of K\"ahler forms on $X$. For any multi-index $I=(i_1,\ldots, i_k)$ let $\Omega^I=\omega_1^{i_1}\wedge\ldots \wedge \omega_k^{i_k}$.\\
Fix $i_3,\ldots, i_k$ so that $i:=\sum_{h\geq 3}i_h\leq d$, and let $I_p=(p,d-i-p,i_3,\ldots, i_k)$; then the function
$$p\mapsto \log\left(\int_X \Omega^{I_p}\right)$$
is concave on the set $\{0,1,\ldots, d-i\}$.
\end{thm}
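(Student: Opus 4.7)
The plan is to reduce the concavity statement to a discrete three-term (log-concavity) inequality, then to prove it by interpreting it as a reverse Cauchy-Schwarz inequality for a bilinear form on $H^{1,1}(X,\R)$, and finally to establish the needed Lorentzian signature of this bilinear form via the mixed Hodge-Riemann bilinear relations.

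First, concavity of a real function on consecutive integers is equivalent to the midpoint inequality $2f(p)\geq f(p-1)+f(p+1)$; exponentiating, log-concavity of $p\mapsto \int_X \Omega^{I_p}$ reduces to
$$\left(\int_X \Omega^{I_p}\right)^2 \geq \left(\int_X \Omega^{I_{p-1}}\right)\left(\int_X \Omega^{I_{p+1}}\right), \qquad 1\leq p\leq d-i-1.$$
Setting $\Psi:=\omega_1^{p-1}\wedge \omega_2^{d-i-p-1}\wedge \omega_3^{i_3}\wedge \ldots \wedge \omega_k^{i_k}$, a closed strictly positive $(d-2,d-2)$-form, the inequality becomes
$$\left(\int_X \omega_1\wedge \omega_2\wedge \Psi\right)^2 \geq \left(\int_X \omega_1^2\wedge \Psi\right)\left(\int_X \omega_2^2\wedge \Psi\right).$$

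Second, I would recast this as the reverse Cauchy-Schwarz inequality $Q(\omega_1,\omega_2)^2\geq Q(\omega_1,\omega_1)\,Q(\omega_2,\omega_2)$ for the symmetric bilinear form $Q(u,v):=\int_X u\wedge v\wedge \Psi$ on $H^{1,1}(X,\R)$. Note that $Q(\omega_j,\omega_j)>0$ for $j=1,2$, since these integrals are of products of K\"ahler forms. The following standard algebraic lemma yields the desired inequality: if $Q$ has signature $(1,h^{1,1}-1)$ on $H^{1,1}(X,\R)$ modulo its kernel, and $Q(u,u)>0$, then for every $v$ one has $Q(u,v)^2\geq Q(u,u)Q(v,v)$. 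Indeed, the orthogonal complement of $u$ is then negative semi-definite; writing $v=\alpha u+w$ with $Q(u,w)=0$ gives $Q(u,v)^2-Q(u,u)Q(v,v)=-Q(u,u)Q(w,w)\geq 0$.

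The main obstacle is therefore to establish this Lorentzian signature of $Q$, which is exactly the content of the \emph{mixed Hodge-Riemann bilinear relations} for a product of K\"ahler classes (Gromov, Timorin, Dinh-Nguyen). The cleanest approach is by deformation: when $\omega_1=\ldots=\omega_k=\omega$, the classical Hodge-Riemann relations directly yield signature $(1,h^{1,1}-1)$ for $(u,v)\mapsto \int_X u\wedge v\wedge \omega^{d-2}$; one then connects a generic tuple $(\omega_1,\ldots,\omega_k)$ to such a diagonal tuple by a continuous path inside the K\"ahler cone, and verifies that $Q$ stays non-degenerate along the path (since it takes strictly positive values on products of K\"ahler classes), so its signature cannot change. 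Once this is established, the reverse Cauchy-Schwarz of the previous paragraph gives the desired concavity.
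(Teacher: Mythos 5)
The paper does not prove Theorem \ref{TK}; it only cites Gromov \cite{MR1095529}. So there is no in-paper proof to compare against, and your proposal is the standard modern route. Your first two steps are correct: the reduction of concavity on $\{0,\dots,d-i\}$ to the three-term midpoint inequality
$\left(\int_X\Omega^{I_p}\right)^2\geq\left(\int_X\Omega^{I_{p-1}}\right)\left(\int_X\Omega^{I_{p+1}}\right)$
is exact after rewriting $\Omega^{I_{p\pm1}}$ with the common factor $\Psi$, and the reverse Cauchy--Schwarz lemma (signature $(1,*)$ modulo kernel plus $Q(u,u)>0$ gives $Q(u,v)^2\geq Q(u,u)Q(v,v)$) is stated and proved correctly, including the fact that it tolerates a kernel. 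Since $Q(\omega_j,\omega_j)$ is a positive intersection number of K\"ahler classes, the hypotheses of the lemma are met.

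The gap is in your ``clean'' deformation sketch for the Lorentzian signature. Positivity of $Q$ on products of K\"ahler classes does \emph{not} imply non-degeneracy of $Q$ along a path in the K\"ahler cone: a symmetric form can carry a large cone of positive vectors and still acquire a kernel, and the signature can jump exactly at such a degenerate point. Non-degeneracy of $Q_\Psi$ (equivalently, injectivity of $u\mapsto u\wedge\Psi:H^{1,1}\to H^{d-1,d-1}$) is precisely the mixed hard Lefschetz theorem for a product of K\"ahler classes, which is of the same depth as the mixed Hodge--Riemann relations you are trying to establish; so the deformation argument as written is circular rather than a shortcut. Your overall argument nonetheless stands if you use the mixed Hodge--Riemann bilinear relations (Gromov, Timorin, Dinh--Nguyen, Cattani) as a black box, which you do cite; just replace the deformation paragraph with the citation, since the sketch does not actually prove the needed input.
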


One can use Theorem \ref{TK} to prove the following log-concavity result:

\begin{prop}
\label{log-concavity}
Let $f\colon X\rato X$ be a dominant meromorphic self-map of a compact K\"ahler manifold $X$ of dimension $d$. Then the function
$$p\mapsto \log \lambda_p(f)$$
is concave on the set $\{0,1,\ldots, d\}$. In particular, if $\lambda_1(f)=1$ then $\lambda_p(f)=1$ for all $p=0,\ldots ,d$.
\end{prop}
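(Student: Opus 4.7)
The plan is to deduce the concavity of $p\mapsto \log\lambda_p(f)$ from the Teissier--Khovanskii inequality (Theorem \ref{TK}) applied to the pair of nef $(1,1)$-classes $(f^n)^*\{\omega\}$ and $\{\omega\}$, for each iterate $f^n$, and then pass to the limit via formula (\ref{dynamical degrees using kahler form}). Concretely, fix a K\"ahler form $\omega$ on $X$; for each $n\geq 1$, applying Theorem \ref{TK} with $k=2$, $i=0$, $\omega_1=(f^n)^*\omega$ and $\omega_2=\omega$ shows that the function
$$\varphi_n\colon p\longmapsto \log\int_X ((f^n)^*\omega)^p\wedge \omega^{d-p}$$
is concave on $\{0,1,\ldots,d\}$. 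Formula (\ref{dynamical degrees using kahler form}) gives $\frac{1}{n}\varphi_n(p)\to\log\lambda_p(f)$ as $n\to\infty$, and a pointwise limit of concave functions on a finite set is again concave, so $p\mapsto\log\lambda_p(f)$ is concave, proving the first assertion.

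For the ``in particular'' part, I would first observe that $\log\lambda_p(f)\geq 0$ for every $p$: indeed $\lambda_0(f)=1$ by definition, while $\lambda_d(f)$ is the topological degree, a positive integer, hence $\geq 1$; combined with the concavity just established, this forces $\log\lambda_p(f)\geq \frac{p}{d}\log\lambda_d(f)\geq 0$. Concavity on $\{0,\ldots,d\}$ is equivalent to the monotonicity of the successive differences $\log\lambda_{p+1}(f)-\log\lambda_p(f)$. The hypothesis $\lambda_1(f)=1$ makes the first difference vanish, so all later differences are non-positive, whence $\log\lambda_p(f)\leq 0$ for every $p\geq 1$. Combined with the lower bound, this yields $\lambda_p(f)=1$ for all $p$.

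The main obstacle lies in applying Theorem \ref{TK} when $f$ is only meromorphic: in that case $(f^n)^*\omega$ is merely a positive closed $(1,1)$-current representing the nef cohomology class $(f^n)^*\{\omega\}$, so one must invoke the extension of the mixed Hodge--Riemann bilinear relations (and hence of the Khovanskii--Teissier inequalities) from K\"ahler to nef classes, as worked out in the dynamical context in \cite{MR2180409,MR2119243}. In the purely holomorphic setting of the remainder of the paper no such extension is needed and the argument above goes through verbatim.
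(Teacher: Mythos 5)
Your overall strategy matches what the paper intends: Teissier--Khovanskii for each iterate, then pass to the limit via formula (\ref{dynamical degrees using kahler form}), using that concavity on the finite set $\{0,\dots,d\}$ is a finite collection of closed linear inequalities on second differences and is therefore preserved under pointwise limits. The deduction of the ``in particular'' part from $\lambda_0(f)=1$, $\lambda_d(f)\geq 1$, concavity, and $\lambda_1(f)=1$ is also correct.

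However, you misidentify the obstacle in the meromorphic case. The extension of Teissier--Khovanskii from K\"ahler to nef classes is indeed needed, but it is not the substantive difficulty; the real issue is that for a meromorphic $f$ the pull-back $(f^n)^*$ is \emph{not} a ring homomorphism, so the class $(f^n)^*(\omega^p)$ appearing in formula (\ref{dynamical degrees using kahler form}) is in general different from $\bigl((f^n)^*\{\omega\}\bigr)^p$. Applying Theorem \ref{TK} on $X$ to the two classes $(f^n)^*\{\omega\}$ and $\{\omega\}$ therefore proves concavity of the wrong function $p\mapsto\log\int_X\bigl((f^n)^*\{\omega\}\bigr)^p\wedge\omega^{d-p}$, which need not be asymptotic to $n\log\lambda_p(f)$. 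The correct move is to pass to a smooth model $\Gamma_n$ of the graph of $f^n$ with holomorphic projections $\pi_1,\pi_2\colon\Gamma_n\to X$; then
$$\int_X (f^n)^*(\omega^p)\wedge\omega^{d-p}=\int_{\Gamma_n}\pi_2^*(\omega^p)\wedge\pi_1^*(\omega^{d-p})=\int_{\Gamma_n}(\pi_2^*\omega)^p\wedge(\pi_1^*\omega)^{d-p},$$
and $\pi_1^*\omega$, $\pi_2^*\omega$ are genuine semi-positive $(1,1)$-forms (nef classes) on the compact K\"ahler manifold $\Gamma_n$, where Teissier--Khovanskii in its nef form applies and gives concavity of the sequence you actually need. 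With this correction your argument is complete; in the purely holomorphic case the two quantities coincide since pull-back by a holomorphic map commutes with wedge, so no resolution is needed there, exactly as you observe.
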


The following result was proven in \cite{MR2152480} for automorphisms, but the same proof adapts to the case of dominant meromorphic self-maps.

\begin{prop}
\label{only lambda_1 counts}
Let $f\colon X\rato X$ be a dominant meromorphic self-map of a compact K\"ahler manifold $X$, and let 
$$r_{p,q}(f)=\lim_{n\to +\infty} \norm{(f^n)^*_{p,q}}^{\frac 1n};$$
here we use the convention that the norm of the identity on the null vector space is equal to $1$.\\
Then
$$r_{p,q}\leq \sqrt{\lambda_p(f)\lambda_q(f)}.$$
In particular
$$\lim_{n+\infty} \norm{(f^n)^*}^{\frac 1n} =\max_p \lambda_p(f).$$
\end{prop}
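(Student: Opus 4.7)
The identity $\lim_n \norm{(f^n)^*}^{1/n} = \max_p \lambda_p(f)$ follows easily from the first inequality: since $\norm{(f^n)^*} \asymp \max_{p,q}\norm{(f^n)^*_{p,q}}$, we get $\lim_n \norm{(f^n)^*}^{1/n} = \max_{p,q} r_{p,q}(f) \leq \max_{p,q} \sqrt{\lambda_p(f)\lambda_q(f)} \leq \max_p \lambda_p(f)$, while the reverse inequality is immediate from $r_{p,p}(f) = \lambda_p(f)$ (which in the holomorphic case is just the definition of $\lambda_p(f)$ as the spectral radius of $f^*_{p,p}$). Thus the real task is to prove $r_{p,q}(f) \leq \sqrt{\lambda_p(f)\lambda_q(f)}$. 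I focus on the holomorphic case, the meromorphic adaptation being standard at the level of positive currents, and on $p+q\leq d$ with WLOG $p\leq q$; the complementary range $p+q>d$ is recovered from Poincar\'e duality combined with the identity $\lambda_k(f^{-1})=\lambda_{d-k}(f)$.

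My plan combines three ingredients: a Hodge-Riemann Hermitian form on $H^{p,q}(X)$, the ring-morphism property of $(f^n)^*$, and the log-concavity of the dynamical degrees (Proposition~\ref{log-concavity}). Fix a K\"ahler form $\omega$ and equip $H^{p,q}(X)$ with the pairing $H_\omega(\alpha,\beta) := c_{p,q}\int_X \alpha \wedge \bar\beta \wedge \omega^{d-p-q}$ on primitive classes, with a unit factor $c_{p,q}$ making it positive-definite by the Hodge-Riemann bilinear relations, extended via the Lefschetz decomposition to a positive-definite form on all of $H^{p,q}(X)$; the associated norm $\norm{\cdot}_\omega$ is equivalent to any matrix norm. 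Since $(f^n)^*$ is a graded ring morphism commuting with complex conjugation,
\[
(f^n)^*\alpha \wedge \overline{(f^n)^*\alpha} = (f^n)^*(\alpha\wedge \bar\alpha) \in H^{p+q,p+q}(X,\R).
\]
Using the equivalence of norms and the Lefschetz decomposition, one obtains
\[
\norm{(f^n)^*\alpha}_\omega^2 \leq C_\alpha \cdot \Bigl|\int_X (f^n)^*(\alpha\wedge \bar\alpha) \wedge \omega^{d-p-q}\Bigr|,
\]
and this integral, being the pairing of $(f^n)^*(\alpha\wedge \bar\alpha)$ with the fixed class $[\omega]^{d-p-q}$, is bounded by a constant times $\norm{(f^n)^*_{p+q,p+q}}$. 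Taking $n$-th roots and using $\lim_n \norm{(f^n)^*_{p+q,p+q}}^{1/n} = \lambda_{p+q}(f)$ yields $r_{p,q}(f)^2 \leq \lambda_{p+q}(f)$. Finally, Proposition~\ref{log-concavity} states that $k\mapsto \log\lambda_k(f)$ is concave on $\{0,\ldots,d\}$ with $\log\lambda_0(f)=0$, so applying concavity to $(p,q)$ versus $(0,p+q)$ gives $\log\lambda_p(f)+\log\lambda_q(f)\geq \log\lambda_{p+q}(f)$, i.e.\ $\lambda_{p+q}(f)\leq \lambda_p(f)\lambda_q(f)$, which delivers the claim.

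The main technical obstacle is making the second displayed inequality precise: the Hodge-Riemann norm and the unsigned integral $\gamma\mapsto |\int_X \gamma\wedge\bar\gamma\wedge \omega^{d-p-q}|$ agree only up to sign conventions and Lefschetz-combinatorial factors, since non-primitive components of $(f^n)^*\alpha$ contribute additional integrals of similar shape with higher powers of $\omega$. These extra contributions nevertheless admit analogous bounds, again via log-concavity in the form $\lambda_{p+q-2k}(f)\leq \lambda_p(f)\lambda_q(f)$, so none of them dominates. In the meromorphic setting, the ring-morphism identity $(f^n)^*\alpha\wedge \overline{(f^n)^*\alpha} = (f^n)^*(\alpha\wedge \bar\alpha)$ fails and the argument has to be carried out on the complement of the indeterminacy locus using the current-level mass estimates from pluripotential theory (\cite{demailly1997complex}, \cite{MR2180409}, \cite{MR2119243}) to transfer the bounds back to cohomology.
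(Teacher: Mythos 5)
The paper does not prove this proposition; it cites \cite{MR2152480} and says the argument adapts to the meromorphic case, so I cannot compare your approach with an in-paper proof. Your reduction of the final equality to the inequality $r_{p,q}\leq\sqrt{\lambda_p\lambda_q}$, and your use of log-concavity to pass from $\lambda_{p+q}$ to $\lambda_p\lambda_q$, are both correct. But the core step has a genuine gap.

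The issue is coercivity. Setting $\beta=(f^n)^*\alpha\in H^{p,q}(X)$, your displayed inequality asks for
\[
\norm{\beta}_\omega^2 \leq C_\alpha\left|\int_X\beta\wedge\bar\beta\wedge\omega^{d-p-q}\right|
\]
with $C_\alpha$ independent of $n$. This cannot hold: by the Hodge--Riemann bilinear relations, the Hermitian form $\beta\mapsto\int_X\beta\wedge\bar\beta\wedge\omega^{d-p-q}$ on $H^{p,q}(X)$ is block-diagonal in the Lefschetz decomposition, but the sign of each block alternates with the Lefschetz level $k$ (through the factor $(-1)^{m_k(m_k-1)/2}$ with $m_k=p+q-2k$). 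It is therefore an indefinite form with a nontrivial null cone whenever two or more Lefschetz levels are present, and there exist nonzero $\beta$ for which the integral vanishes while $\norm{\beta}_\omega$ is large. Since the Lefschetz decomposition of $(f^n)^*\alpha$ varies with $n$ in a way you have no a priori control over, cancellations can and generically will defeat the bound. Your proposed patch — estimating each Lefschetz component $L^k\beta_k$ of $(f^n)^*\alpha$ separately and invoking $\lambda_{p+q-2k}\leq\lambda_p\lambda_q$ — does not close the gap, because $(f^n)^*$ does not commute with $L=\omega\wedge\cdot$, so $L^k\beta_k$ is not $(f^n)^*$ of any fixed class and hence cannot be bounded by pairing $(f^n)^*(\cdot)$ with a fixed cohomology class. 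You would end up bounding $\norm{L^k\beta_k}$ by $\norm{(f^n)^*_{p,q}}$, which is the quantity you set out to estimate.

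The standard route (the one in \cite{MR2152480}, which is also hinted at in a commented-out passage of this paper) avoids this indefiniteness entirely. It works either on the product $X\times X$ with a suitable map $\phi\colon H^{p,q}(X)\to H^{p+q,p+q}(X\times X)$ landing in a genuinely positive cone where the pairing against a Kähler power is coercive, or equivalently by exploiting that for automorphisms $(f^n)^*$ is an isometry between the Hodge--Riemann forms associated to the polarizations $\omega$ and $(f^{-n})^*\omega$, and then comparing the two norms via mixed volume (Teissier--Khovanskii) estimates controlled by the dynamical degrees. Either way, the key is to work with a definite form or a salient cone on which the pairing is coercive; a single-variable Hodge--Riemann form on $H^{p,q}(X)$ does not provide that.
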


In other words, the exponential growth of the matrix norm of $(f^n)^*$ can be detected on the restriction to the intermediate Hodge spaces $H^{p,p}(X)$.

\begin{cor}
\label{only lambda_1=1}
Let $f\colon X\to X$ be a dominant holomorphic endomorphism of a compact K\"ahler manifold $X$. Then the following are equivalent:
\begin{enumerate}
\item $\lambda_1(f)=1$;
\item $f^*$ is virtually unipotent;
\item  $r_{p,q}(f)=1$ for all $p,q$ .
\end{enumerate}
\end{cor}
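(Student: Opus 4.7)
The plan is to establish the cyclic chain of implications $(1)\Rightarrow(2)\Rightarrow(3)\Rightarrow(1)$, combining the two previously established results (log-concavity of dynamical degrees and Proposition~\ref{only lambda_1 counts}) with the integrality of $f^*$ and Kronecker's classical rigidity theorem for algebraic integers.

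For $(1)\Rightarrow(2)$, I would first use the log-concavity of $p\mapsto \log\lambda_p(f)$ (Proposition~\ref{log-concavity}) to upgrade $\lambda_1(f)=1$ to $\lambda_p(f)=1$ for every $p=0,\ldots,d$. Proposition~\ref{only lambda_1 counts} then gives $\lim_n \|(f^n)^*\|^{1/n}=\max_p \lambda_p(f)=1$, and by Gelfand's formula the left-hand side is the spectral radius of $f^*$ acting on $H^*(X,\C)$. Hence every eigenvalue of $f^*$ has modulus at most $1$. Since $f^*$ is defined over $\Z$ (property~$(4)$ of \textsection\ref{first constraints}), its characteristic polynomial has integer coefficients, so the Galois conjugates over $\Q$ of any eigenvalue are themselves eigenvalues of $f^*$ and thus also of modulus at most $1$. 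Kronecker's theorem then forces every eigenvalue to be a root of unity, where nonzeroness is guaranteed because $f$ dominant makes $f^*$ injective on cohomology (via $f_*f^*=\deg(f)\cdot\mathrm{id}$), hence invertible. A sufficiently large power $(f^n)^*$ then has all eigenvalues equal to $1$, i.e.\ is unipotent, so $f^*$ is virtually unipotent.

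The implications $(2)\Rightarrow(3)$ and $(3)\Rightarrow(1)$ are considerably lighter. If some $(f^n)^*$ is unipotent, every eigenvalue of $f^*$, and hence of each restriction $f^*_{p,q}$, is a root of unity of modulus $1$; on every nontrivial $H^{p,q}$ the spectral radius $r_{p,q}(f)$ is therefore $1$, and on the trivial Hodge pieces the stated convention gives $r_{p,q}(f)=1$ as well. Conversely, the holomorphic identity $(f^n)^*=(f^*)^n$ together with Gelfand's formula identifies $r_{p,p}(f)$ with the spectral radius of $f^*_{p,p}$, which in the holomorphic setting equals $\lambda_p(f)$; specializing to $p=1$ yields $\lambda_1(f)=r_{1,1}(f)=1$.

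The main obstacle is the Kronecker step in $(1)\Rightarrow(2)$: the purely analytic bound on the spectral radius provided by Proposition~\ref{only lambda_1 counts} must be combined with the algebraic input that $f^*$ is an integer matrix and that the Galois conjugates of any of its eigenvalues are again eigenvalues, before Kronecker's theorem on algebraic integers all of whose conjugates have modulus at most $1$ can be invoked to conclude that these eigenvalues are roots of unity. Everything else essentially repackages information already contained in Propositions~\ref{log-concavity} and \ref{only lambda_1 counts}.
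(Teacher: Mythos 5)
Your proof is correct and follows essentially the same route as the paper's: log-concavity to extend $\lambda_1=1$ to all dynamical degrees, Proposition~\ref{only lambda_1 counts} to bound the spectral radius, and Kronecker's lemma applied to the integral matrix $f^*$ to conclude that all eigenvalues are roots of unity, hence some iterate is unipotent. You merely spell out the ``evident'' implications $(2)\Rightarrow(3)\Rightarrow(1)$ and the Galois-conjugate step inside the Kronecker argument, which the paper leaves implicit.
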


\begin{proof}
The implications $(2) \Rightarrow (3)$ and $(3)\Rightarrow (1)$ are evident; let us show that $(1) \Rightarrow (2)$.

Since by Proposition \ref{log-concavity} $\lambda_0(f)=\lambda_1(f)=\ldots =\lambda_d(f)=1$,  Proposition \ref{only lambda_1 counts}  implies that $r_{p,q}(f)\leq 1$ for all $p,q$. Therefore the spectral radii of the linear automorphisms
$$f^*_k\colon H^k(X,\Z)\to H^k(X,\Z)$$
are equal to $1$. It follows from a lemma of Kronecker
 that the roots of the characteristic polynomial of $f^*_k$ are roots of unity; therefore, some iterate of $f^*$ has $1$ as its only eigenvalue, i.e. it is unipotent. This concludes the proof.
\end{proof}

Remark that in the situation of Corollary \ref{only lambda_1=1} we have $\lambda_d(f)=1$, so that $f$ is an automorphism.

\subsubsection{Polynomial growth}

Suppose now that $f\colon X\to X$ is a dominant holomorphic endomorphism, and assume that $\lambda_1(f)=1$; then by Corollary \ref{only lambda_1=1} all the eigenvalues of $f^*$ have modulus $1$ (and in particular $f$ is an automorphism), and an easy linear algebra argument implies that
$$\norm{(f^n)^*_{p,p}} \sim cn^{\mu_p(f)},$$
where $\mu_p(f)+1$ is the maximal size of Jordan blocks of $f^*_{p,p}$. Then one can define a number measuring the polynomial growth of $(f^n)^*$.

\begin{dfn}
Suppose that $\lambda_p(f)=1$; the $p$-th polynomial dynamical degree is defined as
$$\mu_{p}(f)=\lim_{n\to +\infty} \frac{\log \norm{(f^n)^*_{p,p}}}{\log n}.$$
\end{dfn}

The following question is still open even for birational maps of $\Pj^k(\C)$, $k\geq 3$.

\begin{que}
Let $f\colon X\rato X$ be a meromorphic self-map of a compact K\"ahler manifold such that $\lambda_1(f)=1$; is it true that $\norm {(f^n)^*}$ grows polynomially?
\end{que}

The inequalities of Tessier-Khovanskii allow to prove an equivalent of Proposition \ref{log-concavity} and Proposition \ref{only lambda_1 counts}:

\begin{prop}
\label{polynomial concavity}
Let $f\colon X\to X$ be an automorphism of a compact K\"ahler manifold such that $\lambda_p(f)=1$; then 
\begin{enumerate}
\item the function
$$p\mapsto \mu_p(f)$$
is concave on the set $\{0,\ldots ,d\}$;
\item $$\lim_{n\to +\infty} \frac{\log \norm{(f^n)^*}}{\log n} = \max_p \mu_p(f).$$
\end{enumerate}
\end{prop}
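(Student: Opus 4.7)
The plan is to adapt the arguments for Propositions \ref{log-concavity} and \ref{only lambda_1 counts} to the polynomial regime, replacing $(\cdot)^{1/n}$ with $(\log n)^{-1}\log(\cdot)$ throughout.

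For part (1), I would first establish the polynomial analogue of \eqref{dynamical degrees using kahler form}: fixing a K\"ahler form $\omega$, the positive quantity
$$V_p(n):=\int_X (f^n)^*\omega^p\wedge \omega^{d-p}$$
satisfies $\log V_p(n)/\log n\to \mu_p(f)$ as $n\to \infty$. The upper bound comes from the operator norm estimate $\norm{(f^n)^*_{p,p}}\leq C n^{\mu_p(f)}$, itself a consequence of Corollary \ref{only lambda_1=1} together with standard Jordan form asymptotics; the lower bound uses a Birkhoff/Perron-Frobenius argument exploiting that $f^*_{p,p}$ preserves the salient cone $\mathcal K_p$, so that the dominant Jordan chain terminates in a class of $\overline{\mathcal K_p}$ that pairs nontrivially with $[\omega]^{d-p}$. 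Once this identification is in hand, Theorem \ref{TK} applied to the two-tuple $\Omega=((f^n)^*\omega,\omega)$ with multi-indices $I_p=(p,d-p)$ gives that $p\mapsto \log V_p(n)$ is concave on $\{0,\ldots,d\}$ for every $n$; dividing by $\log n$ and passing to the limit preserves concavity, yielding~(1).

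For part (2), I would follow the strategy of Proposition \ref{only lambda_1 counts} to prove the polynomial analogue
$$\nu_{p,q}(f):=\lim_{n\to\infty}\frac{\log\norm{(f^n)^*_{p,q}}}{\log n}\leq \frac{\mu_p(f)+\mu_q(f)}{2}.$$
The exponential-regime proof of $r_{p,q}\leq \sqrt{\lambda_p\lambda_q}$ goes through a Hodge-Riemann/Cauchy-Schwarz inequality relating $\alpha\wedge\bar\beta\in H^{p+q,p+q}(X)$ to $\alpha\wedge\bar\alpha$ and $\beta\wedge\bar\beta$; read at the $(\log n)^{-1}\log(\cdot)$ scale the same chain of inequalities produces the bound above. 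Since $(\mu_p+\mu_q)/2\leq \max_k\mu_k(f)$ trivially, and since $\norm{(f^n)^*}$ is comparable to $\sum_{p,q}\norm{(f^n)^*_{p,q}}$ up to constants, we conclude $\lim\log\norm{(f^n)^*}/\log n\leq \max_k\mu_k(f)$; the reverse inequality is immediate from the definition of $\mu_p(f)$.

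The main obstacle is the identification $\log V_p(n)/\log n\to \mu_p(f)$ in part~(1). In the exponential regime one only needs $V_p(n)^{1/n}\to \lambda_p(f)$, which tolerates polynomial corrections freely; here one really needs the leading polynomial coefficient of $V_p(n)$ to be strictly positive, and this in turn requires ruling out that $[\omega]^p$ has trivial component along the dominant Jordan block of $f^*_{p,p}$. This is the step most specific to the polynomial regime, and the one where the geometry of $\mathcal K_p$ enters essentially; everything else is a mechanical translation of the exponential-regime argument.
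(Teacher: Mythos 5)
Your proposal is correct and coincides with the route the paper intends: Proposition~\ref{polynomial concavity} is stated without a proof, the surrounding text pointing only to the Teissier--Khovanskii inequalities and to the same strategy as Propositions~\ref{log-concavity} and~\ref{only lambda_1 counts}, which is precisely what you carry out. You also correctly single out the one step that is genuinely new in the polynomial regime --- the lower bound $V_p(n)\gtrsim n^{\mu_p(f)}$ --- which the cone argument handles exactly as you describe: the functional $\ell:=\int_X(\cdot)\wedge\omega^{d-p}$ lies in the interior of the dual cone of $\mathcal K_p$ and is therefore comparable to the norm on $\mathcal K_p$, and since $[\omega]^p$ is interior to $\mathcal K_p$ one gets $\ell\bigl((f^n)^*[\omega]^p\bigr)\geq\epsilon\,\ell\bigl((f^n)^*u\bigr)$ for every fixed $u\in\mathcal K_p$, so that $V_p(n)\gtrsim\norm{(f^n)^*_{p,p}}$.
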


\subsection{Generalized Hodge index theorem}

The classical Hodge index theorem asserts that if $S$ is a compact K\"ahler surface, then the intersection product on $H^{1,1}(X,\R)$ is hyperbolic, i.e. it has signature $(1,h^{1,1}(S)-1)$; this is a consequence of the Hodge-Riemann bilinear relations, which can be generalized in higher dimension in order to obtain an analogue of the classical result. We will focus on the second cohomology group, but analogue results exist for cohomology of any order (see \cite{MR2255382}).

Let $(X,\omega)$ be a compact K\"ahler manifold of dimension $d\geq 2$; we define a quadratic form $q$ on $H^2(X,\R)$ by
$$q(\alpha,\beta):= \int_X (\alpha_{1,1}\wedge \beta_{1,1} - \alpha_{2,0}\wedge \beta_{0,2} - \alpha_{0,2}\wedge \beta_{2,0})\wedge \omega^{d-2} \quad \alpha,\beta\in H^2(X,\R),$$
where $\alpha_{i,j}$ (resp. $\beta_{i,j}$) denotes the $(i,j)$-part of $\alpha$ (resp. of $\beta$).\\
Remark that the decomposition
$$H^2(X,\R)=H^{1,1}(X,\R)\oplus (H^{2,0}(X)\oplus H^{0,2}(X))_\R$$
is $q$-orthogonal.

\begin{thm}[Generalized Hodge index theorem]
\label{hodge index}
Let $(X,\omega)$ be a compact K\"ahler manifold of dimension $d\geq 2$ and let $q$ be defined as above. Then the restriction of $q$ to $H^{1,1}(X,\R)$ has signature $(1,h^{1,1}(X)-1)$; its restriction to $(H^{2,0}(X)\oplus H^{0,2}(X))_\R$ is negative definite.
\end{thm}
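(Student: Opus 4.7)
The plan is to reduce the statement to the classical Hodge–Riemann bilinear relations via the Lefschetz decomposition. The key structural fact I would use is that the Lefschetz operator $L=\omega\wedge\cdot$ produces a decomposition
\[
H^2(X,\C)=\C\omega\oplus H^2_{\mathrm{prim}}(X,\C),
\]
where the primitive part consists of classes $\alpha$ with $\omega^{d-1}\wedge\alpha=0$. This decomposition respects the Hodge bigrading, and its real form splits $H^{1,1}(X,\R)$ as $\R\omega\oplus H^{1,1}_{\mathrm{prim}}(X,\R)$, while $(H^{2,0}(X)\oplus H^{0,2}(X))_\R$ is automatically primitive since $\omega^{d-1}\wedge\alpha$ is a form of type $(d+1,d-1)$, hence zero, whenever $\alpha\in H^{2,0}(X)$.

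First I would verify that the three pieces $\R\omega$, $H^{1,1}_{\mathrm{prim}}(X,\R)$ and $(H^{2,0}\oplus H^{0,2})_\R$ are mutually $q$-orthogonal. For the first two, $q(\omega,\alpha_0)=\int\alpha_0\wedge\omega^{d-1}=0$ by primitivity; the third is orthogonal to the other two already at the level of the integrand, because a $(1,1)$ class wedged with a $(2,0)$ or $(0,2)$ class gives no component in $H^{d,d}$ once multiplied by $\omega^{d-2}$ that survives the definition of $q$ (here the explicit form of $q$, which only pairs $(1,1)$ with $(1,1)$ and $(2,0)$ with $(0,2)$, handles everything cleanly).

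Next I would compute the sign of $q$ on each piece. On $\R\omega$ one has $q(\omega,\omega)=\int_X\omega^d>0$, giving one positive direction. On $H^{1,1}_{\mathrm{prim}}(X,\R)$, the Hodge–Riemann relation for primitive classes of type $(1,1)$ asserts $\int_X\alpha_0\wedge\alpha_0\wedge\omega^{d-2}<0$ for every nonzero $\alpha_0$, so $q$ is negative definite there; combined with the positive direction along $\omega$ this produces the signature $(1,h^{1,1}(X)-1)$. For a real class $\gamma=\alpha+\bar\alpha$ with $\alpha\in H^{2,0}(X)$ nonzero, a direct expansion gives
\[
q(\gamma,\gamma)=-2\int_X\alpha\wedge\bar\alpha\wedge\omega^{d-2},
\]
and the Hodge–Riemann relation for primitive classes of type $(2,0)$ (with the sign $(-1)^{(p+q)(p+q-1)/2}\,i^{p-q}=1$ for $p=2,q=0$) guarantees $\int_X\alpha\wedge\bar\alpha\wedge\omega^{d-2}>0$, so $q(\gamma,\gamma)<0$.

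The only real obstacle is bookkeeping of the Hodge–Riemann signs; once the correct sign conventions are set and one remembers that type $(2,0)$ classes are automatically primitive, each piece of the statement follows immediately from the corresponding bilinear relation. I would therefore simply cite the Hodge–Riemann relations in the form given, for instance, in Voisin's or Demailly's texts, and assemble the three computations above.
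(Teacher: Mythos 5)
Your argument is correct, and it is exactly the standard derivation the paper alludes to when it says the statement ``is a consequence of the Hodge-Riemann bilinear relations''; the paper does not itself prove Theorem~\ref{hodge index} but cites it as classical. Your reduction via the Lefschetz decomposition $H^{1,1}(X,\R)=\R\omega\oplus H^{1,1}_{\mathrm{prim}}(X,\R)$, together with the observation that every $(2,0)$ class is automatically primitive and the sign computations $(-1)^{k(k-1)/2}i^{p-q}=-1$ for $(1,1)$ and $=1$ for $(2,0)$, is precisely the intended route.
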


An immediate consequence, which we will use constantly in the rest of the paper, is the following:

\begin{cor}
\label{non-null wedge}
If $V\subset H^2(X,\R)$ is a $q$-isotropic space, then $\dim V <2$. In particular, if $u,v\in H^{1,1}(X,\R)\cup (H^{2,0}(X)\oplus H^{0,2}(X))_\R$ are linearly independent classes, then the classes
$$u\wedge u, u\wedge v, v\wedge v\in H^4(X,\R)$$
cannot all be null. If furthermore $u\in (H^{2,0}(X) \oplus H^{0,2}(X))_\R$, then $u\wedge u \neq 0$.\\
Analogously, if $u,v\in H^{1,1}(X)\cup (H^{2,0}(X)\oplus H^{0,2}(X))$ are linearly independent classes, then the classes
$$u\wedge \bar u, u\wedge \bar v, v\wedge \bar v\in H^4(X,\C)$$
cannot all be null. If furthermore $u\in H^{2,0}(X)\oplus H^{0,2}(X)$, then $u\wedge \bar u\neq 0$.
\end{cor}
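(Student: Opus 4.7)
The plan is to reduce every assertion to the signature computation provided by Theorem~\ref{hodge index}. Since the decomposition $H^2(X,\R)=H^{1,1}(X,\R)\oplus (H^{2,0}(X)\oplus H^{0,2}(X))_\R$ is $q$-orthogonal, the global signature of $q$ on $H^2(X,\R)$ is the sum of the two pieces, namely $(1,b_2(X)-1)$. A symmetric bilinear form of Lorentzian signature has Witt index at most $1$, so any totally $q$-isotropic subspace $V\subset H^2(X,\R)$ satisfies $\dim V\le 1$; recall that, for symmetric forms, the conditions $q(v,v)=0$ for every $v\in V$ and $q|_{V\times V}\equiv 0$ coincide by polarization.

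For the \emph{in particular} statement, I would suppose that $u,v$ are linearly independent and satisfy $u\wedge u=u\wedge v=v\wedge v=0$ in $H^4(X,\R)$, and show that $V:=\R u+\R v$ is totally $q$-isotropic, contradicting the previous step. It suffices to verify $q(\alpha,\beta)=0$ for every $\alpha,\beta\in\{u,v\}$, and I would split this into cases. If $\alpha$ and $\beta$ lie in different summands of the Hodge decomposition then the $q$-orthogonality of the decomposition gives $q(\alpha,\beta)=0$ for free. If both lie in $H^{1,1}(X,\R)$ then $q(\alpha,\beta)=\int_X\alpha\wedge\beta\wedge\omega^{d-2}$, which vanishes because $\alpha\wedge\beta=0$ in $H^4$. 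Finally, if both lie in $(H^{2,0}\oplus H^{0,2})_\R$, write $\alpha=\alpha_{2,0}+\overline{\alpha_{2,0}}$ and $\beta=\beta_{2,0}+\overline{\beta_{2,0}}$; expanding $\alpha\wedge\beta$ by Hodge type shows that $\alpha\wedge\beta=0$ in $H^4(X,\C)$ forces, in particular, $\alpha_{2,0}\wedge\overline{\beta_{2,0}}+\overline{\alpha_{2,0}}\wedge\beta_{2,0}=0$ in $H^{2,2}(X)$, which is exactly (up to sign) the integrand defining $q(\alpha,\beta)$. The last assertion of the real part is then immediate: for $0\neq u\in(H^{2,0}\oplus H^{0,2})_\R$ we have $q(u,u)<0$ by Theorem~\ref{hodge index}, so $u\wedge u$ cannot vanish in $H^4$.

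For the complex version I would replace $q$ by the Hermitian form $h(\alpha,\beta):=q_\C(\alpha,\overline{\beta})$ on $H^2(X,\C)$, with $q_\C$ the $\C$-bilinear extension of $q$; Theorem~\ref{hodge index} together with complex conjugation ensures that $h$ is Hermitian of signature $(1,b_2(X)-1)$, so that its totally $h$-isotropic complex subspaces have $\C$-dimension at most $1$. The translation between the vanishing of $u\wedge\bar u$, $u\wedge\bar v$, $v\wedge\bar v$ in $H^4(X,\C)$ and the vanishing of $h(u,u)$, $h(u,v)$, $h(v,v)$ is the word-for-word analogue of the real case, the only bookkeeping step being the need to keep track of the $(4,0)$, $(2,2)$ and $(0,4)$ Hodge pieces of each wedge product in order to extract exactly the $(2,2)$-part that appears in the definition of $q$ (or $h$). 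This Hodge-type bookkeeping is the one piece of the proof where a sign slip would be easy to make; otherwise everything is mechanical once the signature statement of Theorem~\ref{hodge index} is in hand.
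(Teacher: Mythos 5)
Your proof is correct and follows the argument the paper has in mind: the paper calls the corollary ``an immediate consequence'' of Theorem~\ref{hodge index} and gives no details, and the only step that is genuinely left implicit --- that the vanishing of $u\wedge u$, $u\wedge v$, $v\wedge v$ (resp.\ $u\wedge\bar u$, $u\wedge\bar v$, $v\wedge\bar v$) forces the vanishing of the corresponding $q$- (resp.\ Hermitian $h$-) values, which requires extracting the $(2,2)$-component and noting the signs match up to an overall factor --- is exactly the bookkeeping you carry out, and you carry it out correctly. The reduction to Witt index $\le 1$ for a form of signature $(1,b_2(X)-1)$, the case split according to which summand of the $q$-orthogonal decomposition $u$ and $v$ live in, and the Hermitian-form variant for the complex statement are all sound.
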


\section{The case of surfaces}
\label{sec:surface}

Remark first that the case of automorphisms of curves is dynamically not very interesting: indeed, if the genus of the curve is $g\geq 2$, then the group of automorphism is finite; the only non-trivial dynamics arise from automorphisms of $\Pj^1$ and from automorphisms of elliptic curves (which, up to iteration, are translations), and both are well-understood.

Let us focus then on the surface case: let $S$ be a compact K\"ahler surface and let $f\colon S\to S$ be an automorphism. By the Hodge index theorem (Theorem \ref{hodge index}), the generalized intersection form $q$ makes $H^2(X,\R)$ into a hyperbolic space; furthermore, $q$ is preserved by $f$, so that we may consider $g=f_2^*\colon H^2(X,\R) \to H^2(X,\R)$ an element of $O(H^2(X,\R),q)$.

\subsection{Automorphisms of hyperbolic spaces}
\label{isometries hyperbolic}

Let $(V,q)$ be a hyperbolic vector space  of dimension $n$ and let $\norm{\cdot}$ be a norm on the space $\mc L(V)$ of linear endomorphisms of $V$.

\begin{dfn}
Let $g \in O(V,q)$. We say that $g$ is
\begin{itemize}
\item \emph{loxodromic}  (or \emph{hyperbolic}) if it admits an eigenvalue of modulus strictly greater than $1$;
\item \emph{parabolic} if all its eigenvalues have modulus $1$ and $\norm{g^n}$ is not bounded as $n\to +\infty$;
\item \emph{elliptic} if all its eigenvalues have modulus $1$ and $\norm{ g^n}$ is bounded as $n\to +\infty$.
\end{itemize}
\end{dfn}

In each of the cases above, simple linear algebra arguments allow to further describe the situation.
For the following result see for example \cite{grivaux2013parabolic}.

\begin{thm}
\label{aut hyperbolic spaces}
Let $g\in O^+(V,q)$, and suppose that $g$ preserves a lattice $\Gamma \subset V$.
\begin{itemize}
\item If $g$ is loxodromic, then it is semisimple and it has exactly one eigenvalue $\lambda$ with modulus $>1$ and exactly one eigenvalue $\lambda\inv$ with modulus $<1$; these eigenvalues are real and simple, so that in particular $\norm{g^n}\sim c\lambda^n$. The eigenvalue $\lambda$ is an algebraic integer whose conjugates over $\Q$ are $\lambda\inv$ and complex numbers of modulus $1$, i.e. $\lambda$ is a quadratic or Salem number.
\item If $g$ is parabolic, then all the eigenvalues of $g$ are roots of unity, and some iterate of $g$ has Jordan form
$$\left(
\begin{array}{ccccccccc}
1 & 1 & 0 & \bf 0\\
0 & 1 & 1 & \bf 0\\
0 & 0 & 1 & \bf 0\\
\bf 0& \bf 0& \bf 0 & I_{d-3}
\end{array}
\right).$$
In particular $\norm{g^n}\sim cn^2$.
\item If $g$ is elliptic, then it has finite order.
\end{itemize}
\end{thm}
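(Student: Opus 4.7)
The argument exploits two structural features of an isometry $g$ of a hyperbolic space $(V,q)$ of signature $(1,n-1)$: its characteristic polynomial is palindromic, so eigenvalues come in reciprocal pairs $\lambda,\lambda^{-1}$; and the Witt index of $q$ is $1$, so any $q$-isotropic real subspace has dimension $\leq 1$. Preservation of the lattice $\Gamma$ forces this characteristic polynomial to have integer coefficients, making every eigenvalue an algebraic integer.

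For the loxodromic case, the plan is to analyse the generalized eigenspaces in $V_\C$. Since the generalized eigenspaces $V_\lambda$ and $V_\mu$ are $q$-orthogonal whenever $\lambda\mu\neq 1$, the space $V_\lambda$ is self-isotropic as soon as $\lambda^2\neq 1$. If there were two distinct eigenvalues of modulus $>1$, or a single non-real one, summing their generalized eigenspaces together with their complex conjugates would produce a real isotropic subspace of dimension $\geq 2$, contradicting the Witt bound; the same bound forces $\dim V_\lambda=1$, hence simplicity, and the assumption $g\in O^+$ pins down the sign $\lambda>0$ via preservation of the future cone. On the $q$-orthogonal complement of $V_\lambda\oplus V_{1/\lambda}$ the form is negative definite, so $g$ restricted there lies in a compact orthogonal group and is semisimple; assembling the pieces gives $\norm{g^n}\sim c\lambda^n$. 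The Galois conjugates of $\lambda$ are eigenvalues of $g$, all of modulus $\leq 1$ except $\lambda$, and palindromicity forces $\lambda^{-1}$ among them; the other conjugates then lie on the unit circle, yielding the Salem/quadratic dichotomy.

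For the parabolic and elliptic cases, all eigenvalues of $g$ lie on the unit circle; together with the integrality of the characteristic polynomial, Kronecker's lemma forces them to be roots of unity, so some power $h=g^N$ is unipotent. The heart of the proof is then to classify unipotent elements of $O(V,q)$ in signature $(1,n-1)$. Writing $h=\exp X$ with $X\in\mathfrak{o}(V,q)$ nilpotent and decomposing $V$ orthogonally under an $\mathfrak{sl}_2$-triple containing $X$, each orthogonally indecomposable summand is either a single Jordan block of odd size $2k+1$ (contributing signature $(k+1,k)$ or $(k,k+1)$) or a pair of equal-sized even Jordan blocks (contributing $(m,m)$ for block size $2m$). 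Since $q$ has a single positive direction, the only partitions compatible with $X\neq 0$ are $(3,1,\ldots,1)$: one block of size $3$ contributing signature $(1,2)$, the remaining size-$1$ blocks negative definite. This gives the displayed Jordan form and the growth $\norm{g^n}\sim cn^2$ in the parabolic case. The alternative $X=0$ means $h=\mathrm{id}$, so $g$ is semisimple with roots-of-unity eigenvalues and hence of finite order, giving the elliptic case.

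The main obstacle is the signature bookkeeping for nilpotent elements of $\mathfrak{o}(V,q)$, i.e.\ showing that signature $(1,n-1)$ rigidly forces the Jordan partition $(3,1,\ldots,1)$; once this is established, the remaining steps reduce to standard linear algebra and Kronecker's lemma on algebraic integers with bounded conjugates.
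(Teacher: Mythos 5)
The paper does not actually prove Theorem \ref{aut hyperbolic spaces}; it states it and refers the reader to Grivaux's survey \cite{grivaux2013parabolic}, so there is no in-text argument to compare your proposal against. Taken on its own, your plan is sound and follows the standard route: palindromicity of the characteristic polynomial, the Witt-index-$1$ bound on isotropic subspaces, $q$-orthogonality of generalized eigenspaces $V_\lambda\perp V_\mu$ when $\lambda\mu\neq 1$, and Kronecker's lemma plus a nilpotent-orbit/signature analysis for the unipotent case. Two small points deserve tightening. First, in the loxodromic case, the clause ``palindromicity forces $\lambda^{-1}$ among them'' elides the actual argument: if $\lambda^{-1}$ were not conjugate to $\lambda$, all conjugates of $\lambda$ other than $\lambda$ itself would lie on the unit circle, so the constant term of the minimal polynomial would have absolute value $\lambda>1$; being a rational integer it would force $\lambda\in\Z$, but then $\lambda^{-1}$ would fail to be an algebraic integer, contradicting that $g^{-1}$ is integral. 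Second, in the parabolic bookkeeping, a pair of equal Jordan blocks of size $2m$ spans a $4m$-dimensional hyperbolic space and contributes signature $(2m,2m)$, not $(m,m)$ as written (your stated figure does not even have the right total dimension); the conclusion that such pairs are excluded by positive index $\leq 1$ is unaffected, since $2m\geq 2$, but the arithmetic should be corrected. With these repairs the argument is complete and correct.
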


An automorphism $f\colon S\to S$ of a compact K\"ahler surface $S$ is called loxodromic, parabolic or elliptic if $g=f^*_2$ is loxodromic, parabolic or elliptic respectively. Remark that $g$ preserves the integral lattice $H^2(X,\Z)/(\text{torsion})$, so that Theorem \ref{aut hyperbolic spaces} can be applied to $g$.

Remark that, if $f$ is homotopic to the identity, then its action on cohomology is trivial. Conversely, if $f$ acts trivially on cohomology, then some of its iterates is homotopic to the identity. More precisely:

\begin{thm}[Fujiki, Liebermann \cite{Fujiki:1978, Lieberman:1978}]
Let $M$ be a compact K\"ahler manifold. If $[\kappa]$ is a K\"ahler class on $M$, the connected component of the identity $\Aut(X)^0$ has finite index in the group of automorphisms of $M$ fixing $[\kappa]$.
\end{thm}

This implies that a surface automorphism is elliptic if and only if one of its iterates is homotopic to the identity.

\subsection{Equivariant fibrations}

In the case of surfaces, it turns out that the cohomological action of an automorphism has consequences on the following property of decomposability of its dynamics.
Let $f\colon M\to M$ be an automorphism of a compact K\"ahler manifold $M$; we say that a fibration $\pi\colon M\to B$ (i.e. a surjective map with connected fibres) is \emph{$f$-equivariant} if there exists an automorphism $g\colon B\to B$ such that $\pi\circ f=g\circ \pi$, i.e. the following diagram commutes:
$$
\begin{tikzcd}
M \arrow{r}{f} \arrow[swap]{d}{\pi} & M \arrow{d}{\pi} \\
B \arrow{r}{g}& B
\end{tikzcd}.
$$

The following theorem was stated and proved in the present form by Cantat \cite{MR1864630}, and follows from a result of Gizatullin (see \cite{gizatullin1980rational}, or \cite{grivaux2013parabolic} for a survey); see also \cite{MR1867314} for the birational case.

\begin{thm}\label{thm:Gizatullin}
Let $S$ be a compact K\"ahler surface and let $f$ be an automorphism of $S$. 
\begin{enumerate}
\item If $f$ is parabolic, there exists an $f$-equivariant elliptic fibration $\pi\colon S\to C$; $f$ doesn't admit other equivariant fibrations.
\item Conversely, if a non-elliptic automorphism of a surface $f\colon S\to S$ admits an equivariant  fibration $\pi\colon S\to C$ onto a curve, then $f$ is parabolic. In particular, the fibration $\pi$ is elliptic, and it is the only equivariant fibration.
\end{enumerate}
\end{thm}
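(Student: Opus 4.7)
For part (1), assume $f$ is parabolic and let $g=f_2^*$. By Theorem \ref{aut hyperbolic spaces} applied to $g\in O^+(H^2(S,\R),q)$, up to replacing $f$ by an iterate $g$ is unipotent with a single $3\times 3$ Jordan block (on an isotropic plane) and fixes a \emph{unique} line $\R v$ in the positive cone, lying in $\mc C_0\cap H^{1,1}(S,\R)$ and on the boundary of the nef cone. Since $g$ preserves the integral lattice $H^2(S,\Z)/\mathrm{torsion}$ and the fixed subspace is defined over $\Q$, and since the unique isotropic ray in this subspace is cut out by the rational quadratic form $q$, the class $v$ may be taken in $H^{1,1}(S,\Z)$. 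The key step is then to convert this rational, nef, primitive, $q$-isotropic class into a fibration: one shows that some positive multiple $mv$ is represented by an effective divisor with no base points (Riemann--Roch plus the Hodge index theorem in the projective case; in the general K\"ahler case one invokes the classification of K\"ahler surfaces or the argument of Gizatullin). Its Stein factorization defines a morphism $\pi: S\to C$ onto a curve whose general fiber $F$ is proportional to $v$.

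To see that $\pi$ is elliptic and $f$-equivariant, observe first that $f^*v=v$ implies that $f$ preserves the linear system $|mv|$, hence descends to an automorphism $g\colon C\to C$, giving the equivariance. For the genus of a general fiber $F$, adjunction yields $2p_a(F)-2 = K_S\cdot F$; if $p_a(F)\geq 2$ a generic fiber would have finite automorphism group, so a suitable iterate of $f$ would fix every fiber pointwise on a dense set and therefore act trivially on $H^*(S,\Z)$, contradicting parabolicity. Ruling out $p_a(F)=0$ is done on the minimal model, since a non-elliptic automorphism of a rational or ruled surface cannot be parabolic with a preserved isotropic class of this type. Hence $\pi$ is elliptic. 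Finally, any other $f$-equivariant fibration $\pi':S\to C'$ onto a curve produces an $f^*$-fixed nef isotropic class $F'$, which by uniqueness of the $g$-fixed line in $\mc C_{\geq 0}$ is proportional to $v$, so $\pi'=\pi$.

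For part (2), suppose $f$ is non-elliptic and admits an equivariant fibration $\pi: S\to C$ onto a curve. The class of a general fiber $F$ is a nonzero element of $H^{1,1}(S,\Q)$, is nef, satisfies $F^2=0$, and is preserved by $f^*$ (because all fibers are numerically equivalent and $f$ permutes them). If $f$ were loxodromic, Theorem \ref{aut hyperbolic spaces} would force every $f^*$-invariant line in the nef cone to be one of the two eigenlines for $\lambda_1(f)^{\pm 1}$, and those are irrational; this contradicts $F\in H^2(S,\Q)$. Hence $f$ is parabolic, and applying part (1) identifies $\pi$ with the unique elliptic fibration produced there.

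\textbf{Main obstacle.} The delicate step is the passage from the abstract rational, nef, $q$-isotropic class $v$ to an honest fibration: over projective surfaces this is immediate from the base-point-free theorem, but in the general compact K\"ahler setting one must either reduce to the projective case or appeal to Gizatullin's construction. A second subtlety is ruling out $p_a(F)\neq 1$; this uses the Fujiki--Liebermann theorem to exchange "infinite order on cohomology" for "infinite orbit on fiber automorphisms", together with a case analysis on the Kodaira dimension of $S$.
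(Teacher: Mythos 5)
The paper does not prove Theorem \ref{thm:Gizatullin}; it quotes it as a result of Gizatullin in the form given by Cantat and points to \cite{MR1864630}, \cite{gizatullin1980rational}, and \cite{grivaux2013parabolic} for proofs. Your sketch does follow the standard strategy, and the obstacle you flag (upgrading the rational, nef, $q$-isotropic, $f^*$-fixed class $v$ to an actual fibration) is indeed where Gizatullin's argument does the real work. Part (2) is fine.

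However, your argument for $p_a(F)=1$ in part (1) contains a gap and is also longer than necessary. The inference ``$\Aut(F)$ is finite for $p_a(F)\geq 2$, hence a suitable iterate of $f$ fixes a dense set of fibers pointwise'' tacitly assumes that the induced automorphism $\bar f\colon C\to C$ has finite order: otherwise $C$ has genus $\leq 1$ and $\bar f$ has only finitely many periodic points, so no iterate of $f$ restricts to an automorphism of a dense set of fibers. In that regime one must first use $\bar f$-invariance of the moduli map $C\to M_{p_a(F)}$ to conclude the family is isotrivial and treat that case separately. Similarly, your exclusion of $p_a(F)=0$ is just asserted, and as stated (``a non-elliptic automorphism of a rational or ruled surface cannot be parabolic with a preserved isotropic class of this type'') it is easy to misread, since parabolic automorphisms of rational surfaces certainly exist; what fails is that their invariant fibration could be a ruling. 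A cleaner route avoids the genus case analysis entirely: $K_S$ is always $f^*$-invariant, and in the parabolic case every $f^*$-fixed class in $H^{1,1}(S,\R)$ is $q$-orthogonal to the isotropic fixed ray $\R v$. Indeed, writing the unipotent iterate in Jordan form as in Theorem \ref{aut hyperbolic spaces}, the size-$3$ block has basis $u_1,u_2,u_3$ with $u_1=v$, and $q$-invariance forces $q(u_1,u_2)=0$ and $q(u_1,w)=0$ for every vector $w$ fixed outside the block; hence the whole fixed subspace lies in $v^\perp$. Since $[F]$ is proportional to $v$ and $K_S$ is fixed, $K_S\cdot F=0$, and adjunction ($2p_a(F)-2=K_S\cdot F+F^2=K_S\cdot F$) gives $p_a(F)=1$ directly.
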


In other words, a non-elliptic automorphism of a surface admits an equivariant fibration if and only if its topological entropy is zero.

In higher dimension, one can ask the following question:

\begin{que}
Let $f\colon X\to X$ be an automorphism of a compact K\"ahler manifold $M$. Suppose that $f^*_2$ is virtually unipotent of infinite order. Does $f$ admit an equivariant fibration?
\end{que}

Apart from the case of surfaces, the only situation where the answer is known (and affirmative) is that of irreducible holomorphic symplectic (or hyperk\"ahler) manifolds of deformation type $K3^{[n]}$ or generalized Kummer (see \cite{MR3431659}); the proof uses the hyperk\"ahler version of the abundance conjecture, which was proven in this context by Bayer and Macr\'i \cite{MR3279532}. See also \cite{doi:10.1093/imrn/rnx109} for a converse statement, which holds for any irreducible holomorphic symplectic manifold.

\section{Automorphisms of threefolds: the unipotent case}
\label{sec:unipotent}


Throughout this section let $X$ be a compact K\"ahler threefold and let
$$g\colon H^*(X,\R)\to H^*(X,\R)$$
be a \emph{unipotent} linear automorphism preserving the cohomology graduation, the Hodge decomposition, the wedge-product and Poincar\'e's duality (properties $(1)-(3)$ in \textsection \ref{first constraints}).

If $f\colon X\to X$ is an automorphism such that $\lambda_1(f)=1$, then the linear automorphism $f^*\colon H^*(X,\R)$ is virtually unipotent, and therefore an iterate $g=(f^N)^*$ satisfies the assumptions above.\\
More generally, if $f\colon X\to X$ is any automorphism and
$$f^*=g_ug_s=g_sg_u$$
is the Jordan decomposition of $f^*$, then by Lemma \ref{unipotent semisimple algebraic} the unipotent part $g=g_u$ satisfies the assumptions above.

Theorem \ref{thm:unipotent} is thus a special case of the following theorem and of Proposition \ref{non-maximal Jordan blocks}.

\begin{thm}
\label{thm : unipotent case}
Let $X$ be a compact K\"ahler threefold and let $g\colon H^*(X,\R)\to H^*(X,\R)$ be a non-trivial unipotent linear automorphism preserving the cohomology graduation, the Hodge decomposition,  the wedge-product and the Poincar\'e duality. Then
\begin{enumerate}
\item the maximal Jordan block of $g_2$ (for the eigenvalue $1$) has dimension $\leq 5$; 
\item if furthermore $g_2$ preserves the cone $\mathcal C=\{v\in H^2(X,\R) \, ;\, q(v)\geq 0\}$, then its maximal Jordan block has odd dimension.
\end{enumerate}
In particular the norm of $g_2^n$ grows as $cn^k$ with $k\leq 4$; and if furthermore $g_2$ preserves the positive cone, then $k$ is even.
\end{thm}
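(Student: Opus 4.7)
The plan is to work with the nilpotent derivation $L := \log g$ on the graded algebra $H^*(X,\R)$. Using Remark \ref{Jordan form Hodge}, fix a Jordan chain $v_0, v_1 = Lv_0, \ldots, v_N = L^N v_0$ realising the maximal Jordan block of $g_2$ of size $m := N+1$, sitting inside a single summand $(H^{p,q}\oplus H^{q,p})_{\R}$ for some $(p,q)$ with $p+q=2$. The basic computational tool is the Leibniz-type identity
$$L^k(v_0\wedge v_0) \;=\; \sum_{i=0}^k \binom{k}{i}\, v_i\wedge v_{k-i}.$$
Poincaré duality identifies the Jordan structures on $H^2$ and $H^4$, so $L^{N+1}$ vanishes on $H^4$; for $k=2N\geq N+1$ this forces $v_N\wedge v_N = 0$ in $H^4$. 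If the chain lay in $(H^{2,0}\oplus H^{0,2})_\R$, Corollary \ref{non-null wedge} would give $v_N\wedge v_N\neq 0$, so the maximal chain must lie in $H^{1,1}$. Applied to $v_0\wedge v'_0$ for any other maximal chain $v'_0,\ldots,v'_N$, the same identity at $k=2N$ yields $v_N\wedge v'_N = 0$; together with $v_N\wedge v_N = v'_N\wedge v'_N = 0$, the span $\langle v_N, v'_N\rangle\subset H^{1,1}$ is $q$-isotropic, hence at most one-dimensional by Corollary \ref{non-null wedge}: the maximal Jordan block is unique.

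For the bound $N\leq 4$ in (1), I would iterate the Leibniz identity for $k = 2N-1, 2N-2, \ldots, N+1$ to obtain a cascade of relations in $H^4$: for instance $k = 2N-1$ gives $v_{N-1}\wedge v_N = 0$, while $k=2N-2$ gives $v_{N-2}\wedge v_N = -\tfrac{N}{2(N-1)}\, v_{N-1}\wedge v_{N-1}$. Hodge index applied to the linearly independent pair $(v_{N-1},v_N)\subset H^{1,1}$ then forces $v_{N-1}\wedge v_{N-1}\neq 0$. One then brings in the scalar triple products $T(a,b,c) := \int_X v_a\wedge v_b\wedge v_c \in \R$, which by $L|_{H^6}=0$ satisfy the first-order recurrence $T(a+1,b,c)+T(a,b+1,c)+T(a,b,c+1)=0$ together with the boundary condition $T(a,b,c)=0$ whenever some index exceeds $N$; the non-degenerate Poincaré pairing $H^2\otimes H^4\to\R$ allows the $H^4$-relations above to be tested against every $v_c$. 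The strategy is to combine these to obtain two incompatible expressions for a well-chosen triple product as soon as $N\geq 5$, forcing a class guaranteed non-zero by Hodge index to vanish. The main obstacle is the combinatorial bookkeeping of this cascade of identities, and this is the step where the specific threefold dimension enters the bound $2d-1 = 5$.

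For part (2), assume $g_2$ preserves the cone $\mathcal{C}=\{v\in H^2(X,\R):q(v)\geq 0\}$. Since $g_2$ is unipotent, hence lies in the identity component of $\GL(H^2(X,\R))$, it preserves each of the two salient connected components $\mathcal{C}^\pm$ of $\mathcal{C}\setminus\{0\}$. For any $v$ in the non-empty interior of $\mathcal{C}^+$, expand $g_2^n v = \sum_{k=0}^{N}(n^k/k!)\, L^k v$; the leading term as $|n|\to\infty$ is $n^N L^N v/N!$. Closedness of $\mathcal{C}^+$ gives $L^N v/N!\in\mathcal{C}^+$ by letting $n\to+\infty$ and $(-1)^N L^N v/N!\in\mathcal{C}^+$ by letting $n\to-\infty$. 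If $N$ were odd, this would force $L^N v\in\mathcal{C}^+\cap(-\mathcal{C}^+)=\{0\}$ for every $v$ in the interior of $\mathcal{C}^+$, hence $L^N\equiv 0$ on $H^2$, contradicting the maximality of the block of size $N+1$. Therefore $N$ is even, the maximal Jordan block has odd dimension, and combined with (1) that dimension lies in $\{1,3,5\}$.
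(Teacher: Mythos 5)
Your use of the derivation $L=\log g$ and the Leibniz identity
$L^k(v_0\wedge v_0)=\sum_i\binom{k}{i}v_i\wedge v_{k-i}$
is a clean reformulation of the paper's computation with the polynomials $P_i(n)$ from Lemma~\ref{iteration unipotent}, and the preliminary steps are sound: Poincar\'e duality gives $L_4^{N+1}=0$, so $L^{2N}(v_0\wedge v_0)=0$ yields $v_N\wedge v_N=0$, $L^{2N-1}$ yields $v_{N-1}\wedge v_N=0$ (for $N\geq 2$), $L^{2N-2}$ gives the stated relation between $v_{N-2}\wedge v_N$ and $v_{N-1}\wedge v_{N-1}$ (for $N\geq 3$), and Corollary~\ref{non-null wedge} forces $v_{N-1}\wedge v_{N-1}\neq 0$. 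The observations that the maximal chain must sit in $H^{1,1}$ and is unique are correct and go beyond what the paper records at this point. \emph{However, the decisive step is missing.} You only ever apply $L^k$ to $v_0\wedge v_0$, which produces a single linear relation at each order, and then switch to an unexecuted strategy with triple products $T(a,b,c)$, explicitly flagging the combinatorics as ``the main obstacle.'' That obstacle is exactly where the bound $N\leq 4$ is proved. The fix, which is what the paper does (in $g^n$-language, via $u_{k-1}\wedge u_{k-1}$), is to \emph{also} apply the Leibniz identity to $v_1\wedge v_1$: $L^{2N-4}(v_1\wedge v_1)=2\binom{2N-4}{N-3}v_{N-2}\wedge v_N+\binom{2N-4}{N-2}v_{N-1}\wedge v_{N-1}$, which vanishes for $2N-4\geq N+1$, i.e.\ $N\geq 5$. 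This gives the proportionality constant $-\frac{1}{2}\cdot\frac{N-1}{N-2}$ between $v_{N-2}\wedge v_N$ and $v_{N-1}\wedge v_{N-1}$, while your $L^{2N-2}(v_0\wedge v_0)$ gives $-\frac{1}{2}\cdot\frac{N}{N-1}$. Since $\frac{N}{N-1}\neq\frac{N-1}{N-2}$, the two relations force $v_{N-1}\wedge v_{N-1}=0$ when $N\geq 5$, a contradiction. Without that second wedge the proof of part (1) is incomplete.

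For part (2), the conclusion is right but the step ``$g_2$ unipotent $\Rightarrow$ in the identity component of $\GL(H^2(X,\R))$ $\Rightarrow$ preserves $\mathcal{C}^\pm$'' is not valid: $-I$ lies in the identity component of $\GL^+$ and swaps the two nappes. You need a different argument: either invoke Brouwer's fixed point theorem on $\Pj(\mathcal{C}\setminus\{0\})$ to produce a $g_2$-fixed ray inside one nappe (this is the Birkhoff theorem the paper cites), which forces $g_2$ to preserve each nappe; or simply replace $g_2$ by $g_2^2$ (which has the same Jordan blocks, since $g_2+I$ is invertible) and note that $g_2^2$ always preserves the nappes. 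With that repair, your asymptotic argument letting $n\to\pm\infty$ along $g^n v$ is correct, and agrees with the paper's appeal to Birkhoff.
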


\begin{rem}
Let $f\in \Aut(X)$ be an automorphism such that $\lambda_1(f)=1$, so that, up to iterating $f$, $g=f^*$ satisfies the assumptions of Theorem \ref{thm : unipotent case}. In this case, by Proposition \ref{polynomial concavity}, the growth of $\norm{g^n}$ is the same as the maximal growths of the $\norm{g^n_{p,p}}$, i.e. the growth of $\norm{g^n_{1,1}}$ by Poincar\'e duality. \\
Furthermore, $g$ preserves the cone $\mathcal C$, therefore the maximal Jordan block has odd dimension by \cite{MR0214605}.
\end{rem}

\begin{proof}
Let $u_1,\ldots, u_k\in H^2(X,\R)$ be a basis of a maximal Jordan block satisfying
$$g(u_1)=u_1, \qquad g(u_h)=u_h+u_{h-1}\quad \text{for }h=2,\ldots, k.$$
By Remark \ref{Jordan form Hodge} we can suppose that $u_i\in H^{1,1}(X,\R) \cup (H^{2,0}(X)\oplus H^{0,2}(X))_{\R}$ for $i=1,\ldots ,k$.

We show that the norm of $g_4^n$ grows at least as $cn^{2k-6}$. Let us consider the elements $u_k\wedge u_k, u_{k-1}\wedge u_{k-1}\in H^4(X,\R)$.\\
Using the notation of Lemma \ref{iteration unipotent} below, let $P_h=P_h(n)$; then
\begin{multline*}
g^n(u_k\wedge u_k)=g^n(u_k)\wedge g^n(u_k)=\\
=(P_{k-1}u_1+P_{k-2}u_2+P_{k-3}u_3+\ldots)\wedge(P_{k-1}u_1+P_{k-2}u_2+P_{k-3}u_3+\ldots)=\\
=P_{k-1}^2(u_1\wedge u_1)+2P_{k-1}P_{k-2}(u_1\wedge u_2)+(2P_{k-1}P_{k-3}u_1\wedge u_3+P_{k-2}^2u_2\wedge u_2)+\ldots
\end{multline*}
If $u_1\wedge u_1\neq 0$ or $u_1\wedge u_2\neq 0$, then the norm of $g_4^n$ would grow at least as $n^{2k-3}$; we can thus assume that $u_1\wedge u_1=u_1\wedge u_2=0$. Thus, by Corollary \ref{non-null wedge} we have $u_2\wedge u_2\neq 0$.\\
Let us apply Lemma \ref{iteration unipotent} and look at the dominant terms of $P_{k-1},P_{k-2}$ and $P_{k-3}$; if we had
\begin{equation}
\label{linear 1}
2 \frac {u_1\wedge u_3}{(k-1)!(k-3)!} + \frac {u_2 \wedge u_2}{\left((k-2)!\right)^2}\neq 0,
\end{equation}
then the norm of $g_4^n$ would grow at least as $cn^{2k-4}$. We may then assume that  (\ref{linear 1}) is not satisfied.\\
Now,
\begin{multline*}
g^n(u_{k-1}\wedge u_{k-1})=g^n(u_{k-1})\wedge g^n(u_{k-1})=\\
=(P_{k-2}u_1+P_{k-3}u_2+P_{k-4}u_3+\ldots)\wedge(P_{k-2}u_1+P_{k-3}u_2+P_{k-4}u_3+\ldots)=\\
=2P_{k-2}P_{k-4}(u_1\wedge u_3)+P_{k-3}^2(u_2\wedge u_2)+\ldots
\end{multline*}
By the same argument, if
\begin{equation}
\label{linear 2}
2\frac {u_1\wedge u_3}{(k-2)!(k-4)!}+\frac{u_2\wedge u_2}{\left((k-3)!\right)^2}\neq 0,
\end{equation}
then the norm of $g_4^n$ would grow at least as $cn^{2k-6}$. Since $u_2\wedge u_2\neq 0$ and the two linear relations \ref{linear 1} and \ref{linear 2} are independent, at least one between  (\ref{linear 1}) and  (\ref{linear 2}) is satisfied.\\
This shows that the norm of $g_4^n$ grows at least as $cn^{2k-6}$.

Now, by Poincar\'e duality (property $(3)$ in \textsection \ref{first constraints}), the norm of $g_4^n=(g_2^{-n})^\vee$ grows exactly as the norm of $g_2^n$. In particular
$$k-1\geq 2k-6 \qquad \Rightarrow \qquad k\leq 5,$$
which concludes the proof.
\end{proof}

\begin{lemma}
\label{iteration unipotent}
Let
$$A=\begin{pmatrix}
1 & 1& 0& \ldots &0\\
0 & 1& 1& \ldots &0\\
\vdots &\ddots &\ddots &\ddots&\vdots \\
0 &\ldots &0 &1 &1\\
0 &\ldots & &0 &1
\end{pmatrix}$$
be a Jordan block of dimension k. Then
$$A^n=
\begin{pmatrix}
P_0(n) & P_1(n)& P_2(n)& \ldots &P_{k-1}(n)\\
0 & P_0(n)& P_1(n)& \ldots &P_{k-2}(n)\\
\vdots &\ddots &\ddots &\ddots&\vdots \\
0 &\ldots &0 &P_0(n) &P_1(n)\\
0 &\ldots & &0 &P_0(n)
\end{pmatrix},
$$
where $P_i(n)$ is a polynomial of degree $i$ in $n$ whose leading term is $n^i/i!$.
\end{lemma}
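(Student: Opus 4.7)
The plan is to decompose $A = I + N$, where $N$ is the nilpotent shift matrix having $1$'s on the superdiagonal and $0$'s elsewhere. Since $I$ commutes with $N$, the binomial theorem gives
$$A^n = (I+N)^n = \sum_{i=0}^{n} \binom{n}{i} N^i.$$
Because $N^k = 0$ (the block has dimension $k$), the sum truncates at $i = k-1$, so the formula is valid for all $n \geq 0$.

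Next I would observe by a straightforward induction on $i$ that $N^i$ is the matrix whose only nonzero entries are $1$'s on the $i$-th superdiagonal (the entries in position $(j, j+i)$). Consequently, the $(j, j+i)$-entry of $A^n$ equals $\binom{n}{i}$, which matches the shape claimed in the statement with
$$P_i(n) = \binom{n}{i} = \frac{n(n-1)(n-2)\cdots(n-i+1)}{i!}.$$

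Finally, I would read off from this closed form that $P_i$ is a polynomial in $n$ of degree exactly $i$, and that its leading coefficient is $1/i!$, as required. The statement holds for $n \geq 0$; for negative $n$ one may either extend $\binom{n}{i}$ to the polynomial $n(n-1)\cdots(n-i+1)/i!$ in the variable $n$, or check separately that $A^{-1} = I - N + N^2 - \cdots + (-1)^{k-1}N^{k-1}$ and apply the same binomial computation. There is no real obstacle here: the result is a routine consequence of the commutativity of $I$ and $N$ together with the explicit form of powers of the shift matrix.
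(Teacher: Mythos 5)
Your proof is correct, but it takes a different (and arguably cleaner) route than the paper. The paper derives the recurrence $P_{i+1}(n) = P_{i+1}(n-1) + P_i(n-1)$ directly from the matrix identity $A^n = A^{n-1}A$, sums it to get $P_{i+1}(n) = \sum_{j=0}^{n} P_i(j)$, and then invokes the classical fact that $\sum_{j=0}^n j^m$ is a polynomial of degree $m+1$ with leading coefficient $1/(m+1)$, finishing by induction on $i$. You instead write $A = I + N$ with $N$ the nilpotent shift, expand $(I+N)^n$ by the binomial theorem, and read off the closed form $P_i(n) = \binom{n}{i}$ from the observation that $N^i$ is supported on the $i$-th superdiagonal. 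Your approach buys an explicit formula for $P_i$ at essentially no extra cost, making the degree and leading coefficient immediate rather than propagated through an induction; it also handles negative exponents cleanly by extending the binomial polynomial, a point the paper leaves implicit even though the lemma is applied to $g^n$ for all $n\in\Z$. The paper's approach is slightly more hands-on but requires the auxiliary sum-of-powers fact. Both are fine; yours is the more standard and self-contained argument.
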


\begin{proof}
The functions $P_i$ satisfy the equation
$$P_{i+1}(n)=P_{i+1}(n-1)+P_{i}(n-1),$$
therefore one obtains recursively

\begin{equation}
\label{unipotent recurrence}
P_{i+1}(n)=\sum_{j=0}^n P_i(j) \qquad i=0,\ldots , k-2.
\end{equation}

The claim is clear fo $i=0$; reasoning inductively, we may assume that $P_i$ is a polynomial of degree $i$ whose leading coefficient is $1/i!$.

In order to conclude, one needs only recall the well-known fact that for any fixed $m$ the function
$$n \mapsto \sum_{k=0}^n k^m$$
is a polynomial $q_m$ of degree $m+1$ with leading coefficient $1/(m+1)$. This can be proven by writing
$$n^{m+1} = \sum_{k=0}^{n-1} ((k+1)^{m+1} - k^{m+1}) = \sum_{h=0}^{m} \binom{m+1}{h} q_h(n)$$
and by applying induction to $m$.

Therefore, writing
$$P_{i-1}(n)=\frac 1{(i-1)!} n^{i-1} + a_{i-1}n^{i-2} +\ldots + a_0,$$
we have
$$P_i(n)=\frac 1{(i-1)!} q_{i-1}(n) + a_{i-1} q_{i-2}(n) + \ldots + a_0 q_0(n),$$
and the claim follows easily.
\end{proof}

\subsection{Bound on the dimension of non-maximal Jordan blocks}

\begin{prop}
\label{non-maximal Jordan blocks}
Let $X$ be a compact K\"ahler threefold and let $g\colon H^*(X,\R)\to H^*(X,\R)$ be a non-trivial unipotent linear automorphism preserving the cohomology graduation, the Hodge decomposition, the wedge-product and the Poincar\'e duality (properties $(1)-(3)$ in \textsection \ref{first constraints}). Then there exists a unique Jordan block of $g_2$ of maximal dimension $k\leq 5$ (for the eigenvalue $1$); more precisely, all other Jordan blocks have dimension
$\leq \frac{k+1}2$.
\end{prop}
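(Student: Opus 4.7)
The plan is to exploit three wedge products in $H^4(X,\R)$, each constrained by the fact that $\|g_4^n\|$ grows at most polynomially of degree $k-1$: by Poincar\'e duality (property $(3)$), the Jordan block sizes of $g_4$ coincide with those of $g_2$, so the maximal Jordan block of $g_4$ also has dimension $k$ and $\|g_4^n\|\sim c n^{k-1}$.

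Let $u_1,\ldots,u_k$ be a basis of the maximal Jordan block of $g_2$ (with $g(u_1)=u_1$ and $g(u_h)=u_h+u_{h-1}$) and $v_1,\ldots,v_\ell$ a basis of a second block. By Remark \ref{Jordan form Hodge} I may assume these bases are real and that $u_1,v_1\in H^{1,1}(X,\R)\cup(H^{2,0}(X)\oplus H^{0,2}(X))_\R$. The first step is to apply Lemma \ref{iteration unipotent} to expand the three iterates $g^n(u_k\wedge v_\ell)$, $g^n(u_k\wedge u_k)$ and $g^n(v_\ell\wedge v_\ell)$ as polynomials in $n$ valued in $H^4$. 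The crucial observation is that in each expansion the top-degree-in-$n$ contribution is proportional to a single monomial: respectively $u_1\wedge v_1$ with coefficient of order $n^{k+\ell-2}/((k-1)!(\ell-1)!)$, $u_1\wedge u_1$ with coefficient of order $n^{2(k-1)}/((k-1)!)^2$, and $v_1\wedge v_1$ with coefficient of order $n^{2(\ell-1)}/((\ell-1)!)^2$, since the degree $k+\ell-h-m$ of $P_{k-h}P_{\ell-m}$ is maximized uniquely at $(h,m)=(1,1)$ (and analogously for the other two products).

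Assuming now $\ell\geq 2$, the first estimate forces $u_1\wedge v_1=0$ (otherwise $\|g_4^n\|$ would be at least of order $n^{k+\ell-2}>n^{k-1}$); for $k\geq 2$ the second forces $u_1\wedge u_1=0$. I would then apply Corollary \ref{non-null wedge} to the linearly independent pair $\{u_1,v_1\}$: since $u_1\wedge u_1=u_1\wedge v_1=0$, the only remaining possibility is $v_1\wedge v_1\neq 0$. The third estimate then gives $2(\ell-1)\leq k-1$, i.e.\ $\ell\leq (k+1)/2$, which is exactly the bound claimed.

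Uniqueness of the maximal Jordan block is the special case $\ell=k$ of the above argument: the inequality $k\leq (k+1)/2$ forces $k\leq 1$, so two maximal blocks can coexist only in the trivial case $k=1$. I do not expect any genuine obstacle in the execution; the strategy mirrors that of Theorem \ref{thm : unipotent case}, with the key conceptual step being the bootstrapping via the generalized Hodge index theorem: once $u_1\wedge v_1$ and $u_1\wedge u_1$ are killed, $v_1\wedge v_1$ is forced to be non-zero, and the non-maximal block must therefore contribute the full cost $n^{2(\ell-1)}$ to the growth in $H^4$.
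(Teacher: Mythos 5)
Your proof is correct and takes essentially the same approach as the paper: consider the three wedge products $g^n(u_k\wedge v_\ell)$, $g^n(u_k\wedge u_k)$, $g^n(v_\ell\wedge v_\ell)$, invoke Corollary \ref{non-null wedge} to conclude one of $u_1\wedge u_1$, $u_1\wedge v_1$, $v_1\wedge v_1$ is nonzero, and compare the resulting polynomial growth of $\|g_4^n\|$ against the Poincar\'e-duality bound $\|g_4^n\|\sim n^{k-1}$. The only (cosmetic) difference is that you eliminate the first two products one by one to isolate $v_1\wedge v_1\neq 0$, whereas the paper observes directly that each of the three possibilities already forces growth at least $n^{2(\ell-1)}$ since $\ell\leq k$.
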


\begin{proof}
Let $v_1,\ldots , v_k\in H^2(X,\R)$ form a basis for a maximal Jordan block of $g_2$, and let $w_1,\ldots ,w_l\in H^2(X,\R)$ form a Jordan basis for another Jordan block satisfying
$$g(v_1)=v_1, \qquad g(v_i)=v_i+v_{i-1}\quad \text{for }i=2,\ldots, k,$$
$$g(w_1)=w_1, \qquad g(w_j)=w_j+w_{j-1}\quad \text{for }j=2,\ldots, l.$$
By Remark \ref{Jordan form Hodge} we can suppose that $v_i,w_j\in H^{1,1}(X,\R) \cup (H^{2,0}(X)\oplus H^{0,2}(X))_{\R}$ for $i=1,\ldots ,k$ and $j=1,\ldots , l$.\\
We will suppose that $l>1$ (otherwise the claim is evident), and consider the action of $g_4$ on the classes $v_k\wedge v_k, v_k\wedge w_l, w_l\wedge w_l\in H^4(X,\R)$.\\
By Corollary \ref{non-null wedge}, the classes $v_1\wedge v_1, v_1\wedge w_1,w_1\wedge w_1\in H^4(X,\R)$ cannot be all null; since $g^n v_k\sim cn^{k-1}v_1$ and $g^n w_l\sim c'n^{h-1}w_1$, this implies that $\norm{g_4^n}$ grows at least as $c''n^{2(l-1)}$. Since by Poincar\'e duality $\norm{g_2^n}$ and $\norm{g_4^n}$ have the same growth, we get
$$2(l-1)\leq k-1 \qquad \Rightarrow \qquad l\leq \frac{k+1}2,$$
which concludes the proof.
\end{proof}

\subsection{Unipotent examples on complex tori}
\label{unipotent examples}

Examples on complex tori of dimension $3$ show the optimality of Theorem \ref{thm : unipotent case} and Proposition \ref{non-maximal Jordan blocks}. Let $E=\C/\Lambda$ be an elliptic curve, where $\Lambda$ is a lattice of $\C$, and let
$$X:=E\times E \times E=\faktor{\C^3}{\Lambda \times \Lambda \times \Lambda}.$$
Every matrix $M\in \SL_3(\Z)$ acts linearly on $\C^3$ preserving the lattice $\Lambda \times \Lambda \times \Lambda$, and therefore induces an automorphism $f\colon X\to X$. One can easily show that, if $dx,dy,dz$ are holomorphic linear coordinates on the three factors respectively, then the matrix of $f^*_{1,0}$ with respect to the basis $dx,dy,dz$ of $H^{1,0}(X)$ is exactly the transposed $M^T$. Since the wedge product of forms induces an isomorphism
$$H^{1,1}(X)\cong H^{1,0}(X)\otimes H^{0,1}(X),$$
the matrix of $f^*_{1,1}$ with respect to the basis $dx \wedge d\bar x,dx\wedge d\bar y, \ldots ,dz \wedge d\bar z$ is
$$M_{1,1}=M^t\otimes \overline{ M^t}:=(m_{j,i}\overline{ m_{l,k}})_{i,j,k,l=1,2}.$$

\begin{ex}
Let
$$M=
\begin{pmatrix}
1 & 0 & 0\\
1 & 1 & 0\\
0 & 1 & 1
\end{pmatrix}.$$
Then
$$
M_{1,1}=
\begin{pmatrix}
1 & 1 & 0 & 1 & 1 & 0 & 0 & 0 & 0\\
0 & 1 & 1 & 0 & 1 & 1 & 0 & 0 & 0\\
0 & 0 & 1 & 0 & 0 & 1 & 0 & 0 & 0\\
0 & 0 & 0 & 1 & 1 & 0 & 1 & 1 & 0\\
0 & 0 & 0 & 0 & 1 & 1 & 0 & 1 & 1\\
0 & 0 & 0 & 0 & 0 & 1 & 0 & 0 & 1\\
0 & 0 & 0 & 0 & 0 & 0 & 1 & 1 & 0\\
0 & 0 & 0 & 0 & 0 & 0 & 0 & 1 & 1\\
0 & 0 & 0 & 0 & 0 & 0 & 0 & 0 & 1
\end{pmatrix}
$$
$M_{1,1}$ is unipotent and its Jordan blocks have dimension $1,3$ and $5$.
\end{ex}

\begin{ex}
Let
$$M=
\begin{pmatrix}
1 & 0 & 0\\
1 & 1 & 0\\
0 & 0 & 1
\end{pmatrix}.$$
Then
$$
M_{1,1}=
\begin{pmatrix}
1 & 1 & 0 & 1 & 1 & 0 & 0 & 0 & 0\\
0 & 1 & 0 & 0 & 1 & 0 & 0 & 0 & 0\\
0 & 0 & 1 & 0 & 0 & 1 & 0 & 0 & 0\\
0 & 0 & 0 & 1 & 1 & 0 & 1 & 1 & 0\\
0 & 0 & 0 & 0 & 1 & 0 & 0 & 1 & 0\\
0 & 0 & 0 & 0 & 0 & 1 & 0 & 0 & 1\\
0 & 0 & 0 & 0 & 0 & 0 & 1 & 1 & 0\\
0 & 0 & 0 & 0 & 0 & 0 & 0 & 1 & 0\\
0 & 0 & 0 & 0 & 0 & 0 & 0 & 0 & 1
\end{pmatrix}.
$$
$M_{1,1}$ is unipotent and its Jordan blocks have dimension $2,2,2$ and $3$.
\end{ex}

\section{Automorphisms of threefolds: the semisimple case, proof of Theorem \ref{thm:moduli eigenvalues}}
\label{section semisimple thm B}

Throughout this section let $X$ be a compact K\"ahler threefold and let
$$g\colon H^*(X,\R)\to H^*(X,\R)$$
be a \emph{semisimple} linear automorphism preserving the cohomology graduation, the Hodge decomposition, the wedge-product and Poincar\'e's duality (properties $(1)-(3)$ in \textsection \ref{first constraints}).

If $f\colon X\to X$ is an automorphism and
$$f^*=g_ug_s=g_sg_u$$
is the Jordan decomposition of $f^*$, then by Lemma \ref{unipotent semisimple algebraic} the semisimple part $g=g_s$ satisfies the assumptions above.

Let $\lambda_1=\lambda_1(g)$ and $\lambda_2=\lambda_2(g)$ be the dynamical degrees of $g$, i.e. the spectral radii of $g_2$ and $g_4$ respectively,  and let $\Lambda$ be the spectrum of $g_2$, i.e. the set of complex eigenvalues of $g_2$ with multiplicities; we will say that two elements $\lambda,\lambda'\in \Lambda$ are distinct if either $\lambda\neq \lambda'$ or $\lambda=\lambda'$ is an eigenvalue with multiplicity $\geq 2$.

The main ingredient of the proofs in the rest of this section is the following lemma.

\begin{lemma}
\label{key lemma}
Let $g\in \GL(H^*(X,\R))$ be a semisimple linear automorphism  preserving the cohomology graduation, the Hodge decomposition, the wedge-product and Poincar\'e's duality and let $\Lambda$ be its spectrum (with multiplicities taken into account).\\
If $\lambda,\lambda'\in \Lambda$ are distinct elements, then
$$\left\{\frac 1{|\lambda|^2},\frac 1{\lambda\bar\lambda'}, \frac 1{|\lambda'|^2}\right\} \cap \Lambda \neq \emptyset.$$
\end{lemma}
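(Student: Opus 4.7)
The plan is to produce, from eigenvectors attached to $\lambda$ and $\lambda'$, a nonzero class in $H^4(X,\C)$ whose $g_4$-eigenvalue is one of $|\lambda|^2,\lambda\bar{\lambda'},|\lambda'|^2$, and then translate this to $g_2$ via Poincar\'e duality.

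Since $g$ is semisimple and preserves the Hodge decomposition, Remark \ref{Jordan form Hodge} lets me choose an eigenvector $u\in H^{p,q}(X)$ for the eigenvalue $\lambda$ and an eigenvector $v\in H^{p',q'}(X)$ for $\lambda'$, with $p+q=p'+q'=2$. Interpreting ``distinct'' eigenvalues with multiplicity in the natural way, I can arrange that $u$ and $v$ are linearly independent in $H^2(X,\C)$ (if $\lambda\neq\lambda'$ this is automatic; if $\lambda=\lambda'$ occurs with multiplicity $\geq 2$, choose two independent eigenvectors). Each of $u$ and $v$ then lies in $H^{1,1}(X)\cup\bigl(H^{2,0}(X)\oplus H^{0,2}(X)\bigr)$.

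Now I apply Corollary \ref{non-null wedge} to the pair $u,v$: at least one of the classes $u\wedge\bar u,\ u\wedge\bar v,\ v\wedge\bar v\in H^4(X,\C)$ is nonzero. Because $g$ is defined over $\R$ it commutes with complex conjugation on $H^*(X,\C)$, and because it preserves the wedge product I get
\[
g(u\wedge\bar u)=|\lambda|^2\, u\wedge\bar u,\quad g(u\wedge\bar v)=\lambda\bar{\lambda'}\, u\wedge\bar v,\quad g(v\wedge\bar v)=|\lambda'|^2\, v\wedge\bar v.
\]
Hence at least one of $|\lambda|^2,\lambda\bar{\lambda'},|\lambda'|^2$ is an eigenvalue of $g_4$.

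Finally, Poincar\'e duality (property $(3)$ in \textsection\ref{first constraints}) yields the identification $g_4=(g_2\inv)^\vee$, so the spectrum of $g_4$ is exactly $\{1/\mu:\mu\in\Lambda\}$. Inverting the eigenvalue produced in the previous paragraph shows that one of $1/|\lambda|^2,\ 1/(\lambda\bar{\lambda'}),\ 1/|\lambda'|^2$ belongs to $\Lambda$, which is the desired conclusion. There is no serious obstacle here: the argument is a direct pairing of the generalized Hodge index theorem (to guarantee a nonzero wedge in $H^4$) with Poincar\'e duality (to dualize the resulting eigenvalue back to $g_2$).
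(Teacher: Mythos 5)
Your argument is correct and follows exactly the paper's proof: pick Hodge-type eigenvectors via Remark \ref{Jordan form Hodge}, use Corollary \ref{non-null wedge} to get a nonzero wedge and hence an eigenvalue of $g_4$ among $|\lambda|^2,\lambda\bar{\lambda'},|\lambda'|^2$, and dualize via $g_4=(g_2\inv)^\vee$. The only additions (explicitly arranging linear independence of the eigenvectors in the multiplicity case, and noting that $g$ commutes with complex conjugation) are details the paper leaves implicit, so there is nothing to distinguish the two proofs.
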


\begin{proof}
By Remark \ref{Jordan form Hodge}, we may pick eigenvectors $v,v'\in H^{2,0}(X) \cup H^{1,1}(X) \cup H^{0,2}(X)$ for the eigenvalues $\lambda,\lambda'$ respectively. By Corollary \ref{non-null wedge}, the wedge products
$$v\wedge \bar v, v\wedge \bar v', v'\wedge \bar v'\in H^4(X,\C)$$
cannot all be null. A non-null wedge product gives rise to an eigenvector for $g_4$; in particular, denoting by $\Lambda_4$ the spectrum of $g_4$, we have
$$\left\{{|\lambda|^2},{\lambda\bar\lambda'}, {|\lambda'|^2}\right\} \cap \Lambda_4 \neq \emptyset.$$
Now, by assumption (3) $g_4$ can be identified with $(g_2\inv)^\vee$, and in particular
$$\Lambda=\Lambda_4\inv=\{\lambda\inv \, ,\, \lambda\in \Lambda_4\}.$$
This concludes the proof.
\end{proof}

Before passing to the actual proof of Theorem \ref{thm:moduli eigenvalues} let us outline the strategy we will adopt.
\begin{enumerate}
\item In \textsection \ref{split rank} we will study the number $r(g)$ of multiplicative parameters which describe the moduli of eigenvalues of $g_2$; this can be formally defined as the split-rank of the real algebraic group $G$ generated by $g_2$.
\item In \textsection \ref{section: weights} we define the \emph{weights} of $g$ (or rather of $g_2$) as the real characters
$$w_\lambda \colon G \to \R^*$$
such that $g \mapsto |\lambda|$ for any eigenvalue $\lambda$; for the sake of clarity we will adopt an additive notation on weights.\\
Using an immediate consequence of Lemma \ref{key lemma} (Lemma \ref{key lemma weights}), in Lemma \ref{bound split rank} we prove that
$$r(g)\leq 2.$$
\item We conclude by considering the cases $r(g)=2$ and $r(g)=1$ separately (\textsection \ref{section thm B r=2} and \textsection \ref{section thm B r=1} respectively). In both cases the proof relies essentially on Lemma \ref{key lemma weights}.
\end{enumerate}

\subsection{Structure of the  algebraic group generated by $g_2$}
\label{split rank}

For the content of this Section we refer to \cite[\textsection 8]{MR1102012}. Let $g$ be as above and let
$$G=\overline{\langle g_2 \rangle}^{Zar} \leq \GL(H^2(X,\R))$$
be the Zariski-closure of the group generated by $g$; it is a real algebraic group by \cite[Proposition I.1.3]{MR1102012}; furthermore, since $\langle g \rangle$ is diagonalizable over $\C$ and commutative, so is $G$.\\
The Zariski-connected component of the identity $G_0$ of $G$ is thus a real algebraic torus; we define $G_d\leq G_0$ as the subgroup generated by real one-parameter subgroups of $G_0$, and $G_a\leq G_0$ as the intersection of the kernels of real characters of $G_0$. Then we have the following classical result:

\begin{prop}
Let $G$ be as above; then
\begin{enumerate}
\item $G_d\cong (\R^*)^r$ is the maximal split subtorus;
\item $G_a \cong (S^1)^s$ is the maximal anisotropic subtorus;
\item the product morphism $G_d\times G_a \to G_0$ is an isogeny (i.e. it is surjective and with finite kernel). 
\end{enumerate}
\end{prop}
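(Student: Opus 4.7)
The plan is to verify that $G_0$ is a commutative connected real algebraic torus and then invoke the standard Galois-descent description of its decomposition into split and anisotropic parts. The whole statement is well-documented in \cite{MR1102012}, but I will outline how I would reprove it.

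First I would check that $G_0$ is a real algebraic torus. Since $g_2$ is semisimple, every iterate $g_2^n$ is semisimple, and these operators mutually commute. The Zariski closure in $\GL(H^2(X,\R))$ of a commuting family of semisimple operators is commutative and still consists entirely of semisimple operators (semisimplicity being preserved in the Zariski closure of a commutative family). Hence $G$, and in particular its Zariski-connected identity component $G_0$, is a commutative connected linear algebraic group whose elements are all semisimple, i.e.\ a real algebraic torus.

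Next I would carry out the Galois-descent. After base change to $\C$ one obtains an isomorphism $G_0 \otimes_\R \C \cong (\C^*)^n$ with $n = \dim_\R G_0$, and the real structure is encoded by an action of complex conjugation on the character lattice $M := X^*(G_0 \otimes \C) \cong \Z^n$ via a $\Z$-linear involution $\sigma$. Rationally decompose $M_\Q := M \otimes \Q = M_\Q^+ \oplus M_\Q^-$ into the $\pm 1$-eigenspaces of $\sigma$, of dimensions $r$ and $s$ with $r+s=n$. The $\Q$-subtorus corresponding to the quotient $M_\Q/M_\Q^-$ has every character defined over $\R$, hence is split, and I would identify it with $G_d$: indeed, any real one-parameter subgroup of $G_0$ lands in the $\sigma$-fixed part. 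Dually, the subtorus corresponding to $M_\Q/M_\Q^+$ has no nontrivial real characters and is thus anisotropic, which matches the definition of $G_a$ as the intersection of kernels of real characters. A direct computation of real points then yields $G_d(\R) \cong (\R^*)^r$ and $G_a(\R) \cong (S^1)^s$, and the maximality of each as a split, respectively anisotropic, subtorus is immediate from the eigenspace description.

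Finally, for the isogeny assertion, I would look at Lie algebras: the involution on $\mathrm{Lie}(G_0 \otimes \C)$ dual to $\sigma$ splits that complex vector space into its $\pm 1$-eigenspaces, and this decomposition descends to an $\R$-linear direct-sum decomposition $\mathrm{Lie}(G_0) = \mathrm{Lie}(G_d) \oplus \mathrm{Lie}(G_a)$. Hence the product morphism $G_d \times G_a \to G_0$ has bijective differential, so it is \'etale and surjective; its kernel $G_d \cap G_a$ is therefore a finite subgroup. The main subtlety I expect is precisely the reason one gets an isogeny rather than an isomorphism: while the decomposition $M_\Q = M_\Q^+ \oplus M_\Q^-$ is honest over $\Q$, the sublattices $M^\pm := M \cap M_\Q^\pm$ in general satisfy $M^+ \oplus M^- \subsetneq M$ with finite index, and this finite cokernel is exactly what appears as the kernel $G_d \cap G_a$.
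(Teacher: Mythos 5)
Your proof is correct. Note, however, that the paper does not actually supply a proof of this proposition; it labels it a ``classical result'' and refers the reader to Borel's book \cite{MR1102012}, \S 8. Your Galois-descent argument is essentially the standard proof that appears in that reference: realize $G_0$ as a real torus, pass to the character lattice $M$ with its $\Gal(\C/\R)$-involution $\sigma$, split $M_\Q$ into $\pm$-eigenspaces, and read off the split and anisotropic subtori with the isogeny coming from the finite index of $M^+ \oplus M^-$ in $M$. Two small remarks: the preliminary claim that every element of $G$ is semisimple is best justified not by a vague ``preserved in the Zariski closure'' but by the Jordan decomposition for commutative algebraic groups (Theorem \ref{Jordan decomposition} in the paper), which makes the semisimple locus $G_s$ a closed subgroup containing the dense cyclic group $\langle g_2 \rangle$ and hence all of $G$; and the identification of the $+1$-part with the subgroup generated by real one-parameter subgroups is cleanest via the dual observation that $\R$-cocharacters of $G_0$ are exactly the $\sigma$-invariant cocharacters, which pair with $M/M^-$. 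Neither point is a gap; both are just places where the argument deserves to be pinned down when written in full.
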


The  number $r \geq 0$ is the (real) split-rank of $G$; we will denote it by $r(g)$ and call it the \emph{rank of $g$}; informally, $r(g)$ (respectively $s(g)$) is the number of multiplicative parameters which are necessary to describe the moduli (respectively, the arguments) of the complex eigenvalues of $g_2$  (see Lemma \ref{split rank weights}).

\subsection{Weights of $g$}
\label{section: weights}

Let $\lambda \in \Lambda$ be a complex eigenvalue of $g_2$; then the group homomorphism
\begin{align*}
\langle g \rangle & \to \R^*\\
g^n & \mapsto |\lambda|^n
\end{align*}
is algebraic, and therefore can be extended to a non-trivial  real character of $G$. Upon restriction to $G_0$ and pull-back to $G_d \times G_a \cong (\R^*)^r \times (S^1)^s$, this yields a non-trivial morphism of real algebraic groups
$$\rho_\lambda\colon (\R^*)^r \times (S^1)^s \to \R^*.$$
Since all morphisms of real algebraic groups $S^1\to \R^*$ are trivial, we have
$$\rho_\lambda(x_1\ldots ,x_r,\theta_1,\ldots, \theta_s)=x_1^{m_1(\lambda)}\cdots x_r^{m_r(\lambda)}, \qquad m_i\in \Z.$$
For the sake of simplicity, we will adopt an additive notation, so that the character $\rho_\lambda$ is identified with the vector $w_\lambda=(m_1(\lambda),\ldots, m_r(\lambda))\in \R^r$.

\begin{dfn}
The \emph{weight} of the eigenvalue $\lambda\in \Lambda$ of $g_2$ is the vector $w_\lambda=(m_1(\lambda),\ldots ,m_r(\lambda))\in \R^r$. We denote by $W$ the set of all weights of eigenvalues of $g_2$ with multiplicities; as for the elements of $\Lambda$, we say that two elements $w,w'$ of $W$ are distinct if either $w\neq w'$ or $w=w'$ has multiplicity $>1$.
\end{dfn}

\begin{rem}
Remark that $w_\lambda=w_{\bar \lambda}$. Therefore, if $\lambda$ is a non-real eigenvalue of $g_2$, the weight $w_\lambda$ will be counted twice, once for $\lambda$ and once for $\bar \lambda$.
\end{rem}

We say that a weight $w_0\in W$ is \emph{maximal} for a linear functional $\alpha \in (\R^r)^\vee$ if $|\alpha(w_0)|=\max _{w\in W}|\alpha(w)|$; we simply say that $w$ is maximal if it is maximal for some linear functional. The maximal weights are exactly those belonging to the boundary of the convex hull of $W$ in $\R^r$.

\begin{lemma}
\label{split rank weights}
The weights of $g_2$ span a real vector space of dimension $r(g)$.
\end{lemma}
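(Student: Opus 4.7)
The plan is to translate the statement into the faithfulness of $G$ on $H^2(X,\R)$, using the simultaneous diagonalization provided by the semisimplicity of $g_2$.

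By construction $r(g)$ equals the dimension of $G_d$, equivalently the rank of the character lattice $X^*(G_d^0)$ of the Euclidean identity component $G_d^0 \cong (\R_{>0})^{r(g)}$; the weights $w_\lambda$ lie in $X^*(G_d^0) \otimes \R \cong \R^{r(g)}$. First I would unpack the identification: since $g_2$ is semisimple and $G$ is commutative, $G$ acts on $H^2(X,\C)$ by a direct sum of joint eigencharacters,
$$H^2(X,\C)=\bigoplus_\chi V_\chi \qquad (\chi\colon G\to \C^*),$$
with $\Lambda=\{\chi(g_2)\}_\chi$ counted with multiplicity $\dim V_\chi$. Because $G_d^0$ is connected and lands in $\C^*$ through $\chi$, each $\chi|_{G_d^0}$ takes values in $\R_{>0}$ and therefore coincides with the real positive character $|\chi|\,|_{G_d^0}$; in additive notation this is precisely the weight $w_{\chi(g_2)}$.

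Second, I would argue by contradiction. Suppose the real span $W$ of the weights were a proper subspace of $X^*(G_d^0)\otimes \R$. Then the common zero locus of the weights in $G_d^0$ --- a closed real Lie subgroup --- would have positive dimension and contain a nontrivial continuous one-parameter subgroup $\gamma\colon \R \to G_d^0$. By the above identification, $\chi(\gamma(t))=1$ for every $t$ and every character $\chi$ appearing in the decomposition, so $\gamma(t)$ acts trivially on each $V_\chi$, hence on all of $H^2(X,\C)$. But $G_d^0 \subseteq G \subseteq \GL(H^2(X,\R))$ acts faithfully by the very definition of $G$, forcing $\gamma$ to be constant. This contradiction shows $W=X^*(G_d^0)\otimes \R$, and the weights span an $\R$-vector space of dimension exactly $r(g)$.

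The main obstacle is not conceptual but notational: one must carefully distinguish $G_d$ from its Euclidean identity component $G_d^0$ and verify that the weight $w_\lambda$, defined in the text via the pull-back to $(\R^*)^r\times (S^1)^s$, does coincide with the restriction $\chi_\lambda|_{G_d^0}$. Once this bookkeeping is settled, the faithfulness of $G$ delivers the conclusion in a single line.
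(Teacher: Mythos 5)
Your proof reaches the correct conclusion but by a genuinely different route from the paper's. The paper's argument is forward and computational: it puts $g$ into real block-diagonal form, constructs explicit multiplicative coordinates $(x_i,y_j)$ on a complex torus containing $G_\C$ (with the $x_i$ tracking squared moduli and the $y_j$ tracking arguments), and checks that any complex character of $G_\C$ which is real-valued on $G_0$ must coincide, up to a sign, with a monomial in the $x_i$'s alone --- hence exactly with such a monomial once connectedness is invoked. Your argument is backward and structural: you decompose $H^2(X,\C)$ into joint eigenspaces for characters $\chi$ of $G$, identify $\chi|_{G_d^0}$ with the corresponding weight character $\rho_\lambda|_{G_d^0}$, and argue by contradiction that if the weights failed to span $\R^{r(g)}$ then a nontrivial continuous one-parameter subgroup of $G_d^0$ would lie in the common kernel of all weights, act trivially on every eigenspace of $H^2(X,\C)$, and thus contradict the faithfulness of $G\leq \GL(H^2(X,\R))$, which holds by the very definition of $G$. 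Your version is shorter and conceptually cleaner; the paper's carries the side benefit of exhibiting the explicit coordinates it tacitly relies on for the distinction between moduli and arguments.

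One step needs tightening. You justify the positivity of $\chi|_{G_d^0}$ by the connectedness of $G_d^0$ alone, but connectedness by itself is insufficient: a connected group can land in $S^1\subset\C^*$. The correct justification is that $G_d$ is a \emph{split} real torus, so every algebraic character of $G_d$ is a monomial defined over $\R$ and hence real-valued on $G_d(\R)\cong(\R^*)^r$; connectedness of $G_d^0\cong(\R_{>0})^r$ together with $\chi(\id)=1$ then forces $\chi|_{G_d^0}$ into $\R_{>0}$. With this filled in, the identification $\chi|_{G_d^0}=\rho_\lambda|_{G_d^0}$ is clean and the faithfulness argument completes the proof.
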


\begin{proof}
Remark first that $r(g)$ is equal to the rank over $\Z$ of the group of real characters
$$G_0 \to \R^*.$$
Therefore, in order to prove the claim one only needs to show that any real character of $G_0$ can be written as a product of the $\rho_\lambda$. Of course, one can check such property on $g$ and its iterates.

Since $g$ is semisimple, one can find a real basis of $V=H^*(X,\R)$ in which $g$ is written as
$$g=
\begin{pmatrix}
\lambda_1 R_{\theta_1} &&&&&& \\
& \lambda_2 R_{\theta_2}&&&&&\\
&& \ddots &&&&\\
&&& \lambda_k R_{\theta_k}&&&\\
&&&& \lambda_{k+1} &&\\
&&&&& \ddots &\\
&&&&&& \lambda_n
\end{pmatrix},$$
where $R_\theta$ denotes the rotation matrix for the angle $\theta$ and $\lambda_i \in \R^*$; actually, after possibly replacing $g$ by $g^2$ (which doesn't change $G_0$) we may assume that $\lambda_i \in \R^+$.

Now let $G_\C$ be the complexification of $G_0$; then, upon diagonalizing $g$ in $\GL(H^*(X,\C))$, we can naturally see $G_\C$ as a subgroup of a complex torus
$$(\C^*)^{n+k},$$
where the first $n$ coordinates correspond to the $\lambda_i$'s and the last $k$ correspond to the rotations $R_{\theta_j}$. More explicitly, considering the above base of $H^*(X,\R)$ and the corresponding matrix coordinates $a_{i,j}$ of $\GL(H^*(X,\R))$, define
$$x_i=a_{2i-1,2i-1}a_{2i,2i} - a_{2i,2i-1}a_{2i-1,2i} \qquad i=1,\ldots , k$$
$$x_i=a_{i+k,i+k}^2 \qquad i=k+1,\ldots , n$$
$$y_j=(a_{2j-1,2j-1} + i a_{2j,2j-1}) /x_j \qquad j=1,\ldots , k.$$
Then it is not hard to see that the $x_i$'s and the $y_j$'s are multiplicative coordinates of a complex torus $D_{n+k}$ containing $G_\C$.\\
The $(x,y)$ coordinates of the element $g$ are
$$x_i=\lambda_i^2 \quad i=1,\ldots ,n, \qquad y_j=e^{i\theta_j} \quad j=1,\ldots ,k.$$

Now, every complex character
$$G_\C \to \C^*$$
can be written as the restriction to $G_\C$ of a product
$$\chi \colon(x_1, \ldots ,x_n, y_1,\ldots , y_k) \mapsto x_1^{A_1} \cdots x_n^{A_n} y_1^{B_1} \cdots y_k^{B_k}.$$
In order for $\chi$ to be real in restriction to $G_0$, we need to have
$$\chi (\lambda_1^2,\ldots ,\lambda_n^2, e^{i\theta_1}, \ldots, e^{i\theta_k})\in \R.$$
This implies that
$$\sum_{j=1}^k B_j\theta_j \in \pi \Z,$$
meaning that, in restriction to $g$ and its iterates, $\chi$ coincides, maybe up to a sign, with
$$\chi'\colon (x_1, \ldots ,x_n, y_1,\ldots , y_k) \mapsto x_1^{A_1} \cdots x_n^{A_n}.$$
However, two real characters on a connected group which can at most differ by a sign are equal, thus $\chi_{|G_{0}}=\chi'_{|G_0}$. By the initial remark, this proves the claim.
\end{proof}

\begin{lemma}
\label{adapted basis W}
There exist a basis $w_1,\ldots ,w_r$ of $\R^r$ and a basis $\alpha_1,\ldots ,\alpha_r$ of $(\R^r)^\vee$ such that
\begin{enumerate}
\item the $w_i$ belong to $W$;
\item $w_i$ is $\alpha_i$-maximal for all $i$;
\item if $i>j$, then $\alpha_i(w_j)=0$.
\end{enumerate}
\end{lemma}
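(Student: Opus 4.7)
The plan is to build the two bases iteratively, in a Gram--Schmidt-like fashion: at step $k$ I pick a nonzero functional $\alpha_k\in (\R^r)^{\vee}$ that annihilates the previously chosen weights $w_1,\ldots,w_{k-1}$, and I take $w_k\in W$ to be an element at which $|\alpha_k(\cdot)|$ attains its maximum on $W$. The whole content of the lemma is that the spanning property of $W$ in $\R^r$ (Lemma \ref{split rank weights}), together with the finiteness of $W$ (which is the spectrum of the finite-dimensional operator $g_2$ together with multiplicities), makes this iteration run for exactly $r$ steps.

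More concretely, assume inductively that $w_1,\ldots,w_k\in W$ are linearly independent, $\alpha_1,\ldots,\alpha_k\in(\R^r)^{\vee}$ are linearly independent, $w_i$ is $\alpha_i$-maximal with $\alpha_i(w_i)\neq 0$, and $\alpha_i(w_j)=0$ whenever $i>j$. Set $V_k=\mathrm{span}(w_1,\ldots,w_k)$; since $k<r$ it is a proper subspace of $\R^r$, so its annihilator $V_k^{\perp}\subset(\R^r)^{\vee}$ has dimension $r-k\geq 1$. Any nonzero $\alpha\in V_k^{\perp}$ cannot vanish identically on $W$: otherwise $W$ would be contained in $\ker\alpha$, contradicting the fact that $W$ spans $\R^r$. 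I therefore pick any nonzero $\alpha_{k+1}\in V_k^{\perp}$ and let $w_{k+1}\in W$ realize the maximum of $|\alpha_{k+1}(\cdot)|$ on the finite set $W$. By construction $\alpha_{k+1}(w_j)=0$ for $j\leq k$, while $\alpha_{k+1}(w_{k+1})\neq 0$ by the previous observation; in particular $w_{k+1}\notin V_k$, so $w_1,\ldots,w_{k+1}$ stay linearly independent.

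After $r$ steps the $w_i$ form a basis of $\R^r$; linear independence of the $\alpha_i$'s comes for free from the triangular structure, since a relation $\sum_{i=1}^r c_i\alpha_i=0$ evaluated at $w_j$ reduces to $\sum_{i\leq j}c_i\alpha_i(w_j)=0$, and feeding in $j=1,2,\ldots,r$ successively together with $\alpha_j(w_j)\neq 0$ forces $c_1=\cdots=c_r=0$. The construction is almost entirely formal; the single point one has to be careful about is to ensure at each step that the annihilator $V_k^{\perp}$ contains a functional nontrivial on $W$, which is exactly where the spanning property guaranteed by Lemma \ref{split rank weights} is used.
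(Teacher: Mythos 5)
Your proof is correct and takes the same inductive route as the paper: at each step pick a nonzero functional in the annihilator of the already-chosen weights, then take a weight maximal for it, using Lemma \ref{split rank weights} to guarantee the functional is nontrivial on $W$. You supply a bit more detail than the paper does (the explicit dimension count for $V_k^\perp$ and the triangular argument for independence of the $\alpha_i$), but the idea is identical.
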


\begin{proof}
First, by Lemma \ref{split rank weights} the elements of $W$ span the vector space $\R^r$.

We construct the adapted basis inductively. Since $W$ is finite, there exists a maximal weight, say $w_1$, for a functional $\alpha_1$.\\
Now, suppose that $w_1,\ldots, w_k\in W\subset \R^r$ and $\alpha_1,\ldots ,\alpha_k\in (\R^r)^\vee$ are linearly independent and satisfy properties $(1)-(3)$. Pick any 
$$\alpha_{k+1}\in \{\alpha\in (\R^r)^\vee\, |\, \alpha(w_1)=\ldots= \alpha(w_k)=0 \}\setminus \{0\}\subset (\R^r)^\vee,$$
and let $w_{k+1}\in W$ be $\alpha_{k+1}$-maximal. By the condition on $\alpha_{k+1}$, $w_{k+1}$ does not belong to the span of $w_1,\ldots, w_k$. This completes the proof by induction.
\end{proof}

In the language of weights, Lemma \ref{key lemma} becomes the following:

\begin{lemma}
\label{key lemma weights}
Let $w,w'\in W$ be distinct elements; then
$$\{-2w,-w-w',-2w'\} \cap W \neq \emptyset.$$
If furthermore $w=w_\lambda$ is the weight of an eigenvalue $\lambda$ of $f^*_{2,0}$ or $f^*_{0,2}$, then $-2w\in W$.
\end{lemma}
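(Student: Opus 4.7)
The plan is to translate Lemma \ref{key lemma} directly into the language of weights; all the real work has been done in that lemma, and the remaining task is essentially to verify that the dictionary $\lambda \leftrightarrow w_\lambda$ behaves correctly under multiplication, conjugation and inversion.  Concretely, by construction the character $\rho_\lambda \colon G \to \R^*$ satisfies $\rho_\lambda(g^n) = |\lambda|^n$, so the assignment $\lambda \mapsto w_\lambda$ is additive in the multiplicative sense: if $\lambda, \mu$ and $\lambda\mu$ all occur as eigenvalues of $g$ on some $H^k(X)$ then $w_{\lambda\mu}=w_\lambda+w_\mu$, and similarly $w_{1/\lambda}=-w_\lambda$.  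In particular $1/|\lambda|^2 = 1/(\lambda\bar\lambda)$ has weight $-2w_\lambda$ (using also $w_{\bar\lambda}=w_\lambda$, which is noted in the text), and $1/(\lambda\bar\lambda')$ has weight $-w_\lambda-w_{\lambda'}$.

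First I would prove the main assertion.  Given distinct $w,w' \in W$, the definition of distinctness in $W$ allows us to choose distinct eigenvalues $\lambda,\lambda' \in \Lambda$ of $g_2$ with $w_\lambda = w$ and $w_{\lambda'} = w'$ (if $w\neq w'$ any eigenvalues realizing them work; if $w=w'$ with multiplicity $\geq 2$ we use the fact that the weight of an eigenvalue depends only on its modulus, so we can exploit the multiplicity to pick two distinct eigenvalues).  Lemma \ref{key lemma} then gives that at least one of $1/|\lambda|^2,\ 1/(\lambda\bar\lambda'),\ 1/|\lambda'|^2$ lies in $\Lambda$, and taking weights yields that at least one of $-2w,\ -w-w',\ -2w'$ lies in $W$, as required.

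For the ``furthermore'' clause, suppose $w = w_\lambda$ with $\lambda$ an eigenvalue of $f^*_{2,0}$ or $f^*_{0,2}$ (I am following the notation of the statement, though really $g$ plays the role of $f^*$ here).  Pick an eigenvector $v \in H^{2,0}(X)\cup H^{0,2}(X)$.  The second half of Corollary \ref{non-null wedge} tells us $v\wedge\bar v \neq 0$, so $|\lambda|^2 = \lambda\bar\lambda$ is an eigenvalue of $g_4$.  By property (3) of \textsection \ref{first constraints} (Poincar\'e duality), $g_4$ is identified with $(g_2^{-1})^\vee$, so $1/|\lambda|^2 \in \Lambda$, whose weight is $-2w_\lambda = -2w$; hence $-2w \in W$.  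I do not anticipate any real obstacle: the proof is entirely bookkeeping once the additivity of weights is recorded, and it is really just Lemma \ref{key lemma} repackaged, the only subtle point being to keep track of the multiplicity convention when passing between $\Lambda$ and $W$.
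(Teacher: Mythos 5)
Your proof is correct and matches the paper's intent exactly: the paper itself offers no proof of this lemma, introducing it merely with ``In the language of weights, Lemma \ref{key lemma} becomes the following,'' and you supply precisely the translation the author left implicit, including the correct use of the second half of Corollary \ref{non-null wedge} (the case $u\in H^{2,0}(X)\oplus H^{0,2}(X)$ gives $u\wedge\bar u\neq 0$) together with the identification $g_4=(g_2^{-1})^\vee$ from property (3) for the ``furthermore'' clause.

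One small remark on your bookkeeping: when $w=w'$ has multiplicity $\geq 2$ in $W$, the reason you can pick distinct $\lambda,\lambda'\in\Lambda$ is not really that ``the weight depends only on the modulus,'' but simply that multiplicity $\geq 2$ of $w$ in $W$ means, by definition, that at least two eigenvalues of $g_2$ (counted with multiplicity in $\Lambda$) have weight $w$; whether those are two different complex numbers or one eigenvalue of multiplicity $\geq 2$, they are ``distinct'' in the paper's convention for $\Lambda$, which is exactly the hypothesis needed to invoke Lemma \ref{key lemma}. This is a cosmetic adjustment, not a gap.
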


\begin{rem}
\label{maximal simple}
If $\lambda\in \Lambda$ has maximal weight, then $|\lambda|^{-2}\notin \Lambda$ (and in particular the weight $w_\lambda\in W$ is simple by Lemma \ref{key lemma}); therefore, if $w,w'$ are maximal weights, by Lemma \ref{key lemma weights}, $-w-w'\in W$.
\end{rem}

As a preliminary result, we bound the rank of $g$:

\begin{lemma}
\label{bound split rank}
The rank of $g$ satisfies $r(g)\leq 2$.
\end{lemma}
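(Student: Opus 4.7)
The plan is to argue by contradiction: suppose $r(g)\geq 3$ and exhibit a weight whose value on a chosen functional strictly exceeds the postulated maximum. First I would invoke Lemma \ref{adapted basis W} to produce three linearly independent weights $w_1,w_2,w_3\in W$ together with dual functionals $\alpha_1,\alpha_2,\alpha_3$ such that $w_i$ is $\alpha_i$-maximal and $\alpha_i(w_j)=0$ whenever $i>j$. In particular $\alpha_3$ annihilates both $w_1$ and $w_2$ while $\alpha_3(w_3)\neq 0$, so $|\alpha_3(w_3)|$ is the global maximum of $|\alpha_3|$ on $W$.

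Next, I would feed Lemma \ref{key lemma weights} each pair $(w_i,w_j)$ with $i\neq j$. Since every $w_i$ is itself a maximal weight, Remark \ref{maximal simple} rules out $-2w_i\in W$, so the lemma forces
$$v_{ij}:=-w_i-w_j\in W \qquad \text{for all } i\neq j.$$
The crucial observation is that the two weights $v_{13}$ and $v_{23}$ are distinct elements of $W$ (otherwise $w_1=w_2$, contradicting linear independence) and both satisfy $\alpha_3(v_{i3})=-\alpha_3(w_3)$; in particular each of them is also $\alpha_3$-maximal, so Remark \ref{maximal simple} again precludes $-2v_{13},-2v_{23}\in W$.

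The punchline is one final application of Lemma \ref{key lemma weights}, this time to the distinct pair $(v_{13},v_{23})$: the only remaining candidate for an element of $W$ is $-v_{13}-v_{23}=w_1+w_2+2w_3$, which satisfies
$$\alpha_3(w_1+w_2+2w_3)=2\alpha_3(w_3),$$
strictly exceeding the supposed maximum $|\alpha_3(w_3)|$. This contradicts the $\alpha_3$-maximality of $w_3$ and yields $r(g)\leq 2$.

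The main subtlety is selecting the pair $(v_{13},v_{23})$ rather than, say, $(v_{12},v_{13})$: only this choice fully exploits the triangular vanishing $\alpha_3(w_1)=\alpha_3(w_2)=0$ supplied by Lemma \ref{adapted basis W}, ensuring that both components remain $\alpha_3$-maximal and that their negatives' sum doubles $\alpha_3(w_3)$. Beyond this tactical choice the argument is formal, and the analogous strategy would not work if one tried to rule out $r(g)\leq 1$, which explains why the sharp bound is $2$.
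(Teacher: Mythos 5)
Your proof is correct and follows essentially the same route as the paper's: build the adapted basis from Lemma \ref{adapted basis W}, use maximality of $w_1,w_2,w_3$ together with Lemma \ref{key lemma weights} to force $-w_1-w_3,\,-w_2-w_3\in W$, observe these are $\alpha_3$-maximal, and then apply the same lemma once more to obtain $w_1+w_2+2w_3\in W$, contradicting the $\alpha_3$-maximality of $w_3$. Your explicit check that $v_{13}\neq v_{23}$ (needed to invoke Lemma \ref{key lemma weights}) is a small detail the paper leaves implicit, but otherwise the arguments coincide.
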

\begin{proof}
Let us fix bases $w_1,\ldots ,w_r$ and $\alpha_1,\ldots ,\alpha_r$ of $\R^r$ and $(\R^r)^\vee$ respectively as in Lemma \ref{adapted basis W}, and suppose by contradiction that $r\geq 3$.\\
Since $w_1,w_2$ and $w_3$ are maximal, we have $-2w_1,-2w_2,-2w_3\notin W$, and therefore by Lemma \ref{key lemma weights}
$$-w_2-w_3,-w_3-w_1\in W.$$
Since $-w_2-w_3$ and $-w_3-w_1$ are both maximal for $\alpha_3$, by Remark \ref{maximal simple} we have
$$w_1+w_2+2w_3\in W.$$
 However this contradicts the $\alpha_3$-maximality of $w_3$.
\end{proof}

We will show later that the rank of $g$ is $<2$ if and only if its dynamical degrees $\lambda_1(g)$ and $\lambda_2(g)$ satisfy a multiplicative relation:
$$\lambda_1(g)^m=\lambda_2(g)^n, \qquad (m,n)\in \N^2\setminus \{(0,0)\};$$
see Corollary \ref{rank resonance}.

\subsection{The case $r(g)=2$}
\label{section thm B r=2}

Throughout  this section, we assume that the rank of $g$ (i.e. the split-rank of $G=\overline{\langle g \rangle}^{Zar}$, see Section \ref{split rank}) is equal to $2$; in other words, the elements of $W$ span a real vector space of dimension $2$.

\begin{prop}
\label{r=2 global}
Let $g\in \GL(H^*(X,\R))$ be a semisimple linear automorphism preserving the cohomology graduation, Hodge decomposition, wedge product and Poincar\'e duality. Denote by $\lambda_1=\lambda_1(g)$ and $\lambda_2=\lambda_2(g)$ the spectral radii  of $g_2$ and $g_4$ respectively, by $\Lambda$  the spectrum of $g_2$ (with multiplicities) and by $W$  the set of weights of eigenvalues $\lambda\in\Lambda$ (with multiplicities).\\
Let $w_1$ (respectively $w_2$) be the weight of $W$ associated to the eigenvalue $\lambda_1\in \Lambda$ (respectively $\lambda_2\inv\in \Lambda$).

Assume that the rank of $g$ is equal to $2$. Then
\begin{enumerate}
\item $\lambda_1\inv\lambda_2$ is an element of $\Lambda$, whose weight is $w_3:=-w_1-w_2$;
\item $w_1,w_2$ and $w_3$ are maximal weights of $W$, and in particular they have multiplicity $1$ in $W$; in other words, $\lambda_1,\lambda_2\inv$ and $\lambda_1\inv\lambda_2$ are simple eigenvalues $g_2$, and no other eigenvalue of $g_2$ has modulus $\lambda_1,\lambda_2\inv$ or $\lambda_1\inv\lambda_2$;
\item for any eigenvalue $\lambda\in \Lambda$ such that $|\lambda|\notin \{\lambda_1,\lambda_2\inv,\lambda_1\inv\lambda_2,1\}$, we have $|\lambda|^{-2}\in \Lambda$;
\item there exist $n_1,n_2,n_3\geq 0$ such that, up to multiplicities,
$$W\setminus \{0\}=\bigcup_{i=1,2,3}\left\{\frac{w_i}{(-2)^n}\, ;\, n=0,\ldots, n_i\right\}.$$
\end{enumerate}
\end{prop}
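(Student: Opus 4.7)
The proof combines three ingredients: Lemma \ref{adapted basis W} (producing a basis of maximal weights), Lemma \ref{key lemma weights} (iteratively generating new weights from existing ones), and the injectivity of the log-modulus functional $\alpha\colon w_\mu \mapsto \log|\mu|$ on distinct elements of $W$. This last point holds because $\langle g \rangle$ is Zariski-dense in $G_0$ (hence $\langle g^N \rangle$ is dense in the identity component, where $N = [G : G_0]$), so characters of $G_0$ are determined by their value at $g$, i.e.\ by $|\mu|$; in particular, two distinct elements of $W$ have distinct $\alpha$-values.

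First I apply Lemma \ref{adapted basis W} with $r = 2$ to obtain a basis $u_1, u_2 \in W$ of $\R^2$ with dual $\beta_1, \beta_2$ such that $u_i$ is $\beta_i$-maximal and $\beta_2(u_1) = 0$; Remark \ref{maximal simple} yields $-2u_1, -2u_2 \notin W$, and Lemma \ref{key lemma weights} applied to $(u_1, u_2)$ forces $u_3 := -u_1 - u_2 \in W$, which is also $\beta_2$-maximal (hence simple, with $-2u_3 \notin W$). I next show the $\alpha$-extrema on $W$, namely $\log \lambda_1$ and $-\log\lambda_2$ (these come from Poincar\'e duality applied to $g_4$), are attained among the $u_l$: assume $w^+ \in W \setminus \{u_1, u_2, u_3\}$ realises the $\alpha$-max. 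The distinct values $\alpha(u_l)$ sum to $0$, so $M := \max_l \alpha(u_l) > 0$, and $\alpha(w^+) > M$ by injectivity. Picking $i, j$ with $\alpha(u_i) = M$ and $\alpha(u_j) = m := \min_l \alpha(u_l)$, Lemma \ref{key lemma weights} applied to $(w^+, u_i)$ forces $z_i := -w^+ - u_i \in W$ (as $-2w^+, -2u_i \notin W$); applied to $(z_i, u_j)$, it forces $-z_i - u_j \in W$ (since $-2u_j \notin W$ and $-2z_i$ has $\alpha$-value $2\alpha(w^+) + 2M > \alpha(w^+)$, hence cannot belong to $W$). But $\alpha(-z_i - u_j) = \alpha(w^+) + M - m > \alpha(w^+)$, contradicting the maximality of $\alpha(w^+)$. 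The symmetric argument handles spurious $\alpha$-minima. Relabelling so that $\alpha(w_1) = \log\lambda_1$, $\alpha(w_2) = -\log\lambda_2$, and $\alpha(w_3) = \log(\lambda_2/\lambda_1)$, the relation $u_1 + u_2 + u_3 = 0$ gives $w_3 = -w_1 - w_2$, proving (1); (2) follows from Remark \ref{maximal simple} combined with $\alpha$-injectivity, which forces uniqueness of the weight attached to each of the three distinguished moduli.

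For (3), let $w \in W \setminus \{0, w_1, w_2, w_3\}$. If $w$ has multiplicity $\geq 2$ in $W$, Lemma \ref{key lemma} applied to two distinct eigenvalues of weight $w$ gives $-2w \in W$. Otherwise, supposing for contradiction that $-2w \notin W$, Lemma \ref{key lemma weights} applied to $(w, w_i)$ forces $-w - w_i \in W$ for every $i$ (using $-2w_i \notin W$), and the $\alpha$-range $[-\log\lambda_2, \log\lambda_1]$ on $W$ sandwiches $\alpha(w) = \log(\lambda_2/\lambda_1) = \alpha(w_3)$ through the constraints $\alpha(-w - w_1) \geq -\log\lambda_2$ and $\alpha(-w - w_2) \leq \log\lambda_1$; $\alpha$-injectivity then gives $w = w_3$, contradiction. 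For (4), iterate $w \mapsto -2w$ from any $w \in W \setminus \{0\}$: by (3), iterates stay in $W$ until reaching some $w_i$, and $|\alpha(-2^k w)| = 2^k|\alpha(w)|$ grows without bound (note $\alpha(w) \neq 0$, since otherwise $\alpha$-injectivity would force $-2w = w$, hence $w = 0$), so finiteness of $W$ forces termination after some $n \geq 0$ steps, yielding $w = w_i/(-2)^n$; setting $n_i := \max\{n \geq 0 : w_i/(-2)^n \in W\}$ gives (4). The main technical obstacle is the two-step cascade in the spurious-extremum argument of the second paragraph: choosing $(i, j)$ as the arg-max and arg-min of $\alpha$ on $\{u_1, u_2, u_3\}$ is essential for the derived weight $-z_i - u_j$ to have $\alpha$-value strictly exceeding $\alpha(w^+)$; with a different index choice the cascade produces no contradiction.
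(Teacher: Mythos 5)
Your argument is correct in spirit and takes a genuinely different route from the paper's. Where the paper works with the $\alpha_2$-functional coming from the adapted basis of Lemma \ref{adapted basis W} and splits into cases according to the sign of $\alpha_2(w)$, you introduce the log-modulus functional $\alpha\colon w_\mu \mapsto \log|\mu|$ and exploit its injectivity on $W$ (which does follow, as you say, from the Zariski-density of $\langle g^N\rangle$ in $G_0$: characters of $G_0$ are determined by their value at $g$, hence by $|\mu|$). This observation is clean and makes the identification of the three distinguished weights with the three extremal moduli essentially automatic, replacing the paper's separate argument (that the weight of $\lambda_1$ cannot be of the form $w_i/(-2)^k$ with $k\geq 1$) by the short cascade in your second paragraph. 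What the paper's route buys is directness: using $\alpha_2$ gives precisely the case split needed to kill one of $-w-w_2$, $-w-w_3$ and thereby apply Lemma \ref{key lemma} (the eigenvalue version) with both alternative conclusions excluded.

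That last point is exactly where your proof has a real, if small, gap: in parts (1) and (3) you establish the \emph{weight} statements $w_3 = -w_1-w_2 \in W$ and $-2w \in W$, but the proposition asserts the \emph{eigenvalue} statements $\lambda_1^{-1}\lambda_2 \in \Lambda$ and $|\lambda|^{-2}\in\Lambda$. Knowing $-2w\in W$ only tells you that some eigenvalue of modulus $|\lambda|^{-2}$ exists; it could a priori be $-|\lambda|^{-2}$ or a non-real number of that modulus. Your multiplicity-$\geq 2$ sub-case has the same issue: Lemma \ref{key lemma} applied to two eigenvalues $\lambda,\lambda'$ of weight $w$ returns one of $1/|\lambda|^2$, $1/(\lambda\bar\lambda')$, $1/|\lambda'|^2$ in $\Lambda$, and the middle alternative need not be real. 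The fix is the one the paper uses: pair $\lambda$ with one of the three real positive eigenvalues $\lambda_1$, $\lambda_2^{-1}$, $\lambda_1^{-1}\lambda_2$ rather than with another eigenvalue of equal modulus. Concretely, run your $\alpha$-bound argument \emph{forward}: since $\alpha(w)\neq\alpha(w_3)$ by injectivity, either $\alpha(-w-w_1) < -\log\lambda_2$ (forcing $-w-w_1\notin W$, hence $\lambda_1/\lambda \notin\Lambda$) or $\alpha(-w-w_2) > \log\lambda_1$ (forcing $-w-w_2\notin W$, hence $\lambda_2^{-1}/\lambda\notin\Lambda$); combined with $-2w_1,-2w_2\notin W$ and Lemma \ref{key lemma} applied to $\lambda$ and $\lambda_1$ (or $\lambda_2^{-1}$), this yields $|\lambda|^{-2}\in\Lambda$ directly. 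The analogous application of Lemma \ref{key lemma} to $\lambda_1,\lambda_2^{-1}$ (using $-2w_1,-2w_2\notin W$) gives $\lambda_1^{-1}\lambda_2\in\Lambda$ and closes the gap in (1). Your parts (2) and (4) are fine as written, noting that (4) only needs the weight version of (3) that you did prove.
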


\setlength{\unitlength}{1.2pt}
\begin{figure}
\begin{picture}(1,140)(60,1)
\put(0,0){\line(1,2){64}}
\put(0,0){\line(1,0){128}}
\put(128,0){\line(-1,2){64}}
\put(0,0){\line(3,2){94}}
\put(128,0){\line(-3,2){94}}
\put(64,0){\line(0,1){128}}
\put(0,0){\circle*{3}}
\put(128,0){\circle*{3}}
\put(64,128){\circle*{3}}
\put(64,42.67){\circle*{5}}
\put(96,64){\circle*{3}}
\put(32,64){\circle*{3}}
\put(64,0){\circle*{3}}
\put(64,0){\line(1,2){32}}
\put(32,64){\line(1,0){64}}
\put(64,0){\line(-1,2){32}}
\put(48,32){\circle*{3}}
\put(80,32){\circle*{3}}
\put(48,32){\line(1,2){16}}
\put(56,48){\circle*{3}}

\put(-12,0){$w_1$}
\put(133,0){$w_2$}
\put(52,128){$w_3$}
\put(66,48){$0$}
\put(98,64){$-\frac{w_1}2$}
\put(15,64){$-\frac{w_2}2$}
\put(68,4){$-\frac{w_3}2$}
\put(36,32){$\frac{w_1}4$}
\put(85,32){$\frac{w_2}4$}
\put(39,44){$-\frac{w_2}8$}
\end{picture}
\caption{An example of the structure of $W\subset \R^2$ (without taking multiplicities into account) in the case $r(g)=2$; here, with the notation of Proposition \ref{r=2 global}, $n_1=2, n_2=1, n_3=3$.}
\end{figure}
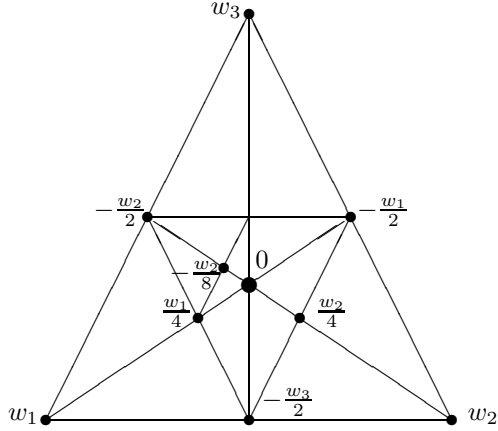

\begin{proof}
Let us fix an adapted basis $w_1,w_2$ of $\R^2$ as in Lemma \ref{adapted basis W}, and let $w_3:=-w_1-w_2$. We show first properties $(2)-(4)$ for these $w_1,w_2,w_3$, and then that, after possibly permuting indices, $w_1,w_2$ and $w_3$ are the weights of $\lambda_1,\lambda_2\inv, \lambda_1\inv\lambda_2\in \Lambda$ respectively.

The maximality of $w_1,w_2$ is part of Lemma \ref{adapted basis W}; since $\alpha_2(w_1)=0$, $w_3$ is also $\alpha_2$-maximal. Property $(2)$ then follows from Remark \ref{maximal simple}.

Now let $\lambda\in \Lambda$ be an eigenvalue of $g_2$ whose weight $w$ is different from $w_1,w_2,w_3$; we want to show that $|\lambda|^{-2}\in \Lambda$.
\begin{itemize}
\item Suppose first that  $w=0$ (i.e. $|\lambda|=1$), and let $\lambda'\in \Lambda$ be an eigenvalue whose weight is $w_1$; recall that by maximality $-2w_1\notin W$ and $w_1$ is a simple weight. If $|\lambda|^{-2}=1\notin \Lambda$, then by Lemma \ref{key lemma} we would have $\lambda\lambda'\in \Lambda$, which contradicts the simplicity of $w_1$.
\item Now suppose that $\alpha_2(w)\neq 0$; since $\alpha_2(w_2)$ and $\alpha_2(w_3)$ have different sign, we have either $|\alpha_2(-w-w_2)|>|\alpha_2(w_2)|$ or  $|\alpha_2(-w-w_3)|>|\alpha_2(w_3)|=|\alpha_2(w_2)|$, so that by maximality $-w-w_2$ and $-w-w_3$ cannot be both weights of $W$. Since, again by maximality, $-2w_2,-2w_3\notin W$, by Lemma \ref{key lemma} $|\lambda|^{-2}\in \Lambda$.
\item Finally suppose that $w\neq 0$ and $\alpha_2(w)=0$, so that  $w \in \R w_1$. We repeat the inductive construction of an adapted basis as in the proof of Lemma \ref{adapted basis W} starting with $w_1':=w_2$, which is maximal for $\alpha_1':=\alpha_2$; pick any non-trivial $\alpha_2'\in w_2^\perp \subset (\R^2)^\vee$ and $w_2'\in W$ maximal for $\alpha_2'$. If we had again $\alpha_2'(w)=0$, then $w\in \R w_1 \cap \R w_2=\{ 0 \}$, a contradiction; thus we can conclude as above.
\end{itemize}
This shows property $(3)$.

Property $(4)$ follows from property $(3)$: indeed, if a $w$ didn't satisfy property $(4)$, by $(3)$ we would have $(-2)^nw\in W\setminus \{0\}$ for all $n\in \N$, contradicting the finiteness of $W$.

Now let us show that, after permuting indices, $w_1$ and $w_2$ are the weights of $\lambda_1$ and $\lambda_2\inv$ respectively. 
Since $\lambda_2(g)=\lambda_1(g\inv)$, it is enough to show that the weight of $\lambda_1$ is one of the $w_i$. \\
Suppose by contradiction that the weight $w$ of $\lambda_1$ is not one of the $w_i$; then by property $(4)$ there exist $k>0$ and $i\in \{1,2,3\}$ such that
$$w=\frac{w_i}{(-2)^k}.$$
Since $\lambda_1$ is the spectral radius of $g_2$, we have $\lambda_1^4\notin \Lambda$; since by property $(3)$ we have $(-2)^hw\in W$ for all $h=0,\ldots k$, we must have $k=1$. Up to permuting the indices, we may suppose that $i=1$, so that
$$2w=-w_1=w_2+w_3.$$
Denoting by $\lambda,\lambda'\in \Lambda$ the eigenvalues associated to $w_2$ and $w_3$, this means that
$$|\lambda\lambda'|=\lambda_1^2;$$
since $\lambda_1$ is the spectral radius of $g_2$, this implies that $|\lambda|=|\lambda'|=\lambda_1$, contradicting the assumption that $r(g)=2$.\\
This shows that we may assume that $w_1$ and $w_2$ are the weights of the eigenvalues $\lambda_1,\lambda_2\inv\in \Lambda$. Since $w_3=-w_1-w_2$ has multiplicity $1$ in $W$, it is associated to a real simple eigenvalue, which is $\lambda_1\inv\lambda_2$ by Lemma \ref{key lemma}. This concludes the proof.
\end{proof}

Proposition \ref{r=2 global} shows in particular that, if $r(g)=2$, then $\lambda_1$ and $\lambda_2$ are multiplicatively independent: 
$$\lambda_1(g)^m=\lambda_2(g)^n, \, m,n\in \Z  \quad \Leftrightarrow \quad m=n=0.$$
Indeed, the weights $w_1$ and $w_2$ form a base of $\R^2$.\\
Conversely, if $r=1$, since all the weights of $g_2$ can be interpreted as integers, $\lambda_1$ and $\lambda_2$ satisfy a non-trivial equation $\lambda_1(g)^m=\lambda_2(g)^n$. Thus we have the following:

\begin{cor}
\label{rank resonance}
The rank of $g$ is equal to $2$ if and only if the dynamical degrees of $g$ are multiplicatively independent.
\end{cor}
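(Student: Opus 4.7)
My plan splits the equivalence into its two directions, each essentially a translation between multiplicative relations among dynamical degrees and $\Z$-linear relations among the corresponding weights in $\R^{r(g)}$.

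\emph{Direction $r(g) = 2 \Rightarrow$ multiplicative independence.} Proposition \ref{r=2 global} already identifies $w_1$ and $w_2$ (the weights of $\lambda_1$ and $\lambda_2\inv$) as a basis of $\R^2$, so I only need to argue that any multiplicative relation between $\lambda_1$ and $\lambda_2$ yields a linear relation between $w_1$ and $w_2$. This is immediate: if $\lambda_1^m = \lambda_2^n$, then the character $\rho_{\lambda_1}^m \cdot \rho_{\lambda_2\inv}^n$ is trivial on $\langle g \rangle$, hence trivial on $G_0$ by Zariski-density, and in additive notation this reads $m w_1 + n w_2 = 0$; thus $m = n = 0$.

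\emph{Direction $r(g) \leq 1 \Rightarrow$ multiplicative dependence.} When $r(g) = 0$, every eigenvalue of $g_2$ has modulus $1$, in particular $\lambda_1 = 1$ and, by Proposition \ref{log-concavity}, also $\lambda_2 = 1$, so any non-trivial $(m,n) \in \Z^2$ yields the trivial relation $\lambda_1^m = 1 = \lambda_2^n$. When $r(g) = 1$, log-concavity together with Poincar\'e duality forces $\lambda_1 > 1$ (otherwise all weights vanish and $r(g) = 0$); then the weights $a$ of $\lambda_1$ and $b$ of $\lambda_2\inv$ are non-zero integers, and writing $x_0 > 1$ for the split coordinate of $g$ in $G_d \cong \R^*$, one has $\lambda_1 = x_0^a$ and $\lambda_2 = x_0^{-b}$, from which the non-trivial identity $\lambda_1^b \lambda_2^a = 1$ follows by equating exponents.

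The argument is essentially bookkeeping; the only mildly delicate points are to verify that characters of $G_0$ are determined by their values on iterates of $g$ (immediate from Zariski-density) and to rule out the degenerate case $\lambda_1 = 1$ when $r(g) \geq 1$, which requires invoking log-concavity of the dynamical degrees. I do not foresee any serious obstacle beyond these routine verifications.
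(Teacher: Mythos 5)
Your argument is correct and follows essentially the same route as the paper's: when $r(g)=2$, Proposition \ref{r=2 global} makes $w_1,w_2$ a basis of $\R^2$, so a multiplicative relation $\lambda_1^m=\lambda_2^n$ translates into $mw_1+nw_2=0$ and forces $m=n=0$; when $r(g)\leq 1$, the weights sit in a rank-$\leq 1$ lattice and a nontrivial relation falls out. Two small points are worth correcting, though. First, you invoke Proposition \ref{log-concavity} to pass from $\lambda_1=1$ to $\lambda_2=1$ (and to force $\lambda_1>1$ when $r(g)=1$), but that proposition concerns the dynamical degrees of an actual dominant meromorphic self-map; in this section $g$ is an abstract semisimple automorphism of $H^*(X,\R)$ satisfying only properties $(1)$--$(3)$, and $\lambda_1(g),\lambda_2(g)$ are defined as the spectral radii of $g_2$ and $g_4$, so log-concavity is not available. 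Second, neither claim is actually needed: for $r(g)=0$, Poincar\'e duality alone gives that $\lambda_2(g)$ is the spectral radius of $g_2^{-1}$, hence equal to $1$ as soon as every eigenvalue of $g_2$ has modulus $1$; and for $r(g)=1$, Lemma \ref{split rank weights} guarantees that some weight is nonzero, and since $w_1$ and $w_2$ are respectively the largest and smallest weights, at least one of $a=w_1$, $b=w_2$ is nonzero — this already makes the relation $\lambda_1^b\lambda_2^a=1$ nontrivial without having to settle $\lambda_1>1$ beforehand.
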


\subsection{The case $r(g)=1$}
\label{section thm B r=1}

Recall that we denote by $\lambda_1=\lambda_1(g)$ and $\lambda_2=\lambda_2(g)$ the dynamical degrees of $g$, by $\Lambda$  the spectrum of $g_2$ (with multiplicities) and by $W$  the set of weights of eigenvalues $\lambda\in\Lambda$ (with multiplicities).\\
Throughout all this section, we assume that the rank of $g$ (i.e. the split-rank of $G=\overline{\langle g \rangle}^{Zar}$, see Section \ref{split rank}) is equal to $1$; in other words, the elements of $W$ span a real vector space of dimension $1$. In this case the weights are equipped with a natural order: $w_\lambda > w_{\lambda'}$ if and only if $|\lambda|>|\lambda'|$; for $w\in W$ we set $|w|:=\max\{w,-w\}$.

\begin{prop}
\label{r=1 global}

Let $g\in \GL(H^*(X,\R))$ be a semisimple linear automorphism preserving the cohomology graduation, Hodge decomposition, wedge product and Poincar\'e duality. Denote by $\lambda_1=\lambda_1(g)$ and $\lambda_2=\lambda_2(g)$ the spectral radii  of $g_2$ and $g_4$ respectively, by $\Lambda$  the spectrum of $g_2$ (with multiplicities) and by $W$  the set of weights of eigenvalues $\lambda\in\Lambda$ (with multiplicities).\\
Let $w_1$ (respectively $w_2$) be the weight of $W$ associated to the eigenvalue $\lambda_1\in \Lambda$ (respectively $\lambda_2\inv\in \Lambda$).

Assume that the rank of $g$ is equal to $1$ (i.e. $\lambda_1>1$ and $\lambda_1$ and $\lambda_2$ are not multiplicatively independent). Then
\begin{enumerate}
\item $\lambda_1\inv\lambda_2$ is an element of $\Lambda$, whose weight is $w_3:=-w_1-w_2$;
\item for any eigenvalue $\lambda\in \Lambda$ such that $|\lambda|\notin \{\lambda_1,\lambda_2\inv,\lambda_1\inv\lambda_2,1\}$, we have $|\lambda|^{-2}\in \Lambda$;
\item there exist $n_1,n_2,n_3\geq 0$ such that, up to multiplicities,
$$W\setminus \{0\}=\bigcup_{i=1,2,3}\left\{\frac{w_i}{(-2)^n}\, ;\, n=0,\ldots, n_i\right\}.$$
\end{enumerate}
\end{prop}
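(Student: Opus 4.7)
The plan is to mirror the proof of Proposition \ref{r=2 global} in the one-dimensional setting, using the extrema of $W$ in place of an adapted basis. Since $r(g) = 1$, I identify the span of $W$ with $\R$, so $W$ becomes a finite multiset of real numbers; the weight $w_1$ of $\lambda_1$ is then $\max W$ (as $\lambda_1$ is the spectral radius of $g_2$), while the weight $w_2$ of $\lambda_2^{-1}$ is $\min W$ (as $\lambda_2$ is the spectral radius of $g_4$ and Poincaré duality inverts the spectrum). From $\lambda_1 > 1$ and the log-concavity estimate $\lambda_2 \geq \sqrt{\lambda_1}$, one gets $w_1 > 0 > w_2$, and from $\lambda_1\lambda_2 > 1$ one gets $w_1 > |w_2|$; so $w_1$ is the unique weight of maximum absolute value, hence simple with $-2w_1 \notin W$ by Remark \ref{maximal simple}.

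The first key preliminary step is to show that $-2w_2 \notin W$ as well. If $-2w_2 \in W$, then Lemma \ref{key lemma weights} applied to the pair $(w_1, -2w_2)$ would force one of $-2w_1$, $-w_1 + 2w_2$, $4w_2$ to lie in $W$; but $-2w_1 \notin W$ by maximality, $-w_1 + 2w_2$ has absolute value $w_1 + 2|w_2| > w_1$ and is excluded by maximality, and $4w_2 < w_2 = \min W$ is excluded by minimality — contradiction. With $-2w_1, -2w_2 \notin W$, Lemma \ref{key lemma weights} applied to $(w_1, w_2)$ then yields $w_3 := -w_1 - w_2 \in W$, proving (1); the inequality $w_3 \geq w_2$ needed for consistency with minimality of $w_2$ amounts to $\lambda_1 \leq \lambda_2^2$, which holds by log-concavity. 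The simplicity of $w_3$ and the identification of its eigenvalue with the positive real $\lambda_1^{-1}\lambda_2$ then proceed exactly as at the end of the proof of Proposition \ref{r=2 global}.

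For (2), I take $w \in W \setminus \{w_1, w_2, w_3, 0\}$ and suppose for contradiction that $-2w \notin W$. Applying Lemma \ref{key lemma weights} to $(w, w_1)$ and $(w, w_2)$ produces two new weights $a := -w - w_1$ and $b := -w - w_2$ in $W$; one checks that $a, b \notin \{w_1, w_2\}$ using $w \notin \{w_3, -2w_1, -2w_2\}$. A second round applied to $(w_2, a)$ and $(w_1, b)$ rules out the respective middle terms $w + w_1 - w_2 > w_1$ and $w + w_2 - w_1 < w_2$, forcing $-2a \in W$ and $-2b \in W$. A third round gives: applied to $(-2a, w_1)$, the weight $-w_1 + 2a = -3w_1 - 2w$ has absolute value $3w_1 + 2w > w_1$ (since $w > -w_1$), so $4a \in W$, and the bound $|4a| = 4w + 4w_1 \leq w_1$ forces $w \leq -3w_1/4$; applied to $(-2b, w_2)$, the middle term $-w_2 + 2b = -2w + 3|w_2|$ is then at least $3w_1/2 + 3|w_2| > w_1$, so $4b \in W$, but $|4b| = -4w + 4|w_2| \geq 3w_1 + 4|w_2| > w_1$ — the final contradiction.

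Property (3) then follows directly from (2) by a standard orbit argument: for $w \in W \setminus \{0\}$ the sequence $(-2)^n w$ has growing absolute value and must exit the finite set $W$, so there exists a largest $N \geq 0$ with $(-2)^N w \in W$; by (2) this last element lies in $\{w_1, w_2, w_3, 0\}$, and it cannot be $0$ since $w \neq 0$, so $w = w_i/(-2)^N$ for some $i \in \{1,2,3\}$. The main obstacle is the three-step iteration in (2): in the absence of a two-dimensional adapted basis to symmetrize the roles of $w_1$ and $w_2$, one has to play maximality of $w_1$ off against minimality of $w_2$ via successive applications of the key lemma, which is noticeably more delicate than the corresponding step in Proposition \ref{r=2 global}.
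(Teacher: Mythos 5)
Your ``preliminary step'' claiming $-2w_2\notin W$ is where the proof breaks. That claim is \emph{false} precisely when $w_1=-2w_2$, i.e.\ when $\lambda_1=\lambda_2^2$ --- the boundary of the log-concavity inequality $\lambda_1\leq\lambda_2^2$, which does occur (it is Case~2 of Proposition \ref{r=1 conjugates lambda_1} and is realized by torus examples). In that case $-2w_2=w_1\in W$, and your proof of the preliminary step is vacuous: Lemma \ref{key lemma weights} requires two \emph{distinct} elements of $\Lambda$, and since $w_1$ is a simple weight the pair $(w_1,-2w_2)=(w_1,w_1)$ is a single eigenvalue repeated. Everything downstream silently discards the alternative $-2w_2\in W$: the deduction of $w_3\in W$ from the key lemma applied to $(w_1,w_2)$, as well as rounds two and three of your argument for (2) (the applications to $(w_2,a)$ and to $(-2b,w_2)$) all fail. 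The paper handles $w_1=-2w_2$ explicitly at the start of (1) --- then $w_3=w_2$ and $\lambda_1\inv\lambda_2=\lambda_2\inv$, so (1) is immediate --- and for (2) in that case one applies Lemma \ref{key lemma} to $(v,v_1)$ rather than $(v,v_2)$: for $w\in\,]w_2,0[$ one has $-w-w_1<-w_1/2=w_2=\min W$ and $-2w_1\notin W$, forcing $-2w\in W$.

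Two smaller issues. You assert that $\lambda_1\lambda_2>1$ gives $w_1>|w_2|$; this is a logical slip, since $\lambda_1\lambda_2>1$ translates to $w_1-w_2>0$ (automatic), whereas $w_1>|w_2|$ is equivalent to $w_1+w_2>0$, i.e.\ to $\lambda_1>\lambda_2$, which need not hold. The paper obtains $w_1\geq|w_2|$ as a harmless normalization, replacing $g$ by $g\inv$ if necessary (which swaps $\lambda_1\leftrightarrow\lambda_2$ and $w_1\leftrightarrow w_2$, under which the statement is invariant). Also, even where your proof of (2) is sound it is much heavier than the paper's: the opening remark of the paper's proof --- that any weight $w\in\,]0,w_1[$ satisfies $-2w\in W$, because in Lemma \ref{key lemma} applied to $(v,v_1)$ the other two terms $-w-w_1$ and $-2w_1$ are killed by maximality of $w_1$ --- disposes of $w>0$ at once and reduces $w<0$ to a single application of the key lemma to $(v,v_2)$ followed by the same remark, instead of your three-round iteration. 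Indeed for $w>0$ your own round one already produces $a=-w-w_1<-w_1<w_2=\min W$, an immediate contradiction you do not exploit.
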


\begin{rem}
Let $g=f^*_s$, where $f\colon X\to X$ is an automorphism and $f^*_s$ denotes the semisimple part of the induced linear automorphism $f^*\in \GL(H^*(X,\R))$. Then $w_2\notin \{ -2w_1,-w_1/2\}$ if and only if the log-concavity inequalities
$$\sqrt{\lambda_1(f)}\leq \lambda_2(f) \leq \lambda_1(f)^2$$
are strict (see  Proposition \ref{log-concavity}). If this is  the case, then by Proposition \ref{r=2 global} and  \ref{r=1 global} $\lambda_1$ and $\lambda_2\inv$ are the only eigenvalues of $f^*_2$ having such modulus and they are simple. This was already proven in greater generality by Truong in \cite{MR3255693}.
\end{rem}

\setlength{\unitlength}{1.2 pt}
\begin{figure}
\begin{picture}(1,20)(0,-10)
\put(0,0){\line(1,0){150}}
\put(0,0){\line(-1,0){150}}

\put(0,0){\circle*{5}}
\put(0,5){$0$}

\put(120,0){\circle*{3}}
\put(120,5){$w_1$}

\put(-60,0){\circle*{3}}
\put(-65,5){$-\frac{w_1}2$}

\put(30,0){\circle*{3}}
\put(30,5){$\frac{w_1}4$}


\put(-100,0){\circle*{3}}
\put(-100,-10){$w_2$}

\put(50,0){\circle*{3}}
\put(45,-10){$-\frac{w_2}2$}

\put(-20,0){\circle*{3}}
\put(-20,-10){$w_3$}

\put(10,0){\circle*{3}}
\put(5,-10){$-\frac{w_3}2$}

\put(10,0){\line(0,1){2}}
\put(20,0){\line(0,1){2}}
\put(30,0){\line(0,1){2}}
\put(40,0){\line(0,1){2}}
\put(50,0){\line(0,1){2}}
\put(60,0){\line(0,1){2}}
\put(70,0){\line(0,1){2}}
\put(80,0){\line(0,1){2}}
\put(90,0){\line(0,1){2}}
\put(100,0){\line(0,1){2}}
\put(110,0){\line(0,1){2}}
\put(120,0){\line(0,1){2}}
\put(130,0){\line(0,1){2}}
\put(140,0){\line(0,1){2}}
\put(-10,0){\line(0,1){2}}
\put(-20,0){\line(0,1){2}}
\put(-30,0){\line(0,1){2}}
\put(-40,0){\line(0,1){2}}
\put(-50,0){\line(0,1){2}}
\put(-60,0){\line(0,1){2}}
\put(-70,0){\line(0,1){2}}
\put(-80,0){\line(0,1){2}}
\put(-90,0){\line(0,1){2}}
\put(-100,0){\line(0,1){2}}
\put(-110,0){\line(0,1){2}}
\put(-120,0){\line(0,1){2}}
\put(-130,0){\line(0,1){2}}
\put(-140,0){\line(0,1){2}}

\end{picture}
\caption{Example of weights of $W\subset \R$ (without taking multiplicities into account) in the case $r(g)=1$; here, with the notation of Proposition \ref{r=1 global}, $n_1=2, n_2=1, n_3=1$.}
\end{figure}
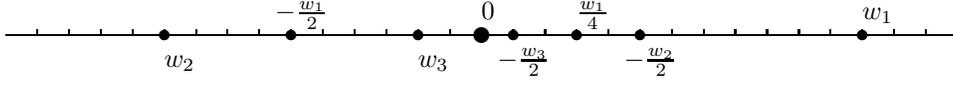

\begin{proof}
After possibly replacing $g$ by $g\inv$, we may suppose that $w_1=|w_1|\geq |w_2|=-w_2$, so that $w_1$ is maximal; let $v_1,v_2\in H^2(X,\R)$ denote eigenvectors for the eigenvalues $\lambda_1,\lambda_2\inv\in \Lambda$.\\
Remark that, if $v\in H^2(X,\C)$ is an eigenvector whose eigenvalue $\lambda\in \Lambda$ has weight $w\in ]0,w_1[$, then by Lemma \ref{key lemma} applied to $v$ and $v_1$ and by maximality of $w_1$ we have $v\wedge \bar v\neq 0$.

Let us prove first that $\lambda_1\inv\lambda_2\in \Lambda$. If $w_1=-2w_2$, then $\lambda_1\inv \lambda_2=\lambda_2\inv$ and the claim is evident; therefore we may suppose that $w_1\neq -2w_2$. We observe first that, since $\lambda_2\inv$ is the minimal modulus of eigenvalues of $g_2$, $w_2$ is the minimal weight for the natural order introduced above. Now, if we had $\lambda_1\inv \lambda_2\notin \Lambda$, then by Lemma \ref{key lemma} we would have $\lambda_2^2\in \Lambda$ and in particular $-2w_2\in W$; since we supposed that $-2w_2 \neq w_1$, by the above remark this implies that $4w_2\in W$, contradicting the minimality of $w_2$. This shows $(1)$.

Now let us show that for any $\lambda\in \Lambda$ whose weight is $w\in W\setminus \{0,w_1,w_2,w_3\}$ and for any eigenvector $v\in H^2(X,\C)$ with eigenvalue $\lambda$ we have $v\wedge \bar v\neq 0$. The case $w> 0$ (i.e. $|\lambda|\geq 1$) has been treated above; let $w<0$, and suppose by contradiction that $v\wedge \bar v=0$. Then by Lemma \ref{key lemma} we get $-w-w_2\in W$, and since $-w-w_2>0$ and $-w-w_2\neq w_1$ by assumption, we also have $2w+2w_2\in W$ (see the remark at the beginning of the proof); this contradicts the minimality of $w_2$ for the natural order, and concludes the proof of $(2)$.

Property $(3)$ follows from $(2)$ by induction: indeed, if a $w$ didn't satisfy property $(3)$, by $(2)$ we would have $(-2)^nw\in W\setminus \{0\}$ for all $n\in \N$, contradicting the finiteness of $W$.

Now assume that $w_1\neq -2w_2$ and suppose by contradiction that $w_2$ has multiplicity $>1$ in $W$. Then by Lemma \ref{key lemma weights} $-2w_2\in W$, and since $-2w_2>0$ we also have $4w_2\in W$. This contradicts the minimality of $\lambda_2$ for the natural order and proves $(4)$.
\end{proof}

\section{Automorphisms of threefolds: the semisimple case, proof of Theorem \ref{thm:conjugates lambda_1}}
\label{section semisimple thm C}

As in \textsection \ref{section semisimple thm B}, let $X$ be a compact K\"ahler threefold and let $g\in \GL(H^*(X,\R))$ be a \emph{semisimple} linear automorphism preserving the Hodge decomposition, the wedge product and Poincar\'e duality; furthermore, suppose now that $g$ and $g\inv$ are defined over $\Z$. These are properties $(1)-(4)$ in \textsection \ref{first constraints}.\\
We denote as usual by $\lambda_1$ and $\lambda_2$ the dynamical degrees of $g$ (i.e. the spectral radii of $g_2$ and $g_4$ respectively), by $\Lambda$ the spectrum of $g_2$ (with multiplicities) and by $W$ the set of weights of $g_2$ (with multiplicities).\\
Recall that we pick all eigenvectors of $g_2$ inside the union of subspaces $H^{1,1}(X) \cup (H^{2,0}(X)\oplus H^{0,2}(X))$ (see Remark \ref{Jordan form Hodge}).

Let $P(T)$ be the minimal polynomial of $g_2$; since $g_2$ is defined over $\Z$, we have $P(T)\in \Z[T]$. Since $g$ is semisimple, we can write
$$P(T)=P_1(T)\cdots P_n(T),$$
where the $P_i\in \Z[T]$ are distinct and irreducible over $\Q$. Let $P_1$ be the factor having $\lambda_1(g)$ as a root, and denote by $\Lambda_i\subset \Lambda$ (respectively $W_i\subset W$) the set of roots of $P_i$ (respectively the set of weights of roots of $P_i$).

For $i=1,\ldots ,n$ let 
$$V_i=\ker P_i(g_2) \subset V:= H^2(X,\C).$$
Since $g$ is semisimple, we have
$$V=\bigoplus _{i=1}^n V_i.$$

For a polynomial $Q\in \C[T]$ define
$$Q^\vee(T)=T^{\deg Q} \cdot Q(T\inv).$$
Poincar\'e's duality allows to identify $H^4(X,\C)$ with $H^2(X,\C)^\vee=V^\vee$; under this identification, we have $g_4=(g_2 \inv)^\vee$, so that the minimal polynomial of $g_4$ is
$$P^\vee(T) = P_1^\vee(T) \cdots P_n^\vee(T).$$
Since $g_4$ is semisimple, we have
$$V^\vee=H^4(X,\C)=\bigoplus _{i=1}^n V^\vee_i.$$

Finally, let us define the bilinear map
\begin{align*}
\theta\colon H^2(X,\C)\times H^2(X,\C) & \to H^4(X,\C) \\
(u,v) &\mapsto u\wedge v.
\end{align*}

\begin{rem}
\label{minimality V_i}
The $V_i$ and the $V_i^\vee$ are $g$-invariant subspaces defined over $\Q$; furthermore, if the roots of some $P_i$ are simple eigenvalues of $g_2$ (or, equivalently, if $P_i$ is a simple factor of the characteristic polynomial of $g_2$), then $V_i$ is minimal for such property: $\{ 0 \}$ is the only proper subspace of $V_i$ which is $g_2$-invariant and defined over $\Q$. The same holds for the action of $g_4$ on $V_i^\vee$.
\end{rem}

The goal of this section is to describe which are the possible (moduli of) roots of a given $P_i$, most importantly for the factor having $\lambda_1$ as a root. 

In what follows, we say for short that $\lambda,\lambda'\in \Lambda$ are conjugate if they are conjugate over $\Q$.

\begin{rem}
\label{sum W_i=0}
Since $P_i(0)=\pm 1$, we have
$$\prod_{\lambda\in \Lambda_i} \lambda =\pm 1, \qquad \sum_{w\in W_i} w=0.$$
\end{rem}

\begin{dfn}
Let $\lambda\in \Lambda$; we say that a weight $w\in W$ is conjugate to $\lambda$ if one of the conjugates of $\lambda$ has weight $w$. 
\end{dfn}

The main technical tool for the proofs in this section is the following basic result in Galois theory (see for example \cite{MR1878556}).

\begin{lemma}
\label{Galois}
Let $\alpha,\beta \in \overline \Q$ be two algebraic numbers. If $\alpha$ and $\beta$ are conjugate, then there exists $\rho\in \Gal(\overline \Q/\Q)=\{\rho\in \Aut(\overline \Q) \, | \, \rho_{|\Q} =\id _\Q\}$ such that $\rho(\alpha)=\beta$.
\end{lemma}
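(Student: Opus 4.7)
The plan is to produce $\rho$ in two stages: first build a field isomorphism between the subfields $\Q(\alpha)$ and $\Q(\beta)$ of $\overline{\Q}$ sending $\alpha$ to $\beta$, and then extend this isomorphism to an automorphism of all of $\overline{\Q}$.

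For the first stage, recall that saying $\alpha$ and $\beta$ are conjugate means they share the same minimal polynomial $P \in \Q[T]$ over $\Q$. Since $P$ is irreducible, the evaluation homomorphisms $\Q[T] \to \Q(\alpha)$ and $\Q[T] \to \Q(\beta)$, sending $T$ to $\alpha$ (respectively to $\beta$), both have kernel $(P)$ and therefore factor through field isomorphisms
$$\Q(\alpha) \cong \Q[T]/(P(T)) \cong \Q(\beta).$$
Composing one of these with the inverse of the other yields a field isomorphism $\sigma_0 \colon \Q(\alpha) \to \Q(\beta)$ which fixes $\Q$ pointwise and satisfies $\sigma_0(\alpha) = \beta$.

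For the second stage, observe that $\overline{\Q}$ is an algebraic closure of both $\Q(\alpha)$ and $\Q(\beta)$, because $\overline{\Q}/\Q$ is algebraic and $\overline{\Q}$ is algebraically closed. The classical extension theorem for algebraic closures asserts that any isomorphism $\sigma_0 \colon K \to K'$ between subfields of algebraic closures lifts to an isomorphism $\overline{K} \to \overline{K'}$ between the ambient algebraic closures; applied here, with $\overline{\Q}$ serving as algebraic closure on both sides, it yields an automorphism $\rho \in \Aut(\overline{\Q})$ extending $\sigma_0$. By construction $\rho|_\Q = \id_\Q$ and $\rho(\alpha) = \beta$, proving the lemma.

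The only nontrivial step is the lifting of $\sigma_0$ to $\overline{\Q}$: its standard proof is a Zorn's lemma argument on the poset of partial extensions of $\sigma_0$ to algebraic intermediate fields, and since this is a textbook fact (as the authors indicate by their citation to \cite{MR1878556}), I would not reproduce it in detail. Apart from this set-theoretic ingredient, the argument is essentially just the universal property of $\Q[T]/(P(T))$.
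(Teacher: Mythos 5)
Your proof is correct and is the standard two-step argument (the universal property of $\Q[T]/(P)$ followed by the isomorphism extension theorem for algebraic closures). The paper itself offers no proof of this lemma, only a citation to a textbook reference, so your argument simply fills in the standard proof that the citation points to.
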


Remark that, since elements of $\Gal(\overline \Q/\Q)$ act as the identity on $\Q$, the polynomials $P_i$ are fixed; in particular $\Gal(\overline \Q/\Q)$ acts by permutations on each $\Lambda_i$ and on each $W_i$.

Recall that  the rank $r(g)$ of $g$ (i.e. the split-rank of $G=\overline{\langle g \rangle}^{Zar}$, see Section \ref{split rank}), which is the number of multiplicative parameters necessary to describe the moduli of the eigenvalues of $g_2$, is no greater than $2$ by Lemma \ref{bound split rank}.\\
Furthermore, we saw in Corollary \ref{rank resonance} that $r(g)=2$ if and only if $\lambda_1$ and $\lambda_2$ are multiplicatively independent. We will distinguish the two cases $r(g)=2$ and $r(g)=1$.


\subsection{The case $r(g)=2$}

Let us treat first the case where the rank of $g$ (see Section \ref{split rank}) is equal to $2$.

We denote as usual by $w_1,w_2,w_3\in W$ the weights of the eigenvalues of $g_2$
$$\alpha_1:=\lambda_1,\quad \alpha_2:=\lambda_2\inv,\quad \alpha_3:=\lambda_1\inv \lambda_2,$$
 and fix non-null eigenvectors $v_1,v_2,v_3\in H^2(X,\R)$ for these eigenvalues.

\subsubsection{Algebraic properties of the eigenvalues}

\begin{lemma}
\label{r=2 conjugate weights non-null}
Let $r=2$ and $\lambda\in \Lambda$. If one of the conjugates of $\lambda$ has modulus $1$, then $\lambda$ is a root of unity.
\end{lemma}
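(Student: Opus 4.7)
The strategy is to apply Kronecker's theorem. Because both $g_2$ and $g_2^{-1}$ are defined over $\Z$, the eigenvalue $\lambda$ is an algebraic integer and its minimal polynomial $P \in \Z[T]$ is monic with $P(0) = \pm 1$; in particular $\prod_{\mu \,:\, P(\mu)=0} |\mu| = 1$. Kronecker's theorem (a non-zero algebraic integer all of whose conjugates have modulus at most one is a root of unity) then reduces the statement to showing that every conjugate of $\lambda$ has modulus exactly one.

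All roots of $P$ are eigenvalues of $g_2$, so they belong to $\Lambda$ and their weights to $W$. Writing $\tilde W(P) \subset W$ for the multiset of weights of roots of $P$, Remark~\ref{sum W_i=0} gives $\sum_{w\in \tilde W(P)} w = 0$, while the hypothesis translates as $0 \in \tilde W(P)$. The goal is to prove that the combination of these two facts, together with $r(g)=2$, forces $\tilde W(P) \subset \{0\}$; combined with Kronecker, this completes the proof.

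The key structural input is that, under $r(g)=2$, no non-zero element of $W$ has its opposite in $W$. Indeed, by Proposition~\ref{r=2 global}(4), every non-zero element of $W$ has the explicit form $w_j/(-2)^n$ with $j\in\{1,2,3\}$ and $n\ge 0$; asking that $-w_j/(-2)^n = w_k/(-2)^m$ forces either $k=j$ and $(-2)^{m-n}=-1$ (impossible) or, when $k\ne j$, a non-trivial linear relation between $w_1$ and $w_2$, which contradicts $r(g)=2$. The simplest case illustrating the use of this fact is Salem-like: if $\tilde W(P) = \{0,\ldots,0,w_j\}$, the vanishing-sum constraint would demand $-w_j \in \tilde W(P) \subset W$, which has just been ruled out.

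The main obstacle is the bookkeeping needed to rule out the more intricate weight configurations. I would expand $\sum_{w\in\tilde W(P)}w=0$ in the basis $(w_1,w_2)$, using $w_3=-w_1-w_2$, to obtain two rational linear equations on the multiplicities of the various $w_j/(-2)^n$ in $\tilde W(P)$. Then, invoking Lemma~\ref{Galois}, a Galois automorphism sending a unit-modulus root of $P$ to $\lambda$ induces a permutation of the roots of $P$ and hence of $\tilde W(P)$; combined with the uniqueness assertion Proposition~\ref{r=2 global}(2) (which pins down $\lambda_1,\lambda_2^{-1},\lambda_1^{-1}\lambda_2$ as the only eigenvalues of their respective moduli), this should reduce every configuration to the Salem prototype above. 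Induction on $\max\{n : w_j/(-2)^n \in \tilde W(P) \text{ for some }j\}$, peeling off a maximal-magnitude weight at each step and using the absence of its opposite in $W$ to cascade into a contradiction, should conclude.
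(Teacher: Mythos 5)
Your setup is correct: Kronecker plus the observation that $g_2$ and $g_2^{-1}$ are integral reduces the lemma to proving that every conjugate of $\lambda$ has modulus one, and you correctly identify the decisive structural fact, namely that Proposition~\ref{r=2 global}(4) forbids a non-zero weight $w$ from having $-w$ also in $W$ when $r=2$. This is exactly the contradiction the paper exploits.

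The gap is in how you propose to reach that contradiction. Your plan rests on the vanishing-sum constraint $\sum_{w\in\tilde W(P)} w=0$ from Remark~\ref{sum W_i=0}, a loosely specified ``Galois permutes $\tilde W(P)$'' step, and an induction on $\max\{n\}$. But the vanishing-sum constraint alone is far too weak: the configuration $\tilde W(P)=\{0,w_1,w_2,w_3\}$ (or $\{0,\dots,0,w_1,w_2,w_3\}$ with extra zeros) already has sum zero, contains $0$, and does not put any $w$ and $-w$ simultaneously in $W$, so nothing in your plan contradicts it. Likewise, merely observing that a Galois automorphism permutes the roots of $P$ does not extract new weights; you have to apply the automorphism to a specific multiplicative identity. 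The paper's proof does precisely that and is a one-liner you are circling but never state: if $\mu$ is a conjugate of $\lambda$ with $|\mu|=1$, then $\bar\mu$ is also a root of $P$ (real coefficients) and $\mu\bar\mu=1$; taking $\rho\in\Gal(\overline{\Q}/\Q)$ with $\rho(\mu)=\lambda$ gives $\lambda\cdot\rho(\bar\mu)=1$, so $\lambda^{-1}=\rho(\bar\mu)$ is also a root of $P$, hence $\lambda^{-1}\in\Lambda$. If $|\lambda|\neq 1$, this forces both $w_\lambda\neq 0$ and $-w_\lambda$ to lie in $W$ --- the contradiction you had ready. Without applying $\rho$ to the relation $\mu\bar\mu=1$, the argument does not close, and the ``peeling off'' induction as described has no mechanism to eliminate the problematic configurations.
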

\begin{proof}
By a lemma of Kronecker,
if all the conjugates of an algebraic integer $\lambda$ have modulus $1$, then $\lambda$ is a root of unity. Therefore, we only need to show that, if a conjugate of $\lambda$ has modulus $1$, then $\lambda$ has also modulus $1$. Suppose by contradiction that this is not the case, and let $\mu$ be a conjugate of $\lambda$ such that $|\mu|=1$.\\
Let $\rho\in \Gal(\overline \Q,\Q)$ be such that $\rho(\mu)=\lambda$; since $\mu \cdot\bar \mu=1$, we have
$$\lambda \cdot \rho(\bar \mu)=1,$$
so that $\rho(\bar \mu)=\lambda\inv$. In terms of weights, this means that $w_\lambda$ and $w_{\lambda\inv}=-w_\lambda$ are both non-trivial weights of $W$. This contradicts Proposition \ref{r=2 global} and concludes the proof.
\end{proof}

\begin{prop}
\label{r=2 conjugates}
Let $r=2$. Then for all $1\leq k \leq n$ there exist $n_i=n_i(k)$, $i=1,2,3$, such that, without taking multiplicities into account,
$$W_k\subset \left\{ \frac{w_1}{(-2)^{n_1}}, \frac{w_1}{(-2)^{n_1+1}}, \frac{w_2}{(-2)^{n_2}}, \frac{w_2}{(-2)^{n_2+1}}, \frac{w_3}{(-2)^{n_3}}, \frac{w_3}{(-2)^{n_3 +1}} \right\}.$$
\end{prop}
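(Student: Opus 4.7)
The strategy is to combine the rigid classification of weights in $W$ from Proposition \ref{r=2 global} with the Galois-theoretic rigidity of the orbit $\Lambda_k$, which is both a single Galois orbit and the spectrum of $g_2$ restricted to the $\Q$-subspace $V_k$ (cf.\ Remark \ref{minimality V_i}).

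First I handle the degenerate case: if some $\lambda\in\Lambda_k$ has $|\lambda|=1$, then Lemma \ref{r=2 conjugate weights non-null} forces all conjugates to be roots of unity, so $W_k=\{0\}$ and the statement holds trivially (with the convention that one takes ``$n_i=\infty$'', so that $w_i/(-2)^{n_i}=0$).

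Assuming no element of $\Lambda_k$ is a root of unity, every $w\in W_k$ writes uniquely as $w_{i(w)}/(-2)^{m(w)}$ with $i(w)\in\{1,2,3\}$ and $m(w)\geq 0$, by Proposition \ref{r=2 global}(4). For each ray $i$ set
$$S_i(k):=\{m(w):w\in W_k,\ i(w)=i\},\qquad n_i(k):=\min S_i(k)$$
(the minimum when $S_i(k)$ is non-empty, and $n_i(k)=\infty$ otherwise). The proposition reduces to the claim that $S_i(k)\subseteq\{n_i(k),n_i(k)+1\}$ for each $i\in\{1,2,3\}$.

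To establish this bound I argue by contradiction: suppose $\lambda,\mu\in\Lambda_k$ realize weights $w_i/(-2)^{n_i(k)}$ and $w_i/(-2)^M$ respectively, with $M\geq n_i(k)+2$. The plan is to form the algebraic integer $\gamma:=\lambda^{(-2)^{n_i(k)}}$, which by definition of weight satisfies $|\gamma|=\lambda_i$, and to exploit the Galois conjugate $\rho(\gamma)=\mu^{(-2)^{n_i(k)}}$ (for any $\rho\in\Gal(\overline\Q/\Q)$ sending $\lambda$ to $\mu$). This conjugate has modulus $\lambda_i^{1/(-2)^{M-n_i(k)}}$, i.e.\ a genuine $2^{M-n_i(k)}$-th radical of $\lambda_i$, whereas the eigenvalues of $g_2$ themselves realize only the exponents listed in Proposition \ref{r=2 global}(4). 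I would then iterate Lemma \ref{key lemma} applied to wedge products of eigenvectors inside $V_k$ together with their complex conjugates: each non-vanishing wedge forces a new element into $\Lambda$ whose weight is controlled, and the resulting tower of weights must match the moduli of the Galois conjugates of $\gamma$. The contradiction will appear because a deeply-nested radical $\lambda_i^{1/(-2)^{M-n_i(k)}}$ with $M-n_i(k)\geq 2$ cannot coexist with the finite spectrum of $g$, since the iterated wedge classes would eventually produce weights outside the set parametrised in Proposition \ref{r=2 global}(4).

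\textbf{Main obstacle.} The principal difficulty is that the Galois action on $\Lambda_k$ does not commute with complex conjugation in general, so one cannot mechanically propagate a relation $v_\lambda\wedge\bar v_\lambda=0$ from $\lambda$ to $\mu=\rho(\lambda)$; I expect this to require choosing the eigenvectors compatibly with the Hodge decomposition in each Galois orbit (Remark \ref{Jordan form Hodge}) and using that $V_k$ is defined over $\Q$ to transport the wedge-product constraint across $\rho$. Ensuring that the iteration of Lemma \ref{key lemma} terminates within a bounded number of steps, controlled by the 2-adic denominator of $M-n_i(k)$, is the delicate bookkeeping step that will close the argument.
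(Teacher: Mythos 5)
Your reduction to showing $S_i(k)\subseteq\{n_i(k),n_i(k)+1\}$ for each ray $i$, and your handling of the root-of-unity case via Lemma \ref{r=2 conjugate weights non-null}, are both correct and match the paper's framing. But the central step — ruling out a pair of conjugate weights $w_i/(-2)^{n_i}$ and $w_i/(-2)^M$ with $M\ge n_i+2$ — is not actually carried out, and the plan you sketch for it will not close the argument.

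Two concrete problems. First, the auxiliary quantity $\gamma=\lambda^{(-2)^{n_i}}$ is an algebraic integer but \emph{not} an eigenvalue of $g_2$, so the moduli of its Galois conjugates are not constrained by the classification of $W$ in Proposition \ref{r=2 global}(4). You appeal to that classification to say a ``deeply-nested radical $\lambda_i^{1/(-2)^{M-n_i}}$ cannot coexist with the finite spectrum,'' but $\lambda_i^{1/(-2)^M}$ is precisely a modulus that \emph{does} occur in the spectrum (it is $|\mu|$). Nothing forbidden is produced. Second, iterating Lemma \ref{key lemma} on wedges in $V_k$ can only ever output weights already catalogued in Proposition \ref{r=2 global}(4); it cannot force a weight outside that set, so there is no contradiction to be found along that route. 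The obstacle you flag (Galois does not commute with complex conjugation) is real but is not what stops your argument — it is that your candidate invariant is a non-eigenvalue whose conjugate moduli are unconstrained.

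The missing idea is arithmetic rather than geometric. One should record the multiplicative relation $\lambda\bar\lambda=(\lambda'\bar\lambda')^{(-2)^k}$ (with $k\ge 2$), apply a Galois automorphism $\rho$ chosen so that the weight of $\rho(\lambda)$, written as $w_a/(-2)^{n_a}$, has $n_a$ \emph{maximal}, and keep track of all four images $\rho(\lambda),\rho(\bar\lambda),\rho(\lambda'),\rho(\bar\lambda')$ as bona fide eigenvalues with weights $w_a/(-2)^{n_a},\dots,w_d/(-2)^{n_d}$. Clearing denominators in the resulting additive relation gives
$$w_a+(-2)^{n_a-n_b}w_b=(-2)^{k+n_a-n_c}w_c+(-2)^{k+n_a-n_d}w_d,$$
with all exponents nonnegative by maximality of $n_a$. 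Since $k\ge 2$, the right-hand side lies in $4\Gamma$ where $\Gamma=\Z w_1\oplus\Z w_2$, but the left-hand side never does for $w_a,w_b\in\{w_1,w_2,-w_1-w_2\}$; this is the contradiction. Your write-up never reaches a statement of this type, so the proof is incomplete.
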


\begin{proof}
Let $\lambda\in \Lambda_i$, and let $w=w_\lambda$ be its weight. We will prove that if a weight $w'$  collinear to $w$ is conjugate to $\lambda$, then
$$w'\in \left\{ -\frac w2, w, -2w \right\}.$$
The claim then follows easily.

Suppose by contradiction that $w'=w_{\lambda'}\notin \{-w/2,w,-2w\}$; remark first that by Lemma \ref{r=2 conjugate weights non-null} $w$ and $w'$ are both non-trivial. By Proposition \ref{r=2 global}, after maybe swapping $\lambda$ and $\lambda'$, we have
$$w'=(-2)^k w, \quad k\geq 2,$$
which means that
$$\lambda \bar \lambda=(\lambda' \bar \lambda')^{(-2)^k}.$$

Now, let $\rho\in \Gal (\overline \Q/\Q)$ be an automorphism such that $\rho(\lambda)$ is a conjugate of $\lambda$ whose weight can be written as $w_a/(-2)^{n_a}$, $a\in \{1,2,3\}$, with $n_a$ maximal. Let $\alpha,\beta,\gamma,\delta$ denote the images of $\lambda,\bar \lambda,\lambda', \bar \lambda$ under $\rho$, and let
$$w_\alpha=\frac{w_a}{(-2)^{n_a}},\quad w_\beta=\frac{w_b}{(-2)^{n_b}},\quad w_\gamma= \frac{w_c}{(-2)^{n_c}},\quad  w_\delta=\frac{w_d}{(-2)^{n_d}}$$
denote their weights; here   $a,b,c,d\in \{1,2,3\}, n_a,n_b,n_c,n_d\geq 0$ and $n_a$ is maximal. Since
$$\alpha \beta=(\gamma \delta)^{(-2)^k},$$
in terms of weights we get
$$\frac{w_a}{(-2)^{n_a}} + \frac{w_b}{(-2)^{n_b}}= (-2)^k \frac{w_c}{(-2)^{n_c}} + (-2)^k \frac{w_d}{(-2)^{n_d}},$$
so that
$$w_a + (-2)^{n_a-n_b} w_b = (-2)^{k+n_a-n_c} w_c + (-2)^{k-n_a-n_d} w_d.$$

Let $\Gamma=\Z w_1 \oplus \Z w_2\subset \R^2$ be the lattice generated by $w_1,w_2$. Since $k \geq 2$ we have
$$w_a + (-2)^{n_a-n_b} w_b \equiv 0 \qquad \mod 4\Gamma,$$
which is impossible. This leads to a contradiction and concludes the proof.
\end{proof}

\begin{cor}
Let $r=2$ and $\lambda \in \Lambda$. If $\lambda$ is not a root of unity, then its degree over $\Q$ is a multiple of $3$.
\end{cor}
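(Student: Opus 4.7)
The strategy is to count the Galois conjugates of $\lambda$ (the distinct roots of its minimal polynomial $P_k$) according to the direction of their weights in $\R^2$, and then to combine the vanishing of the weight sum over this conjugate set with the elementary congruence $-2 \equiv 1 \pmod 3$.

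Since $\lambda$ is not a root of unity, Lemma \ref{r=2 conjugate weights non-null} ensures no Galois conjugate of $\lambda$ is either, so every conjugate $\mu$ has nonzero weight $w_\mu$. By Proposition \ref{r=2 conjugates}, the weight of each conjugate lies in the six-element set $\{u_i,\, -u_i/2 : i = 1, 2, 3\}$, where $u_i := w_i/(-2)^{n_i}$. Let $a_i$ be the number of conjugates with weight $u_i$ and $b_i$ the number with weight $-u_i/2$, so that $d := \deg_\Q \lambda = \sum_{i=1}^{3}(a_i + b_i)$. Since $\lambda$ is an algebraic unit, the constant term of $P_k$ is $\pm 1$, and hence the sum of the weights over the distinct roots of $P_k$ vanishes:
\[
\sum_{i=1}^{3} \frac{a_i - b_i/2}{(-2)^{n_i}}\, w_i = 0.
\]
Substituting $w_3 = -w_1 - w_2$ and using the linear independence of $w_1, w_2$, I conclude that $c' := (a_i - b_i/2)/(-2)^{n_i}$ takes a common value for $i = 1, 2, 3$.

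If some direction is unpopulated ($a_i = b_i = 0$ for some $i$), then necessarily $c' = 0$, hence $b_j = 2 a_j$ for every populated direction $j$, and $d = 3\sum_j a_j$ is divisible by $3$. Otherwise all three directions are populated; set $n^* := \min_i n_i$ and $K := 2 c'(-2)^{n^*}$, which equals the integer $2 a_{i^*} - b_{i^*}$ for the minimizing index $i^*$. Substituting $b_i = 2 a_i - K (-2)^{n_i - n^*}$ into the expression for $d$ gives
\[
d = 3 \sum_{i=1}^{3} a_i - K \sum_{i=1}^{3}(-2)^{n_i - n^*}.
\]
Since $-2 \equiv 1 \pmod 3$ and $n_i - n^* \geq 0$, each term $(-2)^{n_i - n^*}$ is congruent to $1$ modulo $3$, so $\sum_i (-2)^{n_i - n^*} \equiv 3 \equiv 0 \pmod 3$. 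Therefore $3 \mid K \sum_i (-2)^{n_i - n^*}$, and $3 \mid d$.

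The main technical step is the linear-algebra extraction of the common constant $c'$ from the weight-sum relation; once this is done, the mod-$3$ observation closes the argument in one line. The only care required is the case-split on how many directions of $\R^2$ are populated by conjugates of $\lambda$, but both cases collapse cleanly: any absent direction forces $c' = 0$, while if all three directions are present the $3$-divisibility of $\sum_i(-2)^{n_i - n^*}$ yields the result.
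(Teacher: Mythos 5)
Your proof is correct and follows essentially the same route as the paper: count the conjugates according to which of the six weight directions they occupy, use the weight-sum relation $\sum_{w\in W_k}w=0$ together with the unique linear dependency $w_1+w_2+w_3=0$ to extract a common coefficient, and then reduce modulo $3$ using $-2\equiv 1\pmod 3$. The only difference is that you make the final congruence step more explicit by clearing denominators with $K:=2c'(-2)^{n^*}$, whereas the paper writes the analogous claim $k_i+h_i\equiv c\pmod 3$ somewhat tersely with $c\in\Z[1/2]$; your formulation is a welcome clarification of that step, and the case split you perform, while harmless, is in fact unnecessary since the $K$-argument covers $c'=0$ as well.
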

\begin{proof}
Fix $\lambda\in \Lambda$ which is not a root of unity, and let $P_j$ be the unique factor of $P$ having $\lambda$ as a root; according to Proposition \ref{r=2 conjugates}, there exist $n_1,n_2,n_3\geq 0$ such that the weights of conjugates of $\lambda$ are elements of the set
$$\left\{ \frac{w_1}{(-2)^{n_1}}, \frac{w_1}{(-2)^{n_1+1}}, \frac{w_2}{(-2)^{n_2}}, \frac{w_2}{(-2)^{n_2+1}}, \frac{w_3}{(-2)^{n_3}}, \frac{w_3}{(-2)^{n_3 +1}} \right\}.$$
Since by Remark \ref{sum W_i=0} we have
$$\sum_{w\in W_i} w=0,$$
we get
$$ \left(\frac{k_1}{(-2)^{n_1}} + \frac{h_1}{(-2)^{n_1+1}}\right) w_1+\left(\frac{k_2}{(-2)^{n_2}} + \frac{h_2}{(-2)^{n_2+1}}\right) w_2+\left(\frac{k_3}{(-2)^{n_3}} + \frac{h_3}{(-2)^{n_3+1}}\right) w_3 = 0,$$
where the $k_i$ and the $h_i$ are the multiplicities of the weights in $W_j$. Since the only linear dependency among the $w_i$ is $w_1+w_2+w_3=0$, this implies that there exists a constant $c\in \Z[1/2]$ such that
$$\frac{k_i}{(-2)^{n_i}} +  \frac{h_i}{(-2)^{n_i+1}}=c, \qquad i = 1,2,3.$$
This implies that 
$$k_i + h_i \equiv c \qquad \mod 3,$$
so that in particular $\sum_i (k_i + h_i)\equiv 0$ modulo $3$.
\end{proof}

\subsubsection{Algebraic properties of $\lambda_1$}

Now let us focus on the factor $P_1$ having $\lambda_1$ as a root. Recall that we denoted by $\theta$ the map on $H^2(X,\C) \times H^2(X,\C)$ defined by
$$\theta\colon (u, v) \mapsto u\wedge v \in H^4(X,\C).$$

\begin{lemma}
\label{lemma:image theta V_1}
Let $r=2$, and let $P_1$ be the factor of $P$ having $\lambda_1$ as a root. If either $v_2,v_3\in V_1$ or $v_2,v_3\notin V_1$, then
$$\theta(V_1\times V_1)=V_1^\vee.$$
If either $v_2\in V_1,v_3\in V_i$ or $v_2\in V_i,v_3\in V_1$ for some $i\neq 1$, then
$$\theta(V_1\times V_1)=V_1^\vee \oplus V_i^\vee.$$
\end{lemma}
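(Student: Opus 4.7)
My strategy starts with the structural observation that $\theta(V_1\times V_1)$ is a $g_4$-invariant $\mathbb{Q}$-rational subspace of $H^4(X,\mathbb{C})$: invariance comes from $g_4(u\wedge v)=g_2(u)\wedge g_2(v)$, and $\mathbb{Q}$-rationality follows because both $V_1$ and the wedge product are defined over $\mathbb{Q}$. Since $\lambda_1$ is a simple eigenvalue of $g_2$ by Proposition~\ref{r=2 global}(2) and Galois permutes conjugates while preserving $g_2$-multiplicities, every root of $P_1$ is a simple eigenvalue; the analogous statement holds for $P_i$ when $v_3\in V_i$ (or $v_2\in V_i$), since $\lambda_1^{-1}\lambda_2$ is also simple. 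Remark~\ref{minimality V_i} then forces each relevant $V_j^\vee$ to be minimal as a $g$-invariant $\mathbb{Q}$-rational subspace of $H^4$, so $\theta(V_1\times V_1)=\bigoplus_{j\in J}V_j^\vee$ for an index set $J$ that I must identify.

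The basic mechanism is that each nonzero wedge $v_\gamma\wedge v_\delta$ with $\gamma,\delta\in\Lambda_1$ is an eigenvector of $g_4$ of eigenvalue $\gamma\delta$, and so it lies in $V_j^\vee$ for the unique $j$ with $(\gamma\delta)^{-1}\in\Lambda_j$. A key preliminary computation is $v_i\wedge v_i=0$ for $i=1,2,3$: otherwise $\alpha_i^{-2}$ would be an eigenvalue of $g_2$ of weight $-2w_i$, but $-2w_i\notin W$ by Proposition~\ref{r=2 global}(4) combined with the $\mathbb{R}$-linear independence of $w_1,w_2$. The same eigenvalue constraint forces $v_1,v_2,v_3\in H^{1,1}(X,\mathbb{R})$, after which Corollary~\ref{non-null wedge} yields $v_i\wedge v_j\neq 0$ for distinct $i,j\in\{1,2,3\}$.

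With this in hand I identify $J$ case by case. When $v_2,v_3\in V_1$, the wedge $v_2\wedge v_3$ has eigenvalue $\lambda_1^{-1}$ whose inverse is $\lambda_1\in\Lambda_1$, so $V_1^\vee\subseteq\theta(V_1\times V_1)$. When $v_2\in V_1$ and $v_3\in V_i$, the wedge $v_1\wedge v_2$ has eigenvalue $\lambda_1\lambda_2^{-1}$ with inverse $\lambda_1^{-1}\lambda_2\in\Lambda_i$, yielding $V_i^\vee\subseteq\theta(V_1\times V_1)$. For the $V_1^\vee$ inclusion whenever $\{v_2,v_3\}\not\subseteq V_1$, I appeal to $\deg P_1\geq 3$ (the corollary to Proposition~\ref{r=2 conjugates}) together with a weight-arithmetic count using Proposition~\ref{r=2 conjugates} and $\sum W_1=0$ (Remark~\ref{sum W_i=0}) to show that the weight $-w_1/2$ must appear in $W_1$ (because the other admissible weights have no nonzero component along $\mathbb{R}w_1$ unless they contribute less than the required $-w_1$ in $w_1$-coordinate, as $\sum\pm 2^{-n}$ over any finite positive combination is bounded away from $1$). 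Picking any $\mu\in\Lambda_1$ with $w_\mu=-w_1/2$, the wedge $v_1\wedge v_\mu$ is nonzero by Corollary~\ref{non-null wedge} (since $v_1\wedge v_1=v_\mu\wedge v_\mu=0$, the latter because the weight of $\mu^{-2}$ is $w_1$ and the simplicity of $\lambda_1$ would force $\mu^2=\lambda_1^{-1}$, which leads to an immediate contradiction when $\mu$ is non-real and to a direct identification when $\mu$ is real); its $g_4$-eigenvalue $\lambda_1\mu$ has inverse of weight $-w_1/2\in W_1$, placing it in $V_1^\vee$.

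The main obstacle is the upper bound on $J$: ruling out extraneous $V_k^\vee$ with $k\neq 1$ (and $k\neq i$ in the mixed case). This requires a systematic enumeration of the admissible pair-sums $w_\gamma+w_\delta$ for $\gamma,\delta\in\Lambda_1$ using the finite description of $W_1$ provided by Proposition~\ref{r=2 conjugates}: for each sum whose negative lies in $W$, I would check that the negative actually sits in $W_1$ (resp.\ $W_1\cup W_i$), invoking the simplicity of the weights $w_1,w_2,w_3$ (Proposition~\ref{r=2 global}(2)) to identify individual eigenvalues and Galois-stability of $\theta(V_1\times V_1)$ to promote single identifications into full-orbit inclusions.
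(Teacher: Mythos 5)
Your preliminary observations are correct and match the paper's setup: $\theta(V_1\times V_1)$ is $g_4$-invariant and $\mathbb{Q}$-rational, the roots of $P_1$ (and of $P_i$ in the mixed case) are simple by Galois-stability of multiplicities, Remark~\ref{minimality V_i} gives minimality of $V_1^\vee$ and $V_i^\vee$, and the computation $v_i\wedge v_i=0$ (hence $v_i\in H^{1,1}$ and $v_i\wedge v_j\neq 0$ for $i\neq j$) is a valid and useful warm-up. The lower-bound identifications $v_2\wedge v_3\in V_1^\vee$ when $v_2,v_3\in V_1$ and $v_1\wedge v_2\in V_i^\vee$ when $v_2\in V_1$, $v_3\in V_i$ are also correct.

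However, the proof has two genuine gaps, both concentrated where you acknowledge the difficulties lie. First, the decomposition $\theta(V_1\times V_1)=\bigoplus_{j\in J}V_j^\vee$ only follows from minimality for the ``relevant'' indices $j\in\{1,i\}$; for other $k$, $V_k^\vee$ need not be minimal, so you must instead prove $\theta(V_1\times V_1)\cap V_k^\vee=\{0\}$ directly. You defer this to ``a systematic enumeration of admissible pair-sums'', but this is not carried out, and it is the crux of the lemma. In particular, knowing that a pair-sum $-(w_\gamma+w_\delta)$ lies in $W$ does not tell you which $\Lambda_k$ it lands in unless that weight is simple; resolving this in general requires Galois-theoretic arguments of the kind the paper uses in Lemma~\ref{lemma V_1 x V_1 subset V_j}. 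Second, your claimed derivation of $-w_1/2\in W_1$ from $\sum_{W_1}w=0$ and Proposition~\ref{r=2 conjugates} is asserted but not proved, and it is not a direct count: one must also invoke equation $\sum_{W_1}w=0$ coordinate-wise in the $(w_1,w_2)$-basis together with the alternating-sign structure of $W$ and the constraints of Lemma~\ref{key lemma weights}, and even then the argument is delicate. (As a side remark, in the step showing $v_1\wedge v_\mu\neq 0$ you want $v_\mu\wedge\bar v_\mu=0$, which follows cleanly because $|\mu|^2$ has weight $-w_1\notin W$; your stated argument about $v_\mu\wedge v_\mu$ and $\mu^2=\lambda_1^{-1}$ addresses the wrong wedge and does not close that subcase.)

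The paper's proof avoids the casework you are trying to do on the codomain side by dualizing the minimality argument to the domain: for the projection $\pi$ away from $V_1^\vee$ (or $V_1^\vee\oplus V_i^\vee$), the kernel $\ker(\pi\circ\theta_1)=\{u\in V_1: \pi(u\wedge v)=0\ \forall v\in V_1\}$ is a $g_2$-invariant $\mathbb{Q}$-rational subspace of $V_1$, hence equal to $\{0\}$ or $V_1$ by minimality. It therefore suffices to verify $\pi\circ\theta(v_1,v)=0$ for \emph{eigenvectors} $v\in V_1$, i.e.\ to check only the pairs involving $v_1$, which is a short weight computation. This single observation collapses the pair-sum enumeration you are anticipating; you would do well to adopt it, and then your remaining task reduces to the $\supseteq$ inclusion, for which the paper's use of Lemma~\ref{key lemma} (nontriviality from $\dim V_1\geq 2$) together with Lemma~\ref{lemma V_1 x V_1 subset V_j} supplies what your $-w_1/2\in W_1$ claim was reaching for.
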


\begin{proof}
Without loss of generality, in the second case we may assume that $i=2$.

Let us first prove the $\subseteq$ inclusions. Denote by $\theta_1$ the restriction of $\theta$ to $V_1\times V_1$; let
$$\pi_1\colon V^\vee=\bigoplus_{i=1}^n V_i^\vee\to \bigoplus_{i=2}^n V_i^\vee$$
be the projection onto the last $n-1$ factors and
$$\pi_{1,2}\colon V^\vee=\bigoplus_{i=1}^n V_i^\vee\to \bigoplus_{i=3}^n V_i^\vee$$
be the projection onto the last $n-2$ factors.\\
For $\pi\in \{\pi_1,\pi_{1,2}\}$, the subspace
$$\ker (\pi \circ \theta_1):= \{u\in V_1 \, ; \, \pi\circ \theta(u,v)=0 \quad \text{for all }v\in V_1 \}\subset V_1$$
is $g_2$-invariant and defined over $\Q$. By minimality of $V_1$ (see Remark \ref{minimality V_i}), we then have either $\ker (\pi \circ \theta_1)=0$ or $\ker (\pi \circ \theta_1)=V_1$. Therefore, in order to show the inclusions, we only need to prove that $v_1\in \ker (\pi \circ \theta_1)$ ($\pi=\pi_1$ in the first case and $\pi=\pi_{1,2}$ in the second case); since $g_2$ is semisimple, it is enough to check that $\pi\circ \theta(v_1,v)=0$ for all eigenvectors $v\in V_1$.

Let $\beta$ be the eigenvalue associated to an eigenvector $v\in V_1$, and let $w=w_\beta$ be its weight. We distinguish the following  subcases:
\begin{itemize}
\item $w=0$ is excluded by Lemma \ref{r=2 conjugate weights non-null};
\item if $w\notin \{w_2,w_3,-w_1/2\}$, then $-w_1-w\notin W$, so that $v_1 \wedge v=0$ and in  particular $\pi\circ \theta(v_1,v)=0$;
\item if $w=-w_1/2$ and $v_1\wedge v\neq 0$, then $v_1\wedge v$ is an eigenvector with eigenvalue $\beta \lambda_1=\bar \beta\inv$. Since $\beta$ is conjugated to $\lambda_1$, so is $\bar \beta$, and thus $v\wedge v_1\in V_1^\vee$ and $\pi\circ \theta(v_1,v)=0$;
\item if $w=w_2$, by simplicity of the weight $w_2$ we have $\beta=\alpha_2$; in particular $v_2\in V_1$. Then $v_1\wedge v=v_1\wedge v_2$ is an eigenvector for $g_4$ with eigenvalue $\alpha_1\alpha_2=\alpha_3\inv$. If also $v_3\in V_1$, then $v_1\wedge v\in V_1^\vee$; if $v_3\notin V_1$, say $v_3\in V_2$, then $v_1\wedge v\in V_2^\vee$. In both cases, choosing the right $\pi \in \{\pi_1,\pi_{1,2}\}$ we get $\pi\circ \theta (v_1,v)=0$;
\item the case $w=w_3$ is analogous to the case $w=w_2$.
\end{itemize}
This concludes the proof of the $\subseteq$ inclusions.

Let us now prove the other inclusions $\supseteq$.\\
Suppose first that either $v_2,v_3\in V_1$ or $v_2,v_3\notin V_1$, so that $\theta(V_1\times V_1)\subseteq V_1^\vee$. Since $\theta(V_1\times V_1)$ is $g$-invariant and defined over $\Q$, by minimality of $V_1^\vee$ we only need to show that $\theta(V_1\times V_1)\neq \{0\}$. Since $\dim V_1\geq 2$, this follows from Lemma \ref{key lemma}.\\
Now suppose that $v_2\in V_1,v_3\in V_i$ for some $i\neq 1$ (the proof of the other case being analogous), so that $\theta(V_1\times V_1)\subseteq V_1^\vee \oplus V_i^\vee$. Then $v_1\wedge v_2\in V^\vee$ is an eigenvector with eigenvalue $\alpha_3\inv$, so that $v_1\wedge v_2\in V_i^\vee$. Since $\theta(V_1\times V_1)$ is $g$-invariant and defined over $\Q$, by minimality of $V_1^\vee$ and $V_i^\vee$ we have either
$$\theta(V_1\times V_1)=V_i^\vee$$
or 
$$\theta(V_1\times V_1)=V_1^\vee \oplus V_i^\vee.$$
The first case contradicts Lemma \ref{lemma V_1 x V_1 subset V_j} below, so equality must hold and the proof is complete.
\end{proof}

\begin{lemma}
\label{lemma V_1 x V_1 subset V_j}
Let $r=2$, and let $P_1$ be the factor of $P$ having $\lambda_1$ as a factor. Suppose that
$$\theta(V_1\times V_1)\subseteq V_i^\vee$$
for some $1\leq i\leq n$. Then $i=1$.
\end{lemma}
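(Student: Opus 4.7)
The plan is to argue by contradiction: assume that $\theta(V_1 \times V_1)$ is a nonzero subspace contained in $V_i^\vee$ for some $i \neq 1$ (the inclusion is vacuous when $\theta(V_1 \times V_1) = 0$, but in the intended application it is used in the form of an equality). The contradiction will be obtained by exhibiting a nonzero element of $\theta(V_1 \times V_1) \cap V_1^\vee$, which must vanish since $V_j^\vee \cap V_k^\vee = 0$ for $j \neq k$.

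First I would translate the hypothesis via Poincar\'e duality. The top cup-product pairing $H^2(X,\C) \times H^4(X,\C) \to H^6(X,\C) \cong \C$ makes the decomposition $H^4 = \bigoplus V_j^\vee$ dual to $H^2 = \bigoplus V_j$: an eigenvector $u \in V_j$ of $g_2$ with eigenvalue $\mu$ pairs with an eigenvector $\eta \in V_k^\vee$ of $g_4$ with eigenvalue $\nu$ only if $\mu\nu = 1$, forcing $\mu$ to be a common root of $P_j$ and $P_k$, hence $j = k$ by irreducibility. Consequently $V_i^\vee$ annihilates $V_1$ when $i \neq 1$, and the hypothesis becomes the vanishing of the trilinear cup product
$$\int_X u \wedge v \wedge w = 0 \qquad \text{for all } u, v, w \in V_1.$$

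Next I would search for eigenvectors $v', v'' \in V_1$ with eigenvalues $\mu', \mu''$ satisfying $\mu' \mu'' = \mu^{-1}$ for some $\mu \in \Lambda_1$ (so that $v' \wedge v'' \in V_1^\vee$ when nonzero), i.e.\ for a triple $(\mu, \mu', \mu'') \in \Lambda_1^3$ with $\mu \mu' \mu'' = 1$. The central input here is Remark \ref{sum W_i=0}, giving $\prod_{\nu \in \Lambda_1} \nu = \pm 1$. If $\deg P_1 = 3$ this is itself the desired triple relation; the degree-$1$ case is vacuous; and $\deg P_1 = 2$ is excluded by an argument analogous to Lemma \ref{r=2 conjugate weights non-null} (using the rank-two assumption and the maximality of $w_1$, as the only admissible $\mu$ paired with $\lambda_1$ would have to be $\pm\lambda_1^{-1}$, whose weight $-w_1$ is not in $W$). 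For $\deg P_1 \in \{4,5,6\}$ I would do a case analysis using Proposition \ref{r=2 conjugates} (restricting $W_1$ to six prescribed weights), the sum-zero relation, the simplicity of $w_1, w_2, w_3$ in $W$ given by Proposition \ref{r=2 global}(2), and crucially the irreducibility of $P_1$ over $\Q$, to produce the required triple in each subcase.

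Having such a triple, the non-vanishing $v_{\mu'} \wedge v_{\mu''} \neq 0$ would follow from Corollary \ref{non-null wedge} applied to the pair $(v_{\mu'}, v_{\mu''})$, noting that for the top-weight eigenvector $v_1 \wedge v_1 = 0$ forces off-diagonal wedges to be nonzero. The main obstacle I anticipate is the higher-degree case analysis: one must rule out ``pathological'' weight configurations in which no triple of elements of $\Lambda_1$ has unit product. The crucial leverage beyond pure weight combinatorics is the irreducibility of $P_1$ over $\Q$: for instance, a putative configuration such as $W_1 = \{w_1, w_2/4 \text{ (4 times)}, w_3/4 \text{ (4 times)}\}$ would force $P_1$ to factor as $(X-\lambda_1)(X^4-\lambda_2)(X^4-\lambda_1^{-1}\lambda_2)$ over $\Z$, contradicting irreducibility. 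Disposing of such configurations systematically should complete the argument.
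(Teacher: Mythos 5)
Your broad strategy is genuinely different from the paper's and is at least plausible in outline: you dualize the hypothesis into the vanishing of the cubic form $\int_X u\wedge v\wedge w$ on $V_1$ and try to produce a triple $(\mu,\mu',\mu'')\in\Lambda_1^3$ with $\mu\mu'\mu''=1$ so that a suitable wedge of eigenvectors lands (and is nonzero) in $V_1^\vee$. The paper instead first pins down, via the $\subseteq$ part of Lemma \ref{lemma:image theta V_1}, that $v_2\in V_1$ and $v_3\in V_2$, then uses the inclusion $\theta(V_1\times V_1)\subseteq V_2^\vee$ to exclude the weights $-w_1/2$, $-w_2/2$ and $w_3/(-2)^n$ ($n\ge 2$) from $W_1$ (an eigenvalue with one of these weights would force a $g_4$-eigenvalue in $V_2^\vee$ that is simultaneously an eigenvalue of $g_4$ on $V_1^\vee$, or a Galois obstruction via Lemma \ref{Galois}), leaving $W_1\subset\{w_1,w_2,-w_3/2\}$, which contradicts $\sum_{w\in W_1}w=0$ since $w_1,w_2$ are a basis and multiplicities are nonnegative.

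However, as written your argument has substantive gaps that cannot be waved away. First, $\deg P_1$ is not bounded: Proposition \ref{r=2 conjugates} bounds $W_1$ to six possible weight \emph{values}, but each can occur with arbitrary multiplicity (and indeed case $(3)$ of Proposition \ref{r=2 conjugates lambda_1} exhibits $\deg P_1=3+6k$ for any $k\ge 0$). So a case analysis stopping at $\deg P_1=6$ is structurally incomplete, and you explicitly defer the rest (``should complete the argument''). Second, your non-vanishing step misuses Corollary \ref{non-null wedge}: that corollary controls $u\wedge\bar u,\ u\wedge\bar v,\ v\wedge\bar v$, so from a triple with $\mu\mu'\mu''=1$ you do not directly get $v_{\mu'}\wedge v_{\mu''}\neq 0$; one must also rule out the other two conjugate-wedges, which requires an additional argument of exactly the sort the paper carries out. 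Third, Remark \ref{sum W_i=0} gives only $\prod_{\nu\in\Lambda_1}\nu=\pm 1$, and with the minus sign your triple relation reads $\mu\mu'\mu''=-1$, so $\mu'\mu''=-\mu^{-1}$ need not be a root of $P_1^\vee$; you need to handle the sign. Finally, your sample ``irreducibility'' argument is incorrect: the claimed factorization $(X-\lambda_1)(X^4-\lambda_2)(X^4-\lambda_1^{-1}\lambda_2)$ is not over $\Z$ (nor even over $\Q$), so it does not contradict irreducibility of $P_1$; knowing the moduli of the roots does not produce an integral factorization.
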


\begin{proof}
Assume by contradiction that $i\neq 1$, say $i=2$. By the $\subseteq$ inclusions in Lemma \ref{lemma:image theta V_1} (whose proof is independent on the result we want to prove here), we may then assume that $v_2\in V_1,v_3\in V_2$.

Let us prove first that $-w_1/2,-w_2/2\notin W_1$. Indeed, suppose for example that $-w_1/2\in W_1$, and let $v\in V_1$ be an eigenvector whose eigenvalue $\lambda$ has weight $-w_1/2$. Then by Lemma \ref{key lemma} we have $v\wedge \bar v\neq 0$, so that  $|\lambda|^{2}=\lambda_1\inv$ is an eigenvalue of the restriction of $g_4$ to $\theta(V_1\times V_1)=V_2^\vee$. This contradicts the fact that $\lambda_1\inv$ is an eigenvalue of $g_4$ restricted to $V_1^\vee$, and proves that $-w_1/2 \notin W_1$; the proof for $-w_2/2$ is analogous.

Now let us prove that $w_3/(-2)^n \notin W_1$ for $n\geq 2$. Suppose by contradiction that $v\in V_1$ is an eigenvector whose eigenvalue $\lambda$ has weight $-w_3/(-2)^n$, $n\geq 2$. Then by Lemma \ref{key lemma} $v\wedge \bar v\neq 0$ is a non-trivial eigenvector with eigenvalue $\mu=|\lambda|^2$; since $\theta(V_1\times V_1)=V_2^\vee$, $\mu$ is conjugated to $\mu'=\alpha_3\inv$, and these two algebraic integers satisfy an algebraic equation
$$\mu^{(-2)^N}=\mu' \qquad N\geq 1.$$
Using Lemma \ref{Galois} it is not hard to see that for this to happen we need to have $\mu=\mu'=1$, a contradiction. This proves that $w_3/(-2)^n\notin W_1$ for $n \geq 2$.

Now, by Proposition \ref{r=2 conjugates} this implies that, up to multiplicities,
$$W_1\subset \left\{ w_1,w_2,-\frac{w_3}2 \right\}.$$
This however contradicts the equation
$$\sum_{w\in W_1} w=0.$$
The claim is then proven.
\end{proof}

\begin{rem}
\label{permute indices}
 Lemma \ref{lemma:image theta V_1} and \ref{lemma V_1 x V_1 subset V_j} still hold if one permutes $\alpha_1,\alpha_2$ and $\alpha_3$; the proofs are completely analogous.
\end{rem}

We are ready to state and prove Theorem \ref{thm:conjugates lambda_1} in the case where $\lambda_1$ and $\lambda_2$ are multiplicatively independent (i.e. the rank of $g$ is equal to $2$).

\begin{prop}
\label{r=2 conjugates lambda_1}
Let $g\in \GL(H^2(X,\R))$ be a semisimple linear automorphism preserving the cohomology graduation, Hodge decomposition, wedge product and Poincar\'e duality, and such that $g$ and $g\inv$ are defined over $\Z$. Let $\lambda_1$ and $\lambda_2$ be the spectral radii of $g_2$ and $g_4$ respectively, and suppose that $\lambda_1$ and $\lambda_2$ are multiplicatively independent.\\
Then the conjugates of $\lambda_1(g)$ have all modulus belonging to the following set:
$$\left\{ \lambda_1,\lambda_2\inv,\lambda_1\inv\lambda_2, \sqrt{\lambda_1\inv}, \sqrt{\lambda_2}, \sqrt{\lambda_1\lambda_2\inv} \right\}.$$
More accurately, there exists a permutation $\alpha_1,\alpha_2,\alpha_3$ of the eigenvalues $\lambda_1,\lambda_2\inv, \lambda_1\inv\lambda_2$ of $g_2$, such that one of the following is true:
\begin{enumerate}
\item $\alpha_1,\alpha_2$ and $\alpha_3$ are all cubic algebraic integers without real conjugates;
\item $\alpha_1$ is a cubic algebraic integer without real conjugates; $\alpha_2$ and $\alpha_3$ are conjugate to one another, and their other conjugates are pairs of conjugate complex numbers with modulus $\alpha_1^{-1/2},\alpha_3^{-1/2},\alpha_3^{-1/2}$ ($k,k+1$ and $k+1$ pairs respectively, $k\geq 0$);
\item $\alpha_1,\alpha_2$ and $\alpha_3$ are conjugate, and their other conjugates are pairs of conjugate complex numbers with modulus $\alpha_1^{-1/2},\alpha_3^{-1/2},\alpha_3^{-1/2}$ ($k$ pairs for each module, $k\geq 0$).
\end{enumerate}
\end{prop}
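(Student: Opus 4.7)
The plan is to apply Proposition \ref{r=2 conjugates} to the irreducible factor $P_1$ of the minimal polynomial of $g_2$ having $\lambda_1$ as a root, and then to sharpen the resulting constraint. Since $w_1=w_{\lambda_1}\in W_1$, the index $n_1$ appearing in Proposition \ref{r=2 conjugates} must be $0$, so that without multiplicities
$$W_1 \subseteq \left\{ w_1,\, -\tfrac{w_1}{2},\, \tfrac{w_2}{(-2)^{n_2}},\, \tfrac{w_2}{(-2)^{n_2+1}},\, \tfrac{w_3}{(-2)^{n_3}},\, \tfrac{w_3}{(-2)^{n_3+1}} \right\}$$
for some $n_2,n_3\geq 0$. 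The key step is then to show that no weight of the form $w_i/(-2)^n$ with $n\geq 2$ can occur in $W_1$; once this is established, one immediately obtains the announced description of the moduli $\{\lambda_1,\lambda_2\inv,\lambda_1\inv\lambda_2,\sqrt{\lambda_1\inv},\sqrt{\lambda_2},\sqrt{\lambda_1\inv\lambda_2}\}$.

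To carry out this reduction I plan to use Lemma \ref{lemma:image theta V_1} to describe $\theta(V_1\times V_1)$, which equals either $V_1^\vee$ (if $v_2,v_3$ are both in $V_1$ or both outside it) or $V_1^\vee\oplus V_j^\vee$ for some $j\neq 1$ (otherwise). Assume for contradiction that $w=w_i/(-2)^n$ lies in $W_1$ with $n\geq 2$, and pick an eigenvector $v\in V_1$ with eigenvalue $\lambda$ of weight $w$. Applying Lemma \ref{key lemma} to the pair $(v,v_1)$, one rules out $-2w_1$ from $W$ by Remark \ref{maximal simple}, and checks by a direct case analysis -- exploiting that the sole $\Q$-linear dependency among $w_1,w_2,w_3$ is $w_1+w_2+w_3=0$ -- that $-w_1-w$ also fails to belong to $W$ when $n\geq 2$. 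Hence $|\lambda|^{-2}\in\Lambda$; choosing the eigenvectors in $H^{2,0}\cup H^{1,1}\cup H^{0,2}$ so that Corollary \ref{non-null wedge} produces a non-null $v\wedge\bar v$, this eigenvalue arises inside $\theta(V_1\times V_1)$, so that $-2w=w_i/(-2)^{n-1}$ must belong to $W_1$ (or to $W_j$). Iterating the argument eventually yields a weight outside the range permitted by Proposition \ref{r=2 conjugates}, contradicting the initial parametrization. This forces the inclusion $W_1\subseteq S_0:=\{w_1,-w_1/2,w_2,-w_2/2,w_3,-w_3/2\}$.

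With the inclusion $W_1\subseteq S_0$ in hand, the three sub-cases of the statement follow by analyzing which of $w_1,w_2,w_3$ actually occur in $W_1$ and applying the sum-zero identity $\sum_{w\in W_1} w=0$ of Remark \ref{sum W_i=0}, which, combined with $w_1+w_2+w_3=0$, fixes the relative multiplicities. If $w_1$ is the only $w_i$ appearing in $W_1$, then $W_1=\{w_1,-w_1/2\}$ with multiplicities $1$ and $2$, so $\alpha_1$ is cubic with two complex conjugate roots of modulus $\sqrt{\lambda_1\inv}$; applying the same reasoning to $P_2,P_3$ and using Lemma \ref{lemma:image theta V_1} (this time for the factor containing $v_2$) to detect whether $\alpha_2,\alpha_3$ are mutually conjugate yields either case (1) or, after a suitable permutation, case (2). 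If on the contrary all three weights $w_1,w_2,w_3$ appear in $W_1$, then $\alpha_1,\alpha_2,\alpha_3$ are mutually conjugate and the sum-zero identity forces each $-w_i/2$ to occur with a common multiplicity $2k$, $k\geq 0$, giving case (3).

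The main obstacle of the argument is the second step, namely excluding the weights $w_i/(-2)^n$ with $n\geq 2$ from $W_1$: this requires a careful interplay between Lemma \ref{lemma:image theta V_1}, the Hodge-theoretic input of Corollary \ref{non-null wedge}, and the arithmetic constraints of Proposition \ref{r=2 conjugates}. Once this reduction is in place, the classification into the three announced cases is a comparatively routine exercise in linear algebra and Galois theory.
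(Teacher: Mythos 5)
Your proposal uses the same toolkit as the paper (Proposition \ref{r=2 conjugates}, Lemma \ref{lemma:image theta V_1}, Lemma \ref{key lemma}, Remark \ref{sum W_i=0}) but organizes the argument differently: the paper first case-splits according to which of $\alpha_1,\alpha_2,\alpha_3$ are mutually conjugate and then tightens the weight constraint within each case, whereas you want to establish $W_1\subseteq\{w_1,-w_1/2,w_2,-w_2/2,w_3,-w_3/2\}$ uniformly up front. The sharpening step is the crux of both proofs, but yours has a gap.

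Specifically, your iteration reads: if $w=w_i/(-2)^n\in W_1$ with $n\geq 2$, then $v\wedge\bar v\neq 0$ (by ruling out the other two options of Lemma \ref{key lemma}), hence $-2w\in W_1\cup W_j$, where $V_1^\vee\oplus V_j^\vee=\theta(V_1\times V_1)$; and you claim that ``iterating'' eventually contradicts the parametrization of Proposition \ref{r=2 conjugates}. But Proposition \ref{r=2 conjugates} only constrains \emph{each} $W_k$ to contain at most two consecutive levels $w_i/(-2)^{n_i},w_i/(-2)^{n_i+1}$; if $-2w$ lands in $W_j$ rather than $W_1$, the next iteration must be carried out in $W_j$, where the admissible indices $n_i(j)$ are different, and there is no obstruction to the weights bouncing between $W_1$ and $W_j$ indefinitely. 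So the recursion does not visibly terminate in a contradiction. The paper closes precisely this hole in its Case 2 with a Galois-theoretic argument: if $v\wedge\bar v$ gives an eigenvalue $\mu\in\Lambda_1\cup\Lambda_j$ satisfying $\mu^{(-2)^k}=\alpha_1$ with $k\geq 1$, then $\mu$ cannot be Galois-conjugate to $\alpha_1$, so $\mu\in\Lambda_j$; and then a further argument (maximality of $\lambda_1$, minimality of $\lambda_2^{-1}$ under $\Gal(\overline\Q/\Q)$-orbits) produces the contradiction. Without that ingredient, your inclusion $W_1\subseteq S_0$ is not established. A secondary issue: your ``direct case analysis'' to rule out $-w_1-w\in W$ for all $w=w_i/(-2)^n$, $n\geq 2$, is feasible (it comes down to the fact that the only $\Q$-linear relation among $w_1,w_2,w_3$ is $w_1+w_2+w_3=0$), but since this exclusion also has to be re-established at each iterate with a new $w$, it is part of the same under-specified recursion. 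Finally, the classification in your last paragraph applies the ``same reasoning'' to $P_2,P_3$, but the sharpening step was only argued for the factor of $\lambda_1$; one should either run the analogous iteration for the factors of $\lambda_2^{-1}$ and $\lambda_1^{-1}\lambda_2$ (as the paper's Remark \ref{permute indices} signals) or, as the paper does in Cases 1 and 3, leverage the conjugacy structure directly.
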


\begin{proof}
Thanks to Lemma \ref{lemma:image theta V_1} and Remark \ref{permute indices}, up to permutation of indices only three situations are possible:
\begin{enumerate}
\item $\alpha_1,\alpha_2$ and $\alpha_3$ are not mutually conjugate. In this case, denoting by $P_i$ the factor of $P$ having $\alpha_i$ as a factor, Lemma \ref{lemma:image theta V_1} implies that
$$\theta(V_i\times V_i)=V_i^\vee.$$
Then $W_i=\{ w_i,-w_i/2\}$; indeed, suppose by contradiction that $w=w_j/(-2)^n\in W_i$ for some $j\neq i$, and let $v\in V_i$ be an eigenvector whose eigenvalue has weight $w$. Then by Lemma \ref{key lemma}, either $n=0$ or $v\wedge \bar v\neq 0$, so that $-2w\in W_i$. By a recursive argument, this proves that $w_j\in W_i$, which contradicts the assumption that $\alpha_j$ and $\alpha_i$ are not conjugate. Therefore, by Proposition \ref{r=2 conjugates},
$$W_i\subseteq \left\{ w_i,-\frac {w_i}2\right\}.$$
Since $\sum_{w\in W_i} w=0$, the multiplicity of $-w_i/2$ must be $2$, which implies that the conjugates of $\alpha_i$ are two conjugate complex numbers. This concludes the proof of case $(1)$.
\item $\alpha_2$ and $\alpha_3$ are conjugate, while $\alpha_1$ is not. The above proof shows that $\alpha_1$ is cubic without real conjugates. Let $P_1$ (respectively $P_2$) be the factor of $P$ having $\alpha_1$ (respectively $\alpha_2$ and $\alpha_3$) as a root; by Lemma \ref{lemma:image theta V_1} and Remark \ref{permute indices} we have
$$\theta(V_2\times V_2)=V_1^\vee \oplus V_2^\vee.$$
Suppose by contradiction that $w=w_1/(-2)^n\in W_2$ for some $n\geq 2$, and let $v\in V_2$ be an eigenvector whose eigenvalue $\lambda$ has weight $w$. Then by Lemma \ref{key lemma} $v\wedge v \neq 0$, so that $|\lambda|^{-2}\in \Lambda_1 \cup \Lambda_2$; since $\alpha_1$ is a non-trivial power of $|\lambda|^{-2}$, these two numbers cannot be conjugate, therefore $|\lambda|^{-2}\in \Lambda_2$. Inductively, this shows that $\beta=\alpha_1^{-1/2}\in \Lambda_1$ is conjugate to $\alpha_2$ and $\alpha_3$. We can write
$$\beta^2=\alpha_2\alpha_3.$$
Since 
$$\{\alpha_2, \alpha_3\}\cap \{\lambda_1,\lambda_2\inv\} \neq \emptyset,$$
we can find $\rho \in \Gal(\overline \Q/\Q)$ such that $\rho(\beta)=\lambda_1$ or $\rho(\beta)=\lambda_2\inv$; both cases lead to a contradiction by maximality of $\lambda_1$ (respectively, by minimality of $\lambda_2\inv$) among the moduli of eigenvalues of $g_2$. Therefore $w_1/(-2)^n\notin W_2$ for $n\geq 2$ and $n=0$.\\
By Proposition \ref{r=2 conjugates} this implies that, up to multiplicities,
$$W_2\subset \left\{ w_2,w_3,-\frac{w_1}2, -\frac{w_2}2, -\frac{w_3}2 \right\}.$$
The equation
$$\sum_{w\in W_2}w=0$$
implies that the multiplicities of $-w_1/2,-w_2/2,-w_3/2$ are $h,h+2,h+2$ respectively for some $h\geq 0$. Since $\alpha_2$ cannot be conjugate to $\alpha_2^{-1/2}$, $h=2k$ is even, which concludes the proof of case $(2)$.
\item $\alpha_1,\alpha_2$ and $\alpha_3$ are conjugate. Then, by Proposition \ref{r=2 conjugates}, up to multiplicities,
$$W_2\subset \left\{ w_1,w_2,w_3,-\frac{w_1}2, -\frac{w_2}2, -\frac{w_3}2 \right\}.$$
The equation
$$\sum_{w\in W_2}w=0$$
implies that the multiplicities of $-w_1/2,-w_2/2,-w_3/2$ are alle equal to $h$  for some $h\geq 0$. Since $\alpha_1$ cannot be conjugate to $\alpha_1^{-1/2}$, $h=2k$ is even, which concludes the proof of case $(3)$.
\end{enumerate}
\end{proof}

\subsection{The case $r(g)=1$}

Let us now suppose that the rank of $g$ is equal to $1$ (see Section \ref{split rank}). Recall that in this case the weights are equipped with a natural order: $w_\lambda > w_{\lambda'}$ if and only if $|\lambda|>|\lambda'|$; for $w\in W$ we set $|w|:=\max\{w,-w\}$.

Denote as usual by $w_1,w_2, w_3\in W$ the weights of the eigenvalues $\lambda_1,\lambda_2\inv, \lambda_1\inv\lambda_2\in \Lambda$ respectively.

\begin{lemma}
\label{r=1 real conjugates}
Suppose that $r=1$ and let $\lambda\neq \lambda_1$ be a real conjugate of $\lambda_1$; then $\lambda=\lambda_1\inv$. In this case $\lambda_1=\lambda_2$.
\end{lemma}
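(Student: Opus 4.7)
The plan is to split the proof into three steps: first show $|\lambda| < \lambda_1$ (i.e., rule out $\lambda = -\lambda_1$), then establish $|\lambda| = \lambda_1^{-1}$ via a Galois iteration on a chosen $w_1$-th root of $\lambda_1$, and finally pin down the sign of $\lambda$ and the equality $\lambda_1 = \lambda_2$ using Proposition \ref{r=1 global} together with Lemma \ref{key lemma}.

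\emph{Step 1.} Suppose $\lambda = -\lambda_1$. Then the minimal polynomial of $\lambda_1$ over $\Q$ would be $P_1(T) = T^2 - \lambda_1^2$ by irreducibility, so $\lambda_1^2$ is a positive integer $\geq 2$. Applying Lemma \ref{key lemma} to the distinct pair $\{\lambda_1, -\lambda_1\} \subset \Lambda$ yields $\pm \lambda_1^{-2} \in \Lambda$, hence $-2w_1 \in W$. Running through the chain description in Proposition \ref{r=1 global}(3) with the log-concavity bound $\lambda_2 \leq \lambda_1^2$ (and $w_2 < 0$), the only admissible option is $w_2 = -2w_1$, i.e.\ $\lambda_2 = \lambda_1^2 \in \Z$. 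But then $\lambda_2^{-1} = 1/\lambda_1^2$ is a non-integer rational, contradicting the fact that $\lambda_2^{-1}$ is an algebraic integer as an eigenvalue of the integer matrix $g_2$.

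\emph{Step 2.} Since $r = 1$, weights are integers; set $\tau := \lambda_1^{1/w_1} > 1$, a real positive algebraic number, so that every eigenvalue modulus equals $\tau^w$ for some $w \in W$. Pick $\rho \in \Gal(\overline{\Q}/\Q)$ with $\rho(\lambda_1) = \lambda$ and write $\lambda = \epsilon \tau^{w_\lambda}$ with $\epsilon \in \{\pm 1\}$. Since $\rho$ fixes $\Q$, induction on $k$ gives
\[
\rho^k(\lambda_1) = \epsilon\, \rho^{k-1}(\tau)^{w_\lambda} \qquad (k \geq 1);
\]
combined with $\rho^k(\tau)^{w_1} = \rho^k(\lambda_1)$, taking absolute values yields the recurrence $|\rho^k(\tau)|^{w_1} = |\rho^{k-1}(\tau)|^{w_\lambda}$, hence
\[
|\rho^k(\tau)| = \tau^{(w_\lambda/w_1)^k} \qquad \text{for all } k \geq 0.
\]
The orbit of $\tau$ under $\rho$ is finite, so $\rho^N(\tau) = \tau$ for some $N \geq 1$; comparing moduli forces $(w_\lambda/w_1)^N = 1$, and since $w_\lambda/w_1 \in \Q$ it must equal $\pm 1$. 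Step 1 excludes $+1$, so $w_\lambda = -w_1$ and $|\lambda| = \lambda_1^{-1}$.

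\emph{Step 3.} A direct check of the possibilities $-w_1 = w_i/(-2)^n$ from Proposition \ref{r=1 global}(3), using $w_1 > 0$, $w_2 < 0$ and $\sqrt{\lambda_1} \leq \lambda_2 \leq \lambda_1^2$, shows the only solution is $n = 0$, $i = 2$, i.e.\ $w_2 = -w_1$ and thus $\lambda_1 = \lambda_2$. So $\lambda_2^{-1} = \lambda_1^{-1} \in \Lambda$ and $\lambda = \pm \lambda_1^{-1}$. If $\lambda = -\lambda_1^{-1}$, applying Lemma \ref{key lemma} to the distinct pair $\{-\lambda_1^{-1}, \lambda_1^{-1}\} \subset \Lambda$ would force $\pm \lambda_1^2 \in \Lambda$, but $\lambda_1^2 > \lambda_1$ exceeds the spectral radius of $g_2$, a contradiction. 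Hence $\lambda = \lambda_1^{-1}$, as claimed.

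The main technical point is the modulus recurrence in Step 2: although $\rho(\tau)$ is only specified up to a $w_1$-th root of unity (the ambiguity in choosing a root of $T^{w_1} - \lambda$), this ambiguity disappears upon taking absolute values, leaving the clean multiplicative recursion that drives the Galois finiteness argument.
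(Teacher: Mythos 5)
Your proof follows the paper's Galois-theoretic strategy (constrain the weight of the real conjugate via Lemma \ref{Galois} and the maximality of a weight), but replaces the paper's single application of a Galois automorphism $\rho$ to the conjugate of maximal weight modulus by an iteration of $\rho$ that produces the geometric sequence of exponents $(w_\lambda/w_1)^k$. That iteration is correct and gives a clean argument for $|\lambda|=\lambda_1^{-1}$ once $w_\lambda = w_1$ is excluded. Your Step 3 is also a nice contribution: the published proof concludes ``$|m|=|n|$, so that $\lambda=\lambda_1^{-1}$'' without explicitly ruling out $\lambda=-\lambda_1^{-1}$, whereas you do so correctly by noting that $\lambda_1^{-1}=\lambda_2^{-1}\in\Lambda$, applying Lemma \ref{key lemma} to the pair $\{\lambda_1^{-1},-\lambda_1^{-1}\}$ and comparing with the spectral radius.

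However, Step 1 has a genuine gap. The assertion that if $-\lambda_1$ is conjugate to $\lambda_1$ then ``the minimal polynomial of $\lambda_1$ over $\Q$ would be $P_1(T)=T^2-\lambda_1^2$ by irreducibility'' is unfounded: $P_1$ must have both $\lambda_1$ and $-\lambda_1$ as roots, so it is divisible by $T^2-\lambda_1^2$ over $\Q(\lambda_1)$, but that quadratic need not lie in $\Q[T]$ and $P_1$ may well have degree $>2$ with further (complex) roots. What one can deduce from $\pm\lambda_1$ being conjugate is only that $P_1(-T)=P_1(T)$, i.e.\ $P_1$ is even; nothing forces $\lambda_1^2\in\Q$. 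Consequently the concluding contradiction of Step 1 --- that $\lambda_2^{-1}=1/\lambda_1^2$ would be a non-integer rational --- evaporates when $\lambda_1^2\notin\Q$. Since Step 2 relies on Step 1 to rule out $w_\lambda=+w_1$ (the iteration gives no information in that case, as $(w_\lambda/w_1)^N=1$ is automatic), the gap propagates to the final conclusion. A repair is possible along the lines you already use in Step 3: from $\pm\lambda_1\in\Lambda$ one gets $-2w_1\in W$, hence $w_2=-2w_1$ by Proposition \ref{r=1 global} and log-concavity; $w_2$ is then the weight of maximal modulus, hence simple by Remark \ref{maximal simple}, so exactly one of $\pm\lambda_1^{-2}$ lies in $\Lambda$, and it is $\lambda_2^{-1}=\lambda_1^{-2}$; choosing eigenvectors $v,v'\in H^{1,1}(X,\R)$ for $\lambda_1,-\lambda_1$, Corollary \ref{non-null wedge} and this simplicity rule out $v\wedge v'\neq 0$, and one can continue from there --- but this is not what you wrote, and as written Step 1 does not hold up.
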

\begin{proof}
Since $r=1$, there exist integers $m,n$, not both equal to $0$, such that
$$\lambda^m=\lambda_1^n.$$
Suppose that $|m|\geq |n|$ (the case $|n|\geq |m|$ is proven in the same way) and let $\rho \in \Gal (\overline{\Q}/\Q)$ be such that $\rho(\lambda_1)=\mu$, where $\mu$ is a conjugate of $\lambda_1$ whose weight has maximal modulus. Denoting by $w$ and $w'$ the weights of $\mu$ and $\rho(\lambda_1)$ respectively, the above equation implies that
$$mw=nw';$$
by maximality of $|w|$ we get $|m|=|n|$, so that $\lambda=\lambda_1\inv$ as claimed.

In order to prove that in this case $\lambda_1=\lambda_2$, it suffices to apply Proposition \ref{r=1 global}: if this were not the case, since $\lambda_1\inv\in \Lambda$, then either $\lambda_1\inv=\lambda_1\inv\lambda_2$, a contradiction, or $\lambda_1^2\in \Lambda$, contradicting the maximality of $\lambda_1$.
\end{proof}

The following proposition implies Theorem \ref{thm:conjugates lambda_1} in the case where $\lambda_1$ and $\lambda_2$ are not multiplicatively independent.

\begin{prop}
\label{r=1 conjugates lambda_1}
Let $g\in \GL(H^2(X,\R))$ be a semisimple linear automorphism preserving the cohomology graduation, Hodge decomposition, wedge product and Poincar\'e duality, and such that $g$ and $g\inv$ are defined over $\Z$. Let $\lambda_1$ and $\lambda_2$ be the spectral radii of $g_2$ and $g_4$ respectively, and suppose that the rank of $g$ is equal to $1$ (i.e. $\lambda_1>1$ and $\lambda_1$ and $\lambda_2$ are not multiplicatively independent). Then
\begin{itemize}
\item either $\lambda_1$ and $\lambda_2$ are both cubic without real conjugates;
\item or $\lambda_2=\lambda_1=\lambda$, $\lambda$ and $\lambda\inv$ are conjugate, and all of their other conjugates are pairs of conjugate complex numbers of modulus $\sqrt{\lambda}, 1$ or $\sqrt{\lambda\inv}$ ($k,k'$ and $k$ pairs respectively, $k,k'\geq 0$).
\end{itemize}
\end{prop}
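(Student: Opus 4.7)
The argument will split into two cases depending on whether $\lambda_1=\lambda_2$, in direct analogy with the rank-two analysis carried out in Proposition \ref{r=2 conjugates lambda_1}.

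First, assume $\lambda_1\neq \lambda_2$. By Lemma \ref{r=1 real conjugates} applied both to $g$ and to $g\inv$ (for which $\lambda_1(g\inv)=\lambda_2$), neither $\lambda_1$ nor $\lambda_2$ admits a real conjugate over $\Q$ other than itself; in particular neither of the real numbers $\lambda_2\inv$ or $\lambda_1\inv\lambda_2$ is conjugate to $\lambda_1$, except in the degenerate identification $\alpha_3=\alpha_1$ that arises when $\lambda_2=\lambda_1^2$. I would then mirror Lemma \ref{lemma:image theta V_1} in this setting to prove $\theta(V_1\times V_1)=V_1^\vee$, a conclusion relying only on the minimality of $V_1^\vee$ as a rational $g$-submodule together with the disjointness of the relevant Galois orbits. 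Once this is in hand, for any eigenvector $v\in V_1$ with eigenvalue $\beta$ of weight $w_\beta\notin \{w_1,-w_1/2\}$, the class $v\wedge \bar v$ is a nonzero eigenvector of $g_4|_{V_1^\vee}$ (nonvanishing being ensured by Corollary \ref{non-null wedge}, if necessary by combining $v$ with an independent partner in the $H^{1,1}$ case), so $|\beta|^{-2}\in \Lambda_1$; iterating produces an infinite chain $|\beta|^{(-2)^k}\in \Lambda_1$ inside the finite set $\Lambda_1$, a contradiction. Hence $W_1\subseteq \{w_1,-w_1/2\}$, and combining this with $\sum_{w\in W_1} w=0$ from Remark \ref{sum W_i=0} forces the multiplicity pattern $(1,2)$ and thus $\deg P_1=3$ with $\lambda_1$ as the unique real root and a conjugate pair of complex roots of modulus $\lambda_1^{-1/2}$. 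The identical argument applied to $g\inv$ delivers the same conclusion for $\lambda_2$.

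Now assume $\lambda_1=\lambda_2=\lambda$. Lemma \ref{r=1 real conjugates} gives directly that $\lambda$ and $\lambda\inv$ are conjugate over $\Q$, so $P_1=P_2$ and $V_1=V_2$. A Galois automorphism sending $\lambda$ to $\lambda\inv$ induces the reciprocal involution $\mu\mapsto \mu\inv$ on $\Lambda_1$, which negates weights; hence $W_1$ is symmetric under $w\mapsto -w$ with matching multiplicities on opposite weights. Next, since $\lambda^{\pm 2}\notin \Lambda$ by the extremality of $\lambda_1,\lambda_2\inv$, the classes $v_1\wedge v_1$ and $v_2\wedge v_2$ both vanish, so Corollary \ref{non-null wedge} forces $v_1\wedge v_2\neq 0$; the eigenvalue $\lambda\cdot \lambda\inv=1$ places this class in $V_3^\vee$, where $V_3$ is the factor corresponding to the eigenvalue $1$, and one deduces $\theta(V_1\times V_1)\subseteq V_1^\vee\oplus V_3^\vee$. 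Consequently, for any eigenvector $v\in V_1$ with $|\beta|\notin \{1,\lambda,\lambda\inv\}$, a nonzero $v\wedge \bar v$ has eigenvalue $|\beta|^2\neq 1$, excluding $V_3^\vee$ and forcing $|\beta|^{-2}\in \Lambda_1$; iterating the resulting $(-2)$-descent on weights and invoking the bounds $\lambda\inv\leq |\mu|\leq \lambda$ forces $w_\beta\in \{0,\pm w_1/2\}$. Combined with the reciprocal symmetry, this yields $W_1\subseteq \{\pm w_1,\pm w_1/2, 0\}$ with the multiplicity pattern $(1,1,k,k,k')$ demanded by the statement.

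The principal obstacle is the recursive $(-2)$-descent in the second case: one must verify, for every eigenvector $v\in V_1$ whose weight lies outside the allowed set, that some wedge involving $v$ is nonzero, which requires a careful case distinction according to whether $v$ lies in $H^{1,1}$ or in $H^{2,0}\oplus H^{0,2}$, and a judicious use of Corollary \ref{non-null wedge} to pair $v$ with a suitable second eigenvector. One must also check that the chain $|\beta|^{(-2)^k}$ escapes the interval $[\lambda\inv,\lambda]$ in finitely many steps, which is where the extremality of $\lambda_1,\lambda_2\inv$ is crucial. The adaptation of Lemma \ref{lemma:image theta V_1} in the first case is by comparison a routine bookkeeping exercise once one unwinds which eigenvalues do and do not belong to $\Lambda_1$.
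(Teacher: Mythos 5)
Your overall plan (show $\theta(V_1\times V_1)=V_1^\vee$, deduce $W_1\subseteq\{w_1,-w_1/2\}$, then conclude from $\sum_{w\in W_1}w=0$) is the same engine the paper uses, but your two-case split on $\lambda_1=\lambda_2$ versus $\lambda_1\neq\lambda_2$ is too coarse and both branches have genuine gaps.

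In your ``Case A'' ($\lambda_1\neq\lambda_2$) you assert $\theta(V_1\times V_1)=V_1^\vee$ in full generality, but this fails when $\lambda_2=\lambda_1^2$ (or symmetrically $\lambda_2=\sqrt{\lambda_1}$), which are not excluded by $\lambda_1\neq\lambda_2$. When $\lambda_2=\lambda_1^2$ one has $w_2=-2w_1$ and $w_3=w_1$, so $\lambda_1^{-1}\lambda_2=\lambda_1$ and the eigenvalue $w=w_3$ is not a ``different real conjugate of $\lambda_1$'' that the argument can exclude; moreover, if $v_1\wedge v_1\neq 0$ it is an eigenvector for $g_4$ with eigenvalue $\lambda_1^2=\lambda_2$, which is \emph{not} in the spectrum $\Lambda_1^{-1}$ of $g_4|_{V_1^\vee}$ (for that one would need $\lambda_2^{-1}$ conjugate to $\lambda_1$, which Lemma \ref{r=1 real conjugates} forbids when $\lambda_1\neq\lambda_2$). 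Hence $\theta(V_1\times V_1)\not\subseteq V_1^\vee$ is a genuine possibility there, and the $(-2)$-descent inside $\Lambda_1$ is not launched. This is precisely why the paper carves out a separate case for $\lambda_2\in\{\sqrt{\lambda_1},\lambda_1^2\}$ and argues purely on weights and Galois conjugation (ruling out weight $0$ via $\lambda\bar\lambda=1$, ruling out $w_1/(-2)^n$ with $n\geq 2$ via the ``maximal weight after applying $\rho$'' congruence) instead of via $\theta(V_1\times V_1)$.

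In your ``Case B'' ($\lambda_1=\lambda_2$) you write that Lemma \ref{r=1 real conjugates} ``gives directly that $\lambda$ and $\lambda^{-1}$ are conjugate over $\Q$.'' That is a misreading of the implication: the lemma says that \emph{if} $\lambda_1$ has a real conjugate $\lambda\neq\lambda_1$, then $\lambda=\lambda_1^{-1}$ (and then $\lambda_1=\lambda_2$). It does not assert the converse. When $\lambda_1=\lambda_2$, it remains entirely possible that $\lambda_1$ is cubic with no real conjugates, in which case the first alternative of the proposition holds. The paper's Case~3 therefore \emph{assumes} $\lambda_1$ is not cubic without real conjugates and must still work to establish that $\lambda_1$ and $\lambda_1^{-1}$ are conjugate, through a sub-case analysis of which weight $w\notin\{w_1,-w_1/2\}$ occurs among the conjugates. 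A smaller but recurring imprecision: you invoke Corollary \ref{non-null wedge} to ensure $v\wedge\bar v\neq 0$, but that corollary only says the three products $u\wedge\bar u$, $u\wedge\bar v$, $v\wedge\bar v$ cannot all vanish; pinning down which one is nonzero requires ruling out the other two (as the paper does by weight comparisons), and that step is exactly where the structure of $W$ and the location of $w$ relative to $\{w_1,w_2,w_3\}$ matters.
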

\begin{proof}
Denote by $P_1$ the factor of $P$ having $\lambda_1$ as a root.

\emph{Case 1: $\lambda_2\notin \{\sqrt{\lambda_1},\lambda_1,\lambda_1^2\}$.} We show first that
$$\theta(V_1 \times V_1)=V_1^\vee.$$
Since $\lambda_1$ is a simple eigenvalue of $g_2$ by Proposition \ref{r=1 global}, $V_1$ is minimal among the $g$-invariant subspaces defined over $\Q$ (see Remark \ref{minimality V_i}), and so is $V_1^\vee$; therefore, as in the proof of Lemma \ref{lemma:image theta V_1}, in order to prove that $\theta(V_1 \times V_1)\subseteq V_1^\vee$ we only need to show that
$$v_1\wedge v \in V_1^\vee$$
for all eigenvectors $v\in V_1$. Let $v\in V_1$ be an eigenvector with eigenvalue $\lambda$ and let $w=w_\lambda$, and let as usual $w_3:=-w_1-w_2$.
\begin{itemize}
\item If $w\notin \{-w_1/2,w_2,w_3\}$, then $v_1\wedge v=0$. Indeed, if this were not the case, then $-w_1-w\in W$; by the assumption and Proposition \ref{r=1 global},the smallest weights of $W$ (with respect to the natural order) are
$$w_2<- \frac{w_1}2.$$
Therefore 
$$w > -w_1/2 \qquad \Rightarrow \qquad 
-w_1-w < -\frac {w_1}2,$$
which implies that $-w_1-w=w_2$, i.e. $w=w_3$, contradicting the assumption.
\item If $w=-w_1/2$, i.e. $\lambda \bar \lambda=\lambda_1^{-2}$, then $v_1\wedge \bar v$ is an eigenvector with eigenvalue $\bar \lambda\inv$; since $\lambda$ and $\bar \lambda$ are conjugate, this implies that $v_1\wedge v\in V_1^\vee$.
\item If $w=w_2$ or $w=w_3$, then $\lambda=\lambda_2\inv$ or $\lambda=\lambda_1\inv\lambda_2$ is a real conjugate of $\lambda_1$; but then by Lemma \ref{r=1 real conjugates} we have $\lambda=\lambda_1\inv$, which contradicts the assumptions on $\lambda_2$. Thus this case cannot occur.
\end{itemize}
We have showed that $\theta(V_1 \times V_1)\subseteq V_1^\vee$.\\
The vector space $\theta(V_1\times V_1)$ is non-empty by Lemma \ref{key lemma}, $g$-invariant and defined over $\Q$. Therefore, equality follows from minimality.

Now let us show that $\lambda_1$ is cubic without real conjugates. Since
$$\sum_{w\in W_1} w=0$$
and since $w_1$ has multiplicity $1$ in $W$, we only need to show that the conjugates of $\lambda_1$ have weight $-w_1/2$. Let $\lambda$ be a conjugate of $\lambda_1$ with weight $w$ and let $v$ be an eigenvector for $\lambda$.
\begin{itemize}
\item If we had $w=w_2$, then by simplicity of such weight we have $\lambda=\lambda_2\inv$, contradicting Lemma \ref{r=1 real conjugates} since $\lambda_2\neq \lambda_1$.
\item If we had $w=0$, a conjugate $\lambda$ of $\lambda_1$ would satisfy
$$\lambda \bar \lambda=1.$$
Applying $\rho\in \Gal (\overline \Q/\Q)$ such that $\rho(\lambda)=\lambda_1$, we would have that $\lambda_1\inv$ is a conjugate of $\lambda_1$, so that $\lambda_1=\lambda_2$, a contradiction.
\item If $w=w_3$, since 
$$\lambda_1\inv \lambda_2\neq \lambda_1,\lambda_1\inv,$$
Lemma \ref{r=1 real conjugates} implies that $\lambda \notin \R$. Lemma \ref{key lemma} applied to $v$ and $v_1$ implies that $v\wedge \bar v \neq 0$: indeed otherwise we would have $v_1\wedge \bar v_1 \neq 0$ (contradicting the minimality of $\lambda_2\inv> \lambda_1^{-2}$) or $v_1\wedge \bar v\neq 0$ (contradicting the simplicity of the weight $w_2$).\\
Therefore, since $\theta(V_1, V_1)=V_1^\vee$, we have 
$$|\lambda|^{-2}=\lambda_1^2\lambda_2^{-2} \in \Lambda_1$$
 and by Lemma \ref{r=1 real conjugates}  either $\lambda_1^2\lambda_2^{-2} = \lambda_1$, i.e. $\lambda_1=\lambda_2^2$, a contradiction or $|\lambda|^{-2}=\lambda_1\inv\in \Lambda$ and $\lambda_1=\lambda_2$, again a contradiction.
\item Finally, if  $w\notin \{0,w_1,-w_1/2,w_2,w_3\}$, then by Proposition \ref{r=1 global} $v\wedge \bar v\neq 0$ would be an eigenvector with (real) eigenvalue $|\lambda|^2$. Since $\theta(V_1 \times V_1)=V_1^\vee$, this implies that $|\lambda|^{-2}\in \Lambda_1$; by Lemma \ref{r=1 real conjugates} we would have $|\lambda|^2=\lambda_1^{\pm 1}$, a contradiction.
\end{itemize}
This shows that $\lambda_1$ is cubic without real conjugates; the proof for $\lambda_2$ is completely analogous.

\vspace{0.3cm}
\emph{Case 2: $\lambda_2\in \{\sqrt{\lambda_1}, \lambda_1^2\}$.} Up to replacing $g$ by $g\inv$, we may assume that $\lambda_2=\sqrt{\lambda_1}$; let us show that $\lambda_1$ and $\lambda_2$ are both cubic without real conjugates.\\
Remark that by Proposition \ref{r=1 global}, up to multiplicities
$$W\setminus\{0\}=\left\{ \frac{w_1}{(-2)^n}, n=0,\ldots ,N \right\}.$$
Let us show first that $\lambda_1$ is cubic without real conjugates. Since
$$\sum_{w\in W_1} w=0$$
and since $w_1$ has multiplicity $1$ in $W$, we only need to show that $W_1\subset \{w_1,-w_1/2\}$. Let $w\in W_1$ be the weight of an eigenvalue $\lambda\in \Lambda_1$.
\begin{itemize}
\item If $w=0$, we show as in Case 1 that $\lambda_1=\lambda_2$, a contradiction.
\item If $w=w_1/(-2)^n$ and $n\geq 2$, then we argue as in the proof of Proposition \ref{r=2 conjugates} to obtain a contradiction: indeed in this case
$$(\lambda \bar \lambda)^k=\lambda_1, \qquad 2|k.$$
Applying $\rho\in \Gal (\overline \Q/\Q)$ such that $\rho(\lambda_1)$ has weight $w_1/(-2)^n$ with $n$ maximal and letting $w'$ and $w''$ be the weights of $\rho(\lambda)$ and $\rho(\bar\lambda)$ respectively, we would have
$$k(w' + w'')=\frac{w_1}{(-2)^n},$$
a contradiction modulo $\Z w_1/(-2)^{n-1}$.
\end{itemize}
Therefore $W_1\subset\{w_1,-w_1/2\}$ and thus $\lambda_1$ is cubic without real conjugates.

Now let us prove that $\lambda_2$ is also cubic without real conjugates; this is equivalent to $\lambda_2\inv$ being cubic without real conjugates. Let $P_2$ be the factor of $P$ having $\lambda_2\inv$ as a root. Since $\lambda_2=\sqrt{\lambda_1}$, $\lambda_2$ has degree $3$ or $6$ over $\Q$; the same proof as above and the simplicity of the weight $w_1$ show that
$$W_2\subset\left\{ w_2,-\frac{w_2}2 \right\}.$$
Since
$$\sum_{w\in W_2}w=0,$$
if $\lambda_2$ had degree $6$ then the multiplicity of the weight $w_2$ in $W_2$ would be equal to $2$, contradicting the fact that $\lambda_2\inv$ is a real eigenvalue with weight $w_2$. Therefore $\lambda_2$ is cubic, and by Lemma \ref{r=1 real conjugates} it doesn't have any real conjugate.

\vspace{0.3cm}
\emph{Case 3: $\lambda_1=\lambda_2$.} Suppose that $\lambda_1$ is not a cubic algebraic integer without real conjugates. Denote by $P_1$ the factor of $P$ having $\lambda_1$ as a root; since
$$\sum_{w\in W_1} w=0,$$
and since the weight $w_1\in W$ has multiplicity $1$, $\lambda_1$ is \emph{not} cubic without real conjugates if and only if some conjugate $\lambda$ of $\lambda_1$ has weight $w\notin \{w_1,-w_1/2\}$. Let us prove first that in this case $\lambda_1$ and $\lambda_1\inv$ are conjugate. We distinguish the following sub-cases:
\begin{itemize}
\item $w=-w_1$. Then, since $\lambda_1\inv$ is the only eigenvalue with weight $-w_1$, $\lambda_1$ and $\lambda_1\inv$ are conjugate.
\item $0\leq |w|<w_1/2$. Since the rank $r$ is equal to $1$,  $\lambda$ and  $\lambda_1$ satisfy an equation
$$(\lambda \bar \lambda)^m=\lambda_1^n,$$
and since $|w|<w_1/2$ we have $|n|<|m|$. By Lemma \ref{Galois} there exists $\rho\in \Gal(\overline \Q /\Q)$ such that $\rho(\lambda)=\lambda_1$; let $\lambda'=\rho(\bar \lambda),\lambda''=\rho(\lambda_1)$, and let $w',w''$ be their weights respectively. Then the above equation implies that
\begin{equation}
\label{equation 1}
mw_1 + mw' = nw'' \quad \Leftrightarrow \quad w_1=-w' + \frac nm w''.
\end{equation}
This implies that either $w'=w_2$ or $w''=w_2$; indeed, if this were not the case, by  Proposition \ref{r=1 global} we would have
$$|w'|,|w''| \leq \max\left\{ \frac{|w_1|}2, \frac{|w_2|}2, |w_1+w_2| \right\}=\frac{|w_1|}2;$$
this would contradict equation \ref{equation 1} because $|n/m|<1$.\\
We have shown that $w_2$ is a conjugate weight of $\lambda_1$; since $\lambda_1\inv$ is the only eigenvalue with weight $w_2$, this means that $\lambda_1$ and $\lambda_1\inv$ are conjugate as claimed.
\item $w=w_1/2$. We may assume that we don't fall in one of the cases above, i.e. that
$$W_1\subset\left\{w_1,\frac{w_1}2,-\frac{w_1}2\right\}.$$
We show that $\theta(V_1 \times V_1)=V_1^\vee$; in order to do that it suffices to show that $v_1\wedge v\in V_1^\vee$ for every eigenvector $v\in V_1$ (see Case 1 above and the proof of Lemma \ref{lemma:image theta V_1}). If the eigenvalue $\mu$ of the eigenvector $v$ has weight $w_1$ or $w_1/2$, then $v_1\wedge v=0$; if the weight is $-w_1/2$, then $v_1\wedge v$ is an eigenvector with eigenvalue $\bar \mu\inv$, hence $v_1\wedge v\in V_1^\vee$. Therefore $\theta(V_1\times V_1)= V_1^\vee$ as claimed.\\
Now let $v\in V_1$ be an eigenvector with eigenvalue of weight $w_1/2$; by Proposition \ref{r=1 global}, $v\wedge \bar v \neq 0$, so that $-w_1\in W_1$. This shows that we fall in one of the above cases, and in particular $\lambda_1$ and $\lambda_1\inv$ are conjugate.
\end{itemize}

We have shown that $\lambda_1$ and $\lambda_1\inv$ are conjugate. By Lemma \ref{r=1 real conjugates} there are no other real conjugates, therefore in order to complete the proof we only need to show that
$$\pm \frac {w_1}{2^n}\notin W_1 \qquad \text{for }n\geq 2.$$
This can be proven exactly as in Case 2.
\end{proof}

\subsection{Examples on tori}

In this section we provide examples of automorphisms of compact complex tori of dimension $3$ which show that (almost) all of the sub-cases of Proposition \ref{r=2 conjugates lambda_1} and \ref{r=1 conjugates lambda_1} can actually occur. For more examples see \cite{MR3329200,oguisopisot}.

\begin{lemma}
\label{construction examples}
Let $P\in \Z[T]$ be a monic polynomial of degree $2n$ all of whose roots are distinct and non-real and such that $P(0)=1$. Then there exists a compact complex torus $X$ of dimension $n$ and an automorphism
$$f\colon X \to X$$
such that the characteristic polynomial of the linear automorphism $f^*_1\colon H^1(X,\C) \to H^1(X,\C)$ is equal to $P$.
\end{lemma}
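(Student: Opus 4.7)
The strategy is to realize $X$ as $\R^{2n}/\Z^{2n}$ equipped with an invariant complex structure, with the automorphism $f$ coming from an integer matrix acting linearly on $\R^{2n}$. Recall that for a complex torus $X = V/\Lambda$ of dimension $n$, the first cohomology $H^1(X,\Z)$ is canonically identified with $\Hom(\Lambda,\Z)$, and the action of an automorphism induced by $\phi\in\Aut(\Lambda)$ on $H^1(X,\C)$ is the dual of $\phi\otimes\C$; in particular it has the same characteristic polynomial as $\phi$.

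First, let $\phi\in \GL_{2n}(\Z)$ be the companion matrix of $P$ (note that $P(0)=1$ forces $\det\phi = 1$, so $\phi\in \SL_{2n}(\Z)$), viewed as acting on the lattice $\Lambda=\Z^{2n}\subset\R^{2n}$. The goal is to construct a complex structure $J$ on $\R^{2n}$ with $J\phi=\phi J$: this will make $\R^{2n}/\Lambda$ into a compact complex torus $X$, and $\phi$ will descend to a holomorphic automorphism $f\colon X\to X$ whose action on $H^1(X,\C)$ has characteristic polynomial $P$.

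Next, complexify: since the roots of $P$ are all distinct, $\phi\otimes\C$ is diagonalizable on $\C^{2n}$, with one-dimensional eigenspaces $E_{\lambda_1},\ldots,E_{\lambda_n},E_{\bar\lambda_1},\ldots,E_{\bar\lambda_n}$ (the pairing into conjugate pairs is well defined because no root is real, so $\lambda_j\neq\bar\lambda_j$). Because $\phi$ is a real matrix, complex conjugation on $\C^{2n}=\R^{2n}\otimes\C$ swaps $E_{\lambda_j}$ and $E_{\bar\lambda_j}$. Set
$$V_+ := E_{\lambda_1}\oplus\cdots\oplus E_{\lambda_n}, \qquad V_- := \overline{V_+} = E_{\bar\lambda_1}\oplus\cdots\oplus E_{\bar\lambda_n}.$$
Then $\C^{2n} = V_+\oplus V_-$ with $V_-=\overline{V_+}$, which is exactly the data of a complex structure $J$ on $\R^{2n}$ (defined by $J=i$ on $V_+$ and $J=-i$ on $V_-$; conjugation-equivariance of the splitting guarantees $J$ preserves $\R^{2n}$). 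By construction $V_+$ is $\phi$-invariant, so $J$ commutes with $\phi$.

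Finally, equip $X=\R^{2n}/\Lambda$ with this complex structure. Since $\phi$ preserves $\Lambda$ and commutes with $J$, it descends to a biholomorphic automorphism $f\colon X\to X$. The induced map on $H^1(X,\Z)=\Hom(\Lambda,\Z)$ is the dual of $\phi$, hence has characteristic polynomial $P$, and the same holds after complexification on $H^1(X,\C)$. The only mildly delicate step is verifying that the complex structure defined via $V_+$ is indeed real, but this is automatic once one checks that $V_-=\overline{V_+}$, which follows from the pairing of eigenspaces by conjugation and the non-reality of the roots of $P$.
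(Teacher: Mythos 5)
Your proposal is correct and follows essentially the same route as the paper: both take the companion matrix of $P$ acting on $\Z^{2n}\subset\R^{2n}$ and build a compatible linear complex structure by choosing one eigenvalue from each complex-conjugate pair of roots. The only cosmetic difference is that the paper first decomposes $\R^{2n}$ into real $\phi$-invariant planes $V_i$ and puts a complex structure on each, while you package the same choice directly as a splitting $\C^{2n}=V_+\oplus\overline{V_+}$; the two descriptions define the same $J$.
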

\begin{proof}
Let
$$P(T)=T^{2n} + a_{2n-1}T^{2n-1} + \ldots + a_1 T +1\in \Z[T]$$
be any polynomial. \\
We will prove first that there exists a linear diffeomorphism $f$ of the real torus $M=\R^{2n}/ \Z^{2n}$ such that the induced linear automorphism $f^*_1\in \GL(H^1(M,\R))$ has characteristic polynomial $P$. Indeed, the companion matrix
$$A=A(P)=
\left (
\begin{array}{cccccccccc}
0 & 0 &  0 &\ldots & 0 & -1\\
1 & 0 & 0  & \ldots & 0 & -a_1\\
0 & 1 & 0 & \ldots & 0 & -a_2\\
\vdots & \ddots &\ddots &\ddots & \vdots & \vdots\\
 \vdots &   &  0& 1&  0 & -a_{2n-2}\\
0 & \cdots & 0 & 0 & 1 & -a_{2n-1} 
\end{array}
\right)
$$
has characteristic polynomial $P$; since $A\in \SL_{2n}(\Z)$, the induced linear automorphism $f$ of $\R^{2n}$ preserves the lattice $\Z^{2n}$ and so does its inverse. Hence, $A$ induces a linear automorphism, which we denote again by $f$:
$$f\colon \faktor{\R^{2n}}{\Z^{2n}} \to \faktor{\R^{2n}}{\Z^{2n}}.$$
Let $dx_i$ be a coordinate on the $i$-th factor of $M=\R^{2n}/\Z^{2n}=(\R/\Z)^{2n}$. In the basis $dx_1,\ldots , dx_{2n}$  of $H^1(X,\R)$, the matrix of $f^*_1$ is exactly the transposed $A^T$; in particular, the characteristic polynomial of $f^*_1$ is equal to $P$.

In order to conclude the proof, we will show that, if the roots of $P$ are all distinct and non-real, then $M$ can be endowed with a complex structure $J$ such that $f$ is holomorphic with respect with the structure $J$. Let
$$\beta_1, \bar \beta_1,\ldots , \beta_n, \bar \beta_n\in \C \setminus \R$$
be the roots of $P$, and let
$$V_i=\ker(f - \beta_i I)(f - \bar \beta_i I) \subset \R^{2n},$$
where we have identified $f$ with the linear automorphism of $\R^{2n}$ induced by the matrix $A$.\\
The $V_i$ are planes such that
$$\R^{2n}= \bigoplus _{i=1}^n V_i.$$
The restriction of $f$ to $V_i$ is diagonalizable with eigenvalues $\beta_i$ and $\bar \beta_i$; therefore there exists a unique complex structure $J_i$ on $V_i$ such that, with respect to a holomorphic coordinate $z_i$ on $V_i\cong \C$, the action of $f$ is the multiplication by $\beta_i$:
$$f|_{V_i}(z_i)=\beta_i z_i.$$
The complex structures $J_i$ induce a complex structure on $\R^{2n}$; by canonically identifying $\R^{2n}$ with the tangent space at any point of $M$, we get an almost-complex structure on $M$. It is not hard to see that $J$ is integrable, and that $f$ is holomorphic with respect to $J$. This concludes the proof.
\end{proof}

Let us apply Lemma \ref{construction examples} to the three-dimensional case: fix a monic polynomial $P\in \Z[T]$ of degree $6$ such that $P(0)=1$, and suppose that its roots
$$\beta_1, \beta_2,  \beta_3,  \beta_4=\bar \beta_1,\beta_5=\bar \beta_2,\beta_6=\bar \beta_3$$
are all distinct and non-real.\\
By Lemma \ref{construction examples}, there exists a $3$-dimensional complex torus $X=\C^3/\Lambda$ and an automorphism $f\colon X\to X$ such that the induced linear automorphism $f^*_1\colon H^1(X,\C)\to H^1(X,\C)$ has characteristic polynomial $P$. Remark that the proof of the Lemma shows something more precise:  the restriction of $f^*_1$ to $H^{1,0}(X)$ (respectively to $H^{0,1}(X)$) is diagonalizable with eigenvalues $\beta_1,\beta_2,\beta_3$ (respectively $\bar \beta_1,\bar \beta_2,\bar\beta_3$).

Since for a complex torus the wedge-product of forms induces isomorphisms
$$H^{2,0}(X)\cong \Exterior^2 H^{1,0}(X), \quad H^{1,1}(X) \cong H^{1,0}(X) \otimes H^{0,1}(X), \quad   H^{0,2}(X) \cong \Exterior^2 H^{0,1}(X),$$
the eigenvalues of $f^*_2\in \GL(H^2(X,\R))$ are exactly the $15$ numbers $\beta_i\beta_j$, $1\leq i < j \leq 6 $. If $|\beta_1|\geq |\beta_2| \geq |\beta_3|$, then
$$\alpha_1:=\lambda_1=|\beta_1|^2, \quad \alpha_2:=\lambda_2\inv=|\beta_3|^2, \quad \alpha_3:=\lambda_1\inv\lambda_2=|\beta_2|^2.$$
Let
$$Q(T)=\prod_{1\leq i< j\leq 6}(T-\beta_i\beta_j).$$
Then $Q$ is the characteristic polynomial of $f^*_2$, and in particular $Q\in \Z[T]$. Let
$$R_P:= \{\beta_i \}_{1\leq i \leq 6} \qquad R_Q=\{\beta_i\beta_j\}_{1\leq i < j \leq 6} $$
$$K_P:=\Q(R_P) \supset K_Q:=\Q(R_Q).$$
We are interested in the irreducible factors of $Q$ over $\Z$; assuming that all the roots of $Q$ are distinct, the irreducible factors of $Q$ are in $1:1$ correspondence with the orbits of the action of $\Gal(K_Q/\Q)$ on the set of roots $R_Q=\{\beta_i\beta_j\}$. Since each element of $\Gal(K_Q/\Q)$ can be extended to an element of $\Gal(K_P/\Q)$, we consider instead the orbits under the action of 
$$G:= \Gal(K_P/\Q);$$
 $G$ acts by permuting the roots of $P$, and thus it can be seen as a subgroup of $\mathfrak S_6$. Under this identification, the action of $G$ on $R_Q$ is given by the natural action of (subgroups of) $\mathfrak S_6$ on the set $R_P$.\\
Therefore, as long as we know how $\Gal(K_P/\Q)$ permutes the roots of $P$, we can deduce the number and the degrees of the irreducible factors of $Q$. This is a classical problem in Galois theory (see \cite{MR1228206}), and programs like Magma allow to easily compute this action.

\begin{ex}
\label{S_15}
Let $P(T)=T^6-T^5+T^3-T^2+1$; then $G=\mathfrak S_6$ acts transitively on $R_P$. This means that $Q$ is irreducible, and thus $\alpha_1,\alpha_2,\alpha_3$ are conjugate; their other conjugates are six pairs of complex conjugates, two of modulus $1/\sqrt{\alpha_1}$, two of modulus $1/\sqrt{\alpha_2}$ and two of modulus $1/\sqrt{\alpha_3}$.\\
This realizes subcase $1$ of Proposition \ref{r=2 conjugates lambda_1} with $k=2$.
\end{ex}

\begin{ex}
\label{grado 9}
Let $P(T)=T^6-3T^5+4T^4-2T^3+T^2-T+1$; then $G=\langle(1\, 3\, 4)(2\, 6\, 5), (1 \, 4 \, 3), (1\, 6)(2\, 3)(4\, 5) \rangle$. The action of $G$ on $R_P$ has two orbits, of cardinality $9$ and $6$ respectively; one can check that the roots of $Q$ are all distinct, so that $\alpha_1$ and $\alpha_2$ are not both cubic, and that $\alpha_1\neq \alpha_2\inv$, so that by Proposition \ref{r=1 conjugates lambda_1} we have $r=2$. By Proposition \ref{r=2 conjugates lambda_1}, the only possibility is that $\alpha_1,\alpha_2$ and $\alpha_3$ are conjugate of degree $9$; their other conjugates are three pairs of complex conjugates, of modulus $1/\sqrt{\alpha_1},1/\sqrt{\alpha_2}$ and $1/\sqrt{\alpha_3}$  respectively.\\
This realizes subcase $1$ of Proposition \ref{r=2 conjugates lambda_1} with $k=1$.
\end{ex}

\begin{ex}
\label{caso 2}
Let $P(T)=T^6+T^5+2T^4-T^3+2T^2-3T+1$; then $G=\langle(1\, 2 \, 3)(4\, 5\, 6\,), (1\, 4\, 5)(2\, 3\, 6), (2\, 4)(3\, 5), (1\, 5\, 6\, 3) \rangle$. The action of $G$ on $R_P$ has two orbits, of cardinality $12$ and $3$ respectively. One can check that $\alpha_1,\alpha_2$ and $\alpha_3$ are not all conjugate, so that we are in case $2$ of Proposition \ref{r=2 conjugates lambda_1}: after permuting the indices, $\alpha_1$ is cubic without real conjugates; $\alpha_2$ and $\alpha_3$ are conjugate and their other conjugates are $5$ pairs of complex conjugates, one of modulus $1/\sqrt{\alpha_1}$, two of modulus $1/\sqrt{\alpha_2}$ and two of modulus $1/\sqrt{\alpha_2}$.
Remark that, after possibly replacing $f$ by $f\inv$ (which replaces $P$ by $P^\vee$, see \textsection \ref{section semisimple thm C}), we may assume that $\lambda_1$ is not cubic without real conjugates.\\
This realizes subcase $2$ of Proposition \ref{r=2 conjugates lambda_1} with $k=1$.
\end{ex}

\begin{ex}
Let $P(T)=T^6+T^5+4T^4+T^3+2T^2-2T+1$; then $G=\langle(1\, 2\, 5)(3\, 6\, 4), (1\, 3)(2\, 4)(5\, 6)\rangle$. The action of $G$ on $R_P$ has three orbits of cardinality $3$ and one of cardinality $6$. Then $\lambda_1$ and $\lambda_2$ are both cubic without real conjugates:
\begin{itemize}
\item if $r=1$, since it is easy to prove that $\lambda_1\neq \lambda_2$, this follows from Proposition \ref{r=1 conjugates lambda_1} ;
\item if $r=2$, it can be proven easily that $\alpha_1,\alpha_2$ and $\alpha_3$ are not all conjugate, and that all the other eigenvalues of $f^*_2$ are non-real. Therefore, the $\alpha_i$ are all contained in (distinct) orbits of cardinality $3$, meaning that they are cubic without real conjugates.
\end{itemize}
However it is unclear whether $r=1$ or $r=2$; if one could prove that $r=2$, this would realize subcase $3$ of Proposition \ref{r=2 conjugates lambda_1}.
\end{ex}

\begin{ex}
Let $P(T)=T^6+T^4-2T^3+T^2-T+1$ and 
let $\beta_1, \bar \beta_1, \beta_2,\bar \beta_2, \beta_3,\bar\beta_3$ be the roots of $P$, with $|\beta_1|\geq |\beta_2|\geq |\beta_3|$; then one finds out that
$$|\beta_1|=|\beta_2|^{-2}=|\beta_3|^{-2}.$$
In particular $\lambda_1=\lambda_2^2$, so that $r=1$. By Proposition \ref{r=1 conjugates lambda_1}, $\lambda_1$ and $\lambda_2$ are both cubic without real conjugates.\\
This realizes subcase $1$ of Proposition \ref{r=1 conjugates lambda_1}.
\end{ex}

\begin{rem}
In order to realize subcase $2$ of Proposition \ref{r=1 conjugates lambda_1} it would suffice to exhibit an irreducible polynomial $P\in \Z[T]$ of degree $6$, without real roots, such that $P(0)=1$ and having exactly two roots with modulus equal to $1$.
\end{rem}

\section{Automorphisms of threefolds: the mixed case}

In this last section, we deal with the case of an automorphism $f\colon X\to X$ of a compact K\"ahler threefold $X$ such that the action in cohomology $f^*_2 \colon H^2(X,\R)\to H^2(X,\R)$ is neither (virtually) unipotent nor semisimple. As we saw in Section \ref{sec:surface}, this situation is not possible in the surface case; in the threefold case, we manage to give some constraints but not to completely exclude this situation. However, due to restriction on the dimension, no examples can be produced on complex tori, and to the best of my knowledge no examples are known at all.

\begin{conj}
Let $f\colon X\to X$ be an automorphism of a compact K\"ahler threefold. Then $f^*_2\colon H^2(X,\R)\to H^2(X,\R)$ is either semisimple or virtually unipotent.
\end{conj}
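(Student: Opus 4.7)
The plan is to argue by contradiction. Assume $f^*_2$ is neither semisimple nor virtually unipotent, and write $f^*=g_u g_s=g_s g_u$ for its Jordan decomposition; by Lemma~\ref{unipotent semisimple algebraic} both $g_u$ and $g_s$ preserve properties $(1)$--$(3)$ of \textsection\ref{first constraints}, and they commute, so $g_u$ preserves every generalized eigenspace of $g_s$ in $H^2(X,\C)$. Under the assumption there exists an eigenvalue $\mu$ of $(g_s)_2$ on whose eigenspace $g_u$ acts non-trivially, hence a Jordan chain $v_1,\dots,v_m$ of maximal length $m\geq 2$ for $f^*_2$ at $\mu$, with $f^* v_i=\mu v_i+v_{i-1}$ and $v_0=0$. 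By Remark~\ref{Jordan form Hodge} we may take $v_1,\ldots,v_m\in H^{1,1}(X)\cup(H^{2,0}(X)\oplus H^{0,2}(X))$.

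The core of the plan is to transport the failure of semisimplicity from $\mu$ to an eigenvalue of modulus $|\mu|^{-2}$, and then iterate. Mimicking the proof of Theorem~\ref{thm : unipotent case} but keeping track of the scalar $\mu$, one computes
\[
(f^*)^n(v_m\wedge \bar v_m)=|\mu|^{2n}\sum_{i,j=1}^{m}\mu^{i-m}\bar\mu^{j-m}\,P_{m-i}(n)P_{m-j}(n)\,v_i\wedge\bar v_j,
\]
with $P_k$ the polynomials of Lemma~\ref{iteration unipotent}. Corollary~\ref{non-null wedge}, applied in sequence to the pairs $(v_1,v_2)$, $(v_1,v_3)$ and $(v_2,v_2)$ exactly in the style of the proof of Theorem~\ref{thm : unipotent case}, guarantees that some $v_i\wedge\bar v_j$ with $i,j\geq 2$ is non-zero, which exhibits a Jordan block of $f^*_4$ at eigenvalue $|\mu|^2$ of size $\geq 2$. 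Via Poincar\'e duality (property $(3)$) this becomes a Jordan block of $f^*_2$ of size $\geq 2$ at some eigenvalue $\nu$ with $|\nu|=|\mu|^{-2}$.

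Iterating this construction would produce Jordan chains of length $\geq 2$ for $f^*_2$ over eigenvalues of successive moduli $|\mu|,|\mu|^{-2},|\mu|^{4},|\mu|^{-8},\dots$. If $|\mu|\neq 1$ these values are pairwise distinct (the map $x\mapsto x^{-2}$ on $\R_{>0}$ has $1$ as its only periodic point), so by the finite-dimensionality of $H^2(X,\R)$ we get a contradiction. Thus every non-trivial Jordan chain of $f^*_2$ must sit over a $g_s$-eigenvalue of modulus $1$.

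The hard part will be ruling out this residual case. A natural attack is to wedge a Jordan chain $v_1,\dots,v_m$ at some $\mu$ of modulus $1$ with an eigenvector $e_+$ of $f^*_2$ for the dynamical degree $\lambda_1=\lambda_1(f)>1$: by Propositions~\ref{r=2 global} and~\ref{r=1 global} the eigenvalue $\lambda_1$ is simple, and Corollary~\ref{non-null wedge} combined with the constraints of Theorem~\ref{thm:moduli eigenvalues} (which in particular prevents $\lambda_1^2,\lambda_1\mu$ from coinciding with extremal eigenvalues in most configurations) should force $e_+\wedge v_1\neq 0$; the analogous computation
\[
(f^*)^n(e_+\wedge v_m)=\lambda_1^n\mu^n\sum_{j=1}^{m}\mu^{j-m}P_{m-j}(n)\,e_+\wedge v_j
\]
would then furnish a Jordan chain of $f^*_4$ of length $m$ at eigenvalue $\lambda_1\mu$, and by Poincar\'e duality a Jordan chain of $f^*_2$ at an eigenvalue of modulus $\lambda_1^{-1}\neq 1$, re-entering the iteration above and yielding the desired contradiction. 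The chief obstacle is that the wedge products $e_+\wedge e_+,\,e_+\wedge v_j,\,v_i\wedge v_j$ could vanish in delicate combinations compatible with Corollary~\ref{non-null wedge}, especially when $\mu$ is a non-trivial root of unity; organizing all these cases uniformly, without appealing to the K\"ahler cone (which is preserved by $f^*$ but not by $g_u$ or $g_s$ separately), appears to require geometric input genuinely beyond the algebraic properties $(1)$--$(4)$, which is presumably why the conjecture remains open.
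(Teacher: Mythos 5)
The statement you are addressing is labeled a \emph{conjecture} in the paper, and the author does not prove it: what the paper establishes is Proposition~\ref{mixed case}, a set of constraints on the hypothetical mixed case, not the impossibility of that case. Your overall strategy --- transporting a Jordan chain at an eigenvalue $\mu$ into a Jordan chain at modulus $|\mu|^{-2}$ by wedging the chain against its conjugate, applying Corollary~\ref{non-null wedge} and Poincar\'e duality, and iterating on the finitely many moduli --- closely parallels the paper's proof of Proposition~\ref{mixed case}, so the method is well suited to reproduce that \emph{partial} result, but it cannot reach the conjecture itself.

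There is also a concrete gap in your transport step, and the residual obstruction is not where you place it. For a Jordan chain of minimal non-trivial length $m=2$, Corollary~\ref{non-null wedge} alone does not deliver a Jordan block of size $\geq 2$ at modulus $|\mu|^{-2}$: if $v_1\wedge\bar v_1=0$ and $v_1\wedge\bar v_2=0$ (which forces $v_2\wedge\bar v_1=\overline{v_1\wedge\bar v_2}=0$ as well), then Corollary~\ref{non-null wedge} only yields $v_2\wedge\bar v_2\neq 0$, and one computes
$$(f^*)^n(v_2\wedge\bar v_2)=|\mu|^{2n}\,v_2\wedge\bar v_2,$$
with no polynomial growth, hence no non-trivial Jordan block in $H^4$ above the eigenvalue $|\mu|^{2}$. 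The paper sidesteps this by invoking the finer weight analysis of Propositions~\ref{r=2 global}.(3) and~\ref{r=1 global}.(2), which show that $v\wedge\bar v\neq 0$ for any eigenvector $v$ of $f^*_2$ whose eigenvalue has weight outside $\{0,w_1,w_2,w_3\}$; your argument needs that input, which is not a consequence of the Hodge index corollary alone, to make the induction run. Secondly, you identify the problematic residual case as $|\mu|=1$, but the paper's proof of Proposition~\ref{mixed case} actually \emph{excludes} non-trivial Jordan blocks over weight~$0$ (by wedging the top chain vector against an eigenvector of weight $w_2$: either the resulting growth contradicts the simplicity of the weight $w_1$, or one obtains a strictly larger block over $|\lambda|^{-2}=1$, contradicting the assumed maximality). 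The obstruction that actually keeps the conjecture open is at the opposite extreme: the chain of increasing Jordan block sizes must terminate, and Lemma~\ref{jordan lambda_1} shows it can only terminate at $\lambda_1$ (forcing $\lambda_2=\lambda_1^2$, case (2) of Proposition~\ref{mixed case}, or the mirror case (3)); there the algebraic properties $(1)$--$(4)$ cap the block size at $3$ but produce no contradiction, which is exactly why the statement remains a conjecture.
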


\begin{prop}
\label{mixed case}
Let $X$ be a compact K\"ahler threefold and let $f\colon X\to X$ be an automorphism such that $\lambda_1(f)>1$ and $f^*_2$ is not semisimple. Then
\begin{enumerate}
\item $\lambda_2(f)\in \{\sqrt{\lambda_1(f)}, \lambda_1(f)^2\}$; in particular, $r(f)=1$ and $\lambda_1=\lambda_1(f)$ and $\lambda_2=\lambda_2(f)$ are both cubic without real conjugates;
\item if $\lambda_2=\lambda_1^2$, then the eigenvalue $\lambda_1$ has a unique non-trivial Jordan block whose dimension is $m\leq 3$; the other eigenvalues having non-trivial Jordan blocks have modulus $1/\sqrt{\lambda_1}$, and their non-trivial Jordan blocks have dimension at most $m-1$;
\item analogously, if $\lambda_2=\sqrt{\lambda_1}$, then the eigenvalue $\lambda_2\inv$ has a unique non-trivial Jordan block whose dimension is $m\leq 3$; the other eigenvalues having non-trivial Jordan blocks have modulus $\sqrt{\lambda_2}$, and their non-trivial Jordan blocks have dimension at most $m-1$.
\end{enumerate}
\end{prop}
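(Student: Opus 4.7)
The plan is as follows. Apply the Jordan decomposition $f^*=f^*_u\circ f^*_s$ provided by Lemma \ref{unipotent semisimple algebraic}. Since $f^*_2$ is not semisimple by hypothesis, $(f^*_u)_2$ is a non-trivial unipotent operator satisfying the hypotheses of Theorem \ref{thm : unipotent case}; in particular every Jordan block of $f^*_2$ has dimension at most $5$. At the same time, $f^*_s$ satisfies the hypotheses of Sections \ref{section semisimple thm B}--\ref{section semisimple thm C}, so its spectrum is controlled by the weights and the rank $r(f^*_s)$ is at most $2$.

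The core step would be a wedge computation. Let $\lambda$ be an eigenvalue of $f^*_2$ admitting a Jordan block of dimension $M\geq 2$, and let $u_1,\ldots,u_M$ be a Jordan chain with $u_1$ in a single Hodge bidegree (Remark \ref{Jordan form Hodge}). Writing
\[
(f^*)^n(u_M)=\sum_{j=0}^{M-1}\binom{n}{j}\lambda^{n-j}\,u_{M-j},
\]
the compatibility of $f^*$ with the wedge product yields
\[
(f^*)^n(u_M\wedge\bar u_M)=\binom{n}{M-1}^{2}|\lambda|^{2(n-M+1)}\,u_1\wedge\bar u_1+O\!\bigl(n^{2M-3}\bigr).
\]
Corollary \ref{non-null wedge} guarantees $u_1\wedge\bar u_1\neq 0$ whenever $u_1\in H^{2,0}(X)\oplus H^{0,2}(X)$; in the remaining case $u_1\in H^{1,1}(X)$, an auxiliary eigenvector $v$ of $f^*_s$ is furnished by Corollary \ref{non-null wedge} so that $u_1\wedge\bar v\neq 0$, and one performs the same computation on $u_M\wedge\bar v_L$, where $v_1,\ldots,v_L$ is a Jordan chain extending $v$. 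Either way one concludes, using the identification $f^*_4=((f^*_2)\inv)^\vee$ coming from Poincaré duality, that $|\lambda|^{-2}$ is an eigenvalue of $f^*_2$ carrying a Jordan block of dimension at least $2M-1$.

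Combined with the bound $5$ of Theorem \ref{thm : unipotent case} this already yields $M\leq 3$, proving the dimension estimates in (2) and (3). Iterating the wedge argument to the new block at $|\lambda|^{-2}$ would produce Jordan blocks at moduli $|\lambda|^{(-2)^k}$ of dimensions $2^k(M-1)+1$; since these must remain $\leq 5$, the iteration terminates after a bounded number of steps. Tracing where it must stop places a Jordan block at an eigenvalue whose modulus is $\lambda_1$ or $\lambda_2\inv$, and a further application of the wedge argument forces either $\lambda_1^{-2}$ or $\lambda_2^{-2}$ to be an eigenvalue modulus of $f^*_2$. Combined with the log-concavity $\sqrt{\lambda_1}\leq\lambda_2\leq\lambda_1^{2}$ (Proposition \ref{log-concavity}), this gives $\lambda_2\in\{\sqrt{\lambda_1},\lambda_1^{2}\}$; hence $r(f^*_s)=1$ by Corollary \ref{rank resonance}, and the cubicity of $\lambda_1,\lambda_2$ without real conjugates follows from Proposition \ref{r=1 conjugates lambda_1}. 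The precise localization of the non-maximal Jordan blocks at modulus $1/\sqrt{\lambda_1}$ (respectively $\sqrt{\lambda_2}$) with dimension $\leq M-1$ is then obtained by a pairwise wedge computation modeled on the proof of Proposition \ref{non-maximal Jordan blocks}.

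The main obstacle lies in the case $u_1\in H^{1,1}(X)$ with $u_1\wedge u_1=0$: one has to pick an auxiliary eigenvector of $f^*_s$ with a compatible Hodge bidegree and verify that the resulting generalized eigenvector of $f^*_4$ truly belongs to a Jordan block of the expected size, rather than being killed by a lower-order cancellation. A secondary subtlety is excluding Jordan blocks at intermediate moduli $|\lambda|\notin\{\lambda_1,\lambda_2\inv,\lambda_1\inv\lambda_2\}$, which requires invoking the weight constraints of Proposition \ref{r=1 global} to force the iteration to close onto one of the three distinguished moduli.
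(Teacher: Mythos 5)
Your overall blueprint — wedge computations on Jordan chains, Poincar\'e duality to transfer growth from $g_4$ to $g_2$, iteration of the doubling of moduli, and the weight constraints of Proposition \ref{r=1 global} to close the loop — is the right one and is indeed the spine of the paper's proof. However, the central quantitative claim in your second step is not valid in the key case, and this propagates into the conclusion. You assert that a Jordan block of dimension $M$ at $\lambda$ always forces a block of dimension at least $2M-1$ at $|\lambda|^{-2}$, and you dismiss the case $u_1\in H^{1,1}(X)$, $u_1\wedge\bar u_1=0$, by pairing with an auxiliary eigenvector $v$ and its Jordan chain $v_1,\ldots,v_L$. But the resulting leading term of $(f^*)^n(u_M\wedge\bar v_L)$ has degree $M+L-2$ and eigenvalue $\lambda\bar\mu$, not $2M-2$ and $|\lambda|^2$; unless $L\geq M$ (which you cannot guarantee — $v$ may well be a simple eigenvector), this is strictly weaker than $2M-1$, and it produces a Jordan block at a different modulus. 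For the extremal eigenvalue $\lambda_1$ this is not an edge case but the main case: $u_1\wedge\bar u_1\neq 0$ would force $\lambda_1^{2}$ to be an eigenvalue modulus of $g_4$, hence $\lambda_2\geq\lambda_1^2$, so the nonvanishing can only occur in the resonant situation $\lambda_2=\lambda_1^2$. Consequently the deduction ``combined with the bound $5$ this already yields $M\leq 3$'' is not established for the eigenvalue that actually matters.

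The paper gets around this precisely by isolating a separate lemma (Lemma \ref{jordan lambda_1}) for the extremal eigenvalue, where the wedge argument is the refined one from Theorem \ref{thm : unipotent case} — using $u_k\wedge u_k$ and $u_{k-1}\wedge u_{k-1}$ together, exploiting that $u_1\wedge u_1$, $u_1\wedge u_2$, $u_2\wedge u_2$ cannot all vanish — giving the bound $2k-6$ rather than $2k-1$. That weaker bound, combined with applying the same lemma to $f^{-1}$, is exactly what is needed to rule out dimension $\geq 4$ at $\lambda_1$ and to establish that a non-trivial block at $\lambda_1$ forces $\lambda_2=\lambda_1^2$. The non-resonant case ($\lambda_2\neq\lambda_1^2$) is then killed at once because $w_1$ is a simple weight (Propositions \ref{r=2 global} and \ref{r=1 global}), with no wedge computation at all. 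Your last two paragraphs flag the right obstacle, but a proof along these lines must actually perform the weight-by-weight case analysis that the paper carries out (distinguishing $w\in\{0,w_1,w_2,w_3\}$ from intermediate weights) and cannot substitute a uniform $2M-1$ for the refined estimate at the extremal weights. As written, the proposal also does not establish the uniqueness of the block at $\lambda_1$, nor the precise localization of the remaining blocks at modulus $1/\sqrt{\lambda_1}$ with dimension $\leq m-1$, beyond a brief gesture at Proposition \ref{non-maximal Jordan blocks}.
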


In what follows denote by $g=f^*\colon H^*(X,\R)\to H^*(X,\R)$ the linear automorphism induced by $f$, and by $g_i$ the restriction of $g$ to $H^i(X,\R)$. We will denote by $\lambda_i=\lambda_i(f)$ the dynamical degrees and we will assume that 
$$\lambda_2\geq \lambda_1>1;$$
the case $\lambda_1\geq \lambda_2$ follows from the previous one by replacing $f$ by $f\inv$.

\begin{lemma}
\label{jordan lambda_1}
If $\lambda_2\neq\lambda_1^2$, then $\lambda_1$ has no non-trivial Jordan block for $g_2$. If $\lambda_2=\lambda_1^2$, then $g_2$ has at most one non-trivial Jordan block for the eigenvalue $\lambda_1$, whose dimension is at most $3$. In either case, $g_2$ does not have non-trivial Jordan blocks for other eigenvalues of modulus $\lambda_1$.
\end{lemma}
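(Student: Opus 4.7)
The plan is to iterate the dominant-term wedge analysis from the proof of Theorem \ref{thm : unipotent case}, combining it with Poincar\'e duality (property (3) of \textsection \ref{first constraints}) to transport Jordan structure between $g_2$ and $g_4$, and to use Corollary \ref{non-null wedge} to ensure that the leading terms of the relevant wedge products are non-vanishing.

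First I would suppose that $\lambda$ is any eigenvalue of $g_2$ with $|\lambda|=\lambda_1$ admitting a non-trivial Jordan block of dimension $k\ge 2$, and pick a Jordan basis $u_1,\ldots,u_k$ in pure Hodge type (Remark \ref{Jordan form Hodge}), so that $g u_1 = \lambda u_1$ and $g u_{i+1} = \lambda u_{i+1} + u_i$. Expanding
\[
g^n(u_i\wedge \bar u_j) = \sum_{a,b} \binom{n}{a}\binom{n}{b}\lambda^{n-a}\bar\lambda^{n-b}\, u_{i-a}\wedge \bar u_{j-b}
\]
binomially, the term of highest degree in $n$ is a nonzero scalar multiple of $n^{i+j-2}\lambda_1^{2n}\,u_1\wedge \bar u_1$. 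By Corollary \ref{non-null wedge}, at least one among $u_1\wedge \bar u_1,\, u_1\wedge \bar u_2,\, u_2\wedge \bar u_2$ (and their symmetric variants) is non-null, so the orbit of some class in $H^4(X,\C)$ grows at rate $n^s\lambda_1^{2n}$ with $s\ge 0$. Hence $g_4$ has an eigenvalue of modulus $\lambda_1^2$, giving $\lambda_2\ge \lambda_1^2$, and by log-concavity (Proposition \ref{log-concavity}) we obtain $\lambda_2=\lambda_1^2$. This yields the first claim and, since the argument is insensitive to the choice of $\lambda$ of modulus $\lambda_1$, also the last claim in the case $\lambda_2\neq\lambda_1^2$.

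Now assume $\lambda_2=\lambda_1^2$. The analysis above produces a Jordan block of $g_4$ at an eigenvalue of modulus $\lambda_1^2=\lambda_2$ of dimension $\ge 2k-c$, for a small constant $c$ determined by which low-degree wedges $u_a\wedge \bar u_b$ ($a,b\le 2$) vanish. By Poincar\'e duality, $g_2$ has a Jordan block at an eigenvalue of modulus $\lambda_1^{-2}$ of the same dimension $m\ge 2k-c$; fix a Jordan basis $v_1,\ldots,v_m$ of pure Hodge type. Repeating the dominant-term analysis on $g^n(u_a\wedge \bar v_b)$ and applying Corollary \ref{non-null wedge} to the pair $\{u_1,v_1\}$ produces a Jordan block of $g_4$ at an eigenvalue of modulus $\lambda_1\cdot\lambda_1^{-2}=\lambda_1^{-1}$ of dimension $\ge k+m-c'$; Poincar\'e duality translates this into a Jordan block of $g_2$ at an eigenvalue of modulus $\lambda_1$ of dimension $\ge k+m-c'$. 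Since $k$ is the maximal size of such a block, one has $m\le c'$, and combining with $m\ge 2k-c$ yields $k\le 3$. Uniqueness is obtained by running the same double-wedge argument on two distinct non-trivial Jordan blocks of sizes $k,k'\ge 2$ for $\lambda_1$: the cross-wedge analysis produces a Jordan block of $g_2$ for $\lambda_1$ of dimension exceeding $\max(k,k')$, contradicting maximality. The same cross-wedge argument against the block for $\lambda_1$ rules out non-trivial blocks for any other eigenvalue of modulus $\lambda_1$.

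The main obstacle is the fine bookkeeping of the constants $c,c'$: they depend on the vanishing pattern of the low-degree wedge products, and verifying that Corollary \ref{non-null wedge} forces enough non-vanishing combinations to yield the sharp bound $k\le 3$ (rather than, say, $k\le 5$) requires a careful case enumeration across the possible Hodge types of the $u_i$ and $v_j$.
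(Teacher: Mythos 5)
Your proposal is in the right spirit and uses the same three ingredients as the paper (wedge analysis of Jordan chains, Corollary \ref{non-null wedge}, and Poincar\'e duality transporting Jordan structure from $g_4$ to $g_2$), but there are a couple of meaningful divergences, one of which leaves a genuine gap.

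For the case $\lambda_2\neq\lambda_1^2$ your argument actually differs from the paper's: you show directly that a non-trivial Jordan block for any eigenvalue of modulus $\lambda_1$ forces $\lambda_2\geq\lambda_1^2$, and then invoke log-concavity (Proposition \ref{log-concavity}) to upgrade this to $\lambda_2=\lambda_1^2$, a contradiction. That is correct and self-contained. The paper instead quotes Theorem \ref{thm:moduli eigenvalues} (Propositions \ref{r=2 global} and \ref{r=1 global}) to deduce that $\lambda_1$ is a \emph{simple} eigenvalue of $g_2$ whenever $\lambda_2\notin\{\sqrt{\lambda_1},\lambda_1^2\}$, so that no non-trivial block is even possible. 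Both routes work; yours is more elementary but a bit longer, while the paper's reuses structural facts already established.

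For the case $\lambda_2=\lambda_1^2$ the gap you flag yourself is real, and I would add that the iterative ``$m\geq 2k-c$, then $m\leq c'$'' scheme is more complicated than necessary and risks being unsharp. The paper gets the bound $k\leq 3$ in a single step by exploiting an arithmetic incompatibility: a Jordan block of dimension $\geq 4$ for $\lambda_1$ produces (via the $g^n(u_4\wedge u_4)$, $g^n(u_3\wedge u_3)$ computation, exactly as in Theorem \ref{thm : unipotent case}) a non-trivial Jordan block of $g_4$ at the eigenvalue $\lambda_2=\lambda_1^2$, hence a non-trivial Jordan block of $g_2$ at $\lambda_2^{-1}$; applying the first part of the lemma to $g^{-1}$ (for which $\lambda_1(g^{-1})=\lambda_2$, $\lambda_2(g^{-1})=\lambda_1$) forces $\lambda_1=\lambda_2^2$, which together with $\lambda_2=\lambda_1^2$ gives $\lambda_1=1$, a contradiction. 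In other words, you do not need the second cross-wedge iteration at all: a single wedge analysis already lands you on $\lambda_2^{-1}$, and \emph{any} non-trivial block there is already fatal. The same one-shot mechanism disposes of uniqueness (pair $u_2,v_2$ from two blocks and wedge) and of other eigenvalues of the same modulus. If you want to push your iterative version through, you would have to pin down that in the first wedge step $c=5$ (the worst case in the $u_k\wedge u_k$, $u_{k-1}\wedge u_{k-1}$ bookkeeping), that in the cross-wedge step $u_1\wedge\bar v_1\neq0$ is forced because both $u_1\wedge\bar u_1\neq0$ and $v_1\wedge\bar v_1\neq0$ lead separately to contradictions (the first via the mechanism above, the second because it would give $g_4$ an eigenvalue of modulus $\lambda_1^4>\lambda_2$), so $c'=1$; and that the eigenvalue hit by the cross-wedge, while of modulus $\lambda_1$, need not equal $\lambda$, so $k$ must be taken as the maximum over all eigenvalues of modulus $\lambda_1$ from the outset. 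These are all fixable, but they are precisely the careful case enumeration you acknowledge leaving out, and the paper's shortcut avoids them entirely.
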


\begin{proof}
Suppose first that $\lambda_2\neq \lambda_1^2$; then, by Theorem \ref{thm:moduli eigenvalues}, $w_1$ is a simple weight of (the semisimple part of) $g$. Therefore in particular $\lambda_1$ has no non-trivial Jordan block.

Now suppose that $\lambda_2=\lambda_1^2$. We prove first that the Jordan blocks for the eigenvalue $\lambda_1$ have dimension at most $3$. Suppose by contradiction that there exists a Jordan block of dimension at least $4$; then, as in the proof of Theorem \ref{thm : unipotent case}, we may pick $u_1,u_2,u_3,u_4\in H^{1,1}(X,\R)\cup (H^{2,0}(X)\oplus H^{0,2}(X))_\R$ such that
$$g(u_1)=\lambda_1 u_1, \qquad g(u_i)=u_{i+1} + \lambda_1 u_i \quad i=1,2,3.$$
Considering
$$g^n(u_4 \wedge u_4), g^n(u_3\wedge u_3)\in H^4(X,\R),$$
and applying Lemma \ref{key lemma} as in the proof of Theorem \ref{thm : unipotent case}, we obtain a class $v\in H^4(X,\R)$ such that
$$\norm{g_4^n v}\sim cn^k\lambda_1^{2n}=cn^k \lambda_2^n \qquad \text{for some }k\geq 1.$$
This means that the eigenvalue $\lambda_2$ has a non-trivial Jordan block for $g_4$, and since $g_4=(g_2\inv)^\vee$, the eigenvalue $\lambda_2\inv$ has a non-trivial Jordan block for $g_2$. Applying the first part of the claim to $g\inv$, we obtain that $\lambda_1=\lambda_2^2$, contradicting the assumption that $\lambda_2=\lambda_1^2$.\\
This proves that Jordan blocks of $g_2$ for the eigenvalue $\lambda_1$ have dimension at most $3$.

Now let us prove that there exists a unique non-trivial Jordan block of $g_2$ for the eigenvalue $\lambda_1$. Suppose by contradiction that we can find linearly independent elements $u_1,u_2,v_1,v_2\in H^{1,1}(X,\R)\cup (H^{2,0}(X)\oplus H^{0,2}(X))_\R$ such that
$$g(u_1)=\lambda_1 u_1, \quad g(u_2)=u_1 + \lambda_1 u_2, \qquad g(v_1)=\lambda_1 v_1, \quad g(v_2)=v_1 + \lambda_1 v_2.$$
Then, considering
$$g^n(u_2 \wedge u_2), \quad g^n(u_2\wedge v_2), \quad g^n(v_2 \wedge v_2)$$
and applying Lemma \ref{key lemma} to the classes $u_1$ and $v_1$, we get as before a class $v\in H^4(X,\R)$ such that
$$\norm{g_4^n v}\sim cn^k\lambda_1^{2n}=cn^k \lambda_2^n \qquad \text{for some }k\geq 1,$$
which yields a contradiction as above. This concludes the proof.
\end{proof}

\begin{proof}[Proof of Proposition \ref{mixed case}]
Let $\lambda\in \Lambda$ be an eigenvalue of $g_2$ with weight $w$ such that $g_2$ has a non-trivial Jordan block for $\lambda$ of dimension $k>1$. As in the proof of Theorem \ref{thm : unipotent case}, we can take a Jordan basis $u_1,\ldots , u_k\in  H^{1,1}(X)\cup (H^{2,0}(X)\oplus H^{0,2}(X))$ such that
$$g(u_1)=\lambda u_1, \qquad g(u_{i+1})=u_i + \lambda u_{i+1} \quad i=1,\ldots ,k-1.$$
Suppose that $\lambda_2\geq \lambda_1$, so that by Lemma \ref{jordan lambda_1} applied to $f\inv$ the eigenvalue $\lambda_2$ has no non-trivial Jordan block. Let as usual $w_1,w_2,w_3$ the weights of the eigenvalues $\alpha_1=\lambda_1,\alpha_2=\lambda_2\inv,\alpha_3=\lambda_1\inv\lambda_2\in \Lambda$.

We distinguish the following cases:
\begin{itemize}
\item $w\notin \{0,w_1,w_2,w_3\}$:  by Propositions \ref{r=2 global}.(3) and \ref{r=1 global}.(2) we have $u_1\wedge \bar u_1\neq 0$. In particular,
$$g^n(u_k\wedge \bar u^k) \sim c n^{2k-2} |\lambda|^{2n} (u_1\wedge \bar u_1),$$
which means that $g_4$ has a Jordan block of dimension $\geq 2k-1$ for the eigenvalue $|\lambda|^2$. Since $g_4=(g_2\inv)^\vee$, $g_2$ has a Jordan block of dimension $\geq 2k-1>k$ for the eigenvalue $|\lambda|^{-2}\in \Lambda$;
\item $w=0$; take $\lambda\in \Lambda$ with weight $0$ such that the dimension $k$ of its maximal Jordan block is maximal, and let $v\in H^{1,1}(X,\R)\cup (H^{2,0}(X)\oplus H^{0,2}(X))_\R$ be an eigenvector for the eigenvalue $\lambda_2$. \\
Since $v\wedge v=0$, by Lemma \ref{key lemma} we have either $u_1\wedge v\neq 0$ or $u_1 \wedge \bar u_1\neq 0$. In the first case, considering $g^n(u_k \wedge v)$ we obtain a non-trivial Jordan block for an eigenvalue $\lambda'$ of weight $-w_2$; this implies that $w_1=-w_2$, and by Proposition \ref{r=1 global} the weight $w_1$ is simple, contradicting the existence of a non-trivial Jordan block. In the second case, considering $g^n(u_k\wedge \bar u_k)$ we obtain a Jordan block of dimension $\geq 2k-1>k$ for the eigenvalue $|\lambda|^{-2}=1$; since $1$ has weight $0$, this contradicts maximality. Therefore, this case cannot occur;
\item $w=w_1$: in this case, by Lemma \ref{jordan lambda_1} we have $\lambda_2=\lambda_1^2$, $\lambda=\lambda_1$ and $k\leq 3$;
\item $w=w_2$: by Lemma \ref{jordan lambda_1} applied to $f\inv$ this case cannot occur;
\item $w=w_3$: let $v\in H^{1,1}(X,\R)\cup (H^{2,0}(X)\oplus H^{0,2}(X))_\R$ be an eigenvector for the eigenvalue $\lambda_2\inv$. Since $v\wedge v= 0$, by Lemma \ref{key lemma} we have either $v\wedge \bar u_1 \neq 0$ or $u_1\wedge \bar u_1\neq 0$. \\
In the first case we obtain a non-trivial Jordan block for an eigenvalue of weight $w_1$; by Lemma \ref{jordan lambda_1} we have then $\lambda_2=\lambda_1^2$, thus $w_3=w_1$ and, again by Lemma \ref{jordan lambda_1}, $\lambda=\lambda_1$.\\
In the second case, we get a non-trivial Jordan block for the eigenvalue $|\lambda|^{-2}$ with dimension $>k$.
\end{itemize}
The above computation show that, if $g_2$ has a non-trivial Jordan block of dimension $k$ for the eigenvalue $\lambda\in \Lambda$, then 
\begin{itemize}
\item either $\lambda= \lambda_1$, in which case $\lambda_2=\lambda_1^2$; 
\item or $|\lambda|\neq 1$ and there is a Jordan block of dimension $>k$ for the eigenvalue $|\lambda|^{-2}$.
\end{itemize}
One proves inductively that $g_2$ admits a non-trivial Jordan block for the eigenvalue $\lambda_1$. By Lemma \ref{jordan lambda_1}, such block has dimension at most $3$; the claim follows from the fact that, as we proved above, the dimension of a non-trivial Jordan block of $\lambda\neq \lambda_1$ is strictly smaller than that of a non-trivial Jordan block of $|\lambda|^{-2}$.
\end{proof}

\bibliography{references}{}
\bibliographystyle{alpha}
\end{document}